\newtheorem{theorem}{Theorem}[section]
\newtheorem{proposition}[theorem]{Proposition}
\newtheorem{lemma}[theorem]{Lemma}
\newtheorem{corollary}[theorem]{Corollary}
\newtheorem{proexample}[theorem]{Example}
\renewcommand{\L}{{\mathcal L}}                                 %Spaces of bounded operators
\newcommand{\I}{{\mathcal I}}                                   %An interleaving map
\newcommand{\M}{{\mathcal M}}
\newcommand{\Real}{\mathbb R}                                   %Reals
\newcommand{\Natural}{{\mathbb N}}                              %Natural Numbers
\newcommand{\Rational}{{\mathbb Q}}                              %Rationals
\newcommand{\1}{\boldsymbol{1}}
\def\ker{\mathop{\rm ker}\nolimits}
\def\card{\mathop{\rm card}\nolimits}
\newcommand{\carda}{\textswab{a}}
\newcommand{\cardb}{\textswab{b}}
\newcommand{\cardc}{\textswab{c}}
\begin{document}
\title{Free and Projective Banach Lattices.}
\author{B. de Pagter and A.W. Wickstead}
\classno{46B42}

 \maketitle
\begin{abstract}
We define and prove the existence of  free Banach lattices in the category of Banach lattices and contractive lattice homomorphisms and establish some of their fundamental properties. We give much more detailed results about their structure in the case that there are only a finite number of generators and give several Banach lattice characterizations of the number of generators being, respectively, one, finite or countable. We define a Banach lattice $P$ to be \emph{projective} if whenever $X$ is a Banach lattice, $J$ a closed ideal in $X$, $Q:X\to X/J$ the quotient map, $T:P\to X/J$ a linear lattice homomorphism and $\epsilon>0$ there is a linear lattice homomorphism $\hat{T}:P\to X$ such that (i) $T=Q\circ \hat{T}$ and (ii) $\|\hat{T}\|\le (1+\epsilon)\|T\|$.  We establish the connection between projective Banach lattices and free Banach lattices and describe several families of Banach lattices that are projective as well as proving that some are not.
\end{abstract}

\section{Introduction.}

Free and projective objects have not played anywhere near as
important a r\^{o}le in analysis as in algebra, nevertheless there
has been some work done on these objects, mainly with the results
that one would expect. For example, the existence of free and
projective Banach spaces is virtually folklore but is uninteresting
as both are of the form $\ell_1(I)$ for an arbitrary index set $I$.
The
existence of free vector lattices over an arbitrary number of
generators is also long established and holds no real surprises, see
\cite{Ba} or \cite{Bl} for details. In this note we investigate free
and projective Banach lattices. Some of our results are rather surprising
and although we are able to answer many questions we are forced to leave several unanswered.

It is almost obvious that, if it exists, then the free Banach
lattice over $\carda$ generators must be the completion of the free
vector lattice over $\carda$ generators for some lattice norm. That
the required norm actually exists is easily proved, but describing
it in concrete and readily identifiable terms is not so easy.
Indeed, except in the case $\carda=1$, it is not a classical Banach
lattice norm at all. In fact it is only in the case that $\carda$ is finite that the free Banach lattice
over $\carda$ generators is even isomorphic to an AM-space.

\S2 is primarily devoted to establishing notation whilst \S3
recapitulates the existing theory of free vector lattices. We then
prove the existence of free Banach lattices in \S4 and give a
representation on a compact Hausdorff space in \S5. We establish
some of the basic properties of free Banach lattices in \S6. The finitely generated free
 Banach lattices are by far the easiest ones to understand, and we
 investigate their structure in \S7. In \S8
we give some characterizations of free Banach lattices over, respectively, one, a finite
number or a countable number of generators,
 amongst all free Banach lattices. Preparatory to looking at
 projective Banach lattices, in \S9 we investigate when disjoint families
 in quotient Banach lattices $X/J$ can be lifted to disjoint families
 in $X$, giving a  positive result for countable families and a negative
 result for larger ones. We prove the connection between free and
 projective Banach lattices in \S10 and in \S11 find some classes of
 Banach lattices that are, or are not, projective. Finally, \S12
 contains some open problems.

 Let us emphasize at this point that this paper is set in the category of Banach lattices and linear lattice homomorphisms. 
There is a substantial theory of \emph{injective} Banach lattices (and indeed we refer to them later) but this is set in the context of Banach lattices 
and positive (or regular) operators.\footnote{In fact, although we can find no explicit proof in the literature, there is no non-zero injective in the category
of Banach lattices and linear lattice homomorphisms. Indeed, suppose that $F$ were a non-zero injective. Let $\carda$ be strictly greater than the cardinality 
of $F^*$ and let $\mu$
be the product of $\carda$ many copies  of the measure which assigns mass $\frac12$ to each of 0 and 1 in $\{0,1\}$. This is a homogenous measure space and each order interval
 in $\L_1(\mu)$ has the property that the least cardinality of a dense subset is precisely $\carda$, see \cite{Se} \S26 for details.
 In particular every order interval has cardinality at least $\carda$. As $\mu$ is finite, 
the same is true of $\L_\infty(\mu)$. 
Pick any non-zero $y\in F_+$. As $F$ is alleged to be injective, there is a linear lattice homomorphism $T$
 extending the map that 
takes the constantly one function in  $\L_1(\mu)$, $\1$,  to $y$. The adjoint of this  maps $F^*$ into 
$\L_1(\mu)^*=\L_\infty(\mu)$ and is interval preserving, \cite{MN}, 
Theorem 1.4.19. In particular, if $f\in F_+^*$ with $f(y)>0$  then $T^*f(\1)=f(T\1)=f(y)>0$, so  
the image of the order interval $[0,f]$ 
will be a non-zero order interval in $\L_\infty(\mu)$ which has cardinality at least $\carda$.
This contradicts the fact that $[0,f]$ has cardinality strictly less than $\carda$.}
Thus there is no reason to expect any kind of duality between the two notions.

\section{Notation.}\label{notation}

In this short section we establish the notation that we will use
concerning functions and function spaces. If $A$ and $X$ are
non-empty sets then, as usual, $X^A$ denotes the set of all maps
from $A$ into $X$. If $\emptyset\ne B\subseteq A$ then we let
$r_B:X^A\to X^B$ denote the restriction map with $r_B\xi=\xi_{|B}$
for $\xi\in X^A$. Clearly, $r_B$ is surjective. On occasions we will
also write $\xi_B$ in place of $r_B(\xi)$.

The space of all real-valued functions on $X^A$, $\Real^{X^A}$, is a
vector lattice under the pointwise operations. Again, we consider
the setting where $B$ is a non-empty subset of $A$ and define
$j_B:\Real^{X^B}\to \Real^{X^A}$  by $(j_Bf)(\xi)=f(\xi_B)$ for
$\xi\in X^A$ and $f\in \Real^{X^B}$. This makes $j_B$ an injective
lattice homomorphism. The following description of the image of
$j_B$ is easily verified.

\begin{lemma}\label{trivlemma}If $A,B$ and $X$ are non-empty sets with $B\subseteq A$
and $f\in \Real^{X^A}$ then the following are equivalent:
\begin{enumerate}
\item $f\in j_B(\Real^{X^B})$.
\item If $\xi, \eta\in X^A$ with $\xi_B=\eta_B$ then
$f(\xi)=f(\eta)$.
\end{enumerate}
\end{lemma}

We now specialize somewhat by assuming that $X\subseteq \Real$ and
that $0\in X$. This means that if $\xi\in X^A$, $\emptyset\ne B\subseteq A$ and $\chi_B$ is the characteristic function of $B$ then the pointwise
product $\xi\chi_B\in X^A$.

\begin{lemma}\label{restprops}If $\emptyset\ne B\subseteq A$ and $0\in
X\subseteq \Real$ then the map $P_B:\Real^{X^A}\to
j_B\big(\Real^{X^A}\big)$ defined by
\[(P_Bf)(\xi)=f(\xi\chi_B)\qquad(\xi\in X^A,\ f\in \Real^{X^A})\]
is a linear lattice homomorphism and a projection onto
$j_B\big(\Real^{X^B}\big)$. Furthermore, if $B_1, B_2\subseteq A$
are non-empty sets with non-empty intersection then
$P_{B_1}P_{B_2}=P_{B_2}P_{B_1}=P_{B_1\cap B_2}$.
\begin{proof} It is clear that $P_B$ is a well-defined vector
lattice homomorphism of $X^A$ into itself. If $\xi,\eta\in X^A$ are
such that $\xi_B=\eta_B$ then
$(P_Bf)(\xi)=f(\xi\chi_B)=f(\eta\chi_B)=(P_Bf)(\eta)$ so by Lemma
\ref{trivlemma} $P_Bf\in j_B(\Real^{X^B})$ for all $f\in \Real^{X^A}$. If $f\in
\Real^{X^B}$  then for any $\xi\in X^A$ we
have $P_B(j_Bf)(\xi)=(j_Bf)(\xi \chi_B)=(j_Bf)(\xi)$ as $\xi$ and
$\xi\chi_B$ coincide on $B$ and using Lemma \ref{trivlemma} again.
Thus $P_B$ is indeed a projection.

Finally, if $f\in \Real^{X^A}$ and $\xi\in X^A$ then
\begin{align*} P_{B_1}P_{B_2}f(\xi)&=(P_{B_2})(f\chi_{B_1})=f(\xi
\chi_{B_1}\chi_{B_2})\\
&=f(\xi \chi_{B_1\cap B_2})= (P_{B_1\cap B_2}f)(\xi),
\end{align*}
which shows that $P_{B_1} P_{B_2}=P_{B_1\cap B_2}$. Similarly
$P_{B_2} P_{B_2}=P_{B_2\cap B_1}=P_{B_1\cap B_2}$ and the proof is
complete.
\end{proof}
\end{lemma}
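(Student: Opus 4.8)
The plan is to exploit the fact that $P_B$ is nothing more than a substitution operator: it precomposes each $f$ with the fixed self-map $\xi\mapsto\xi\chi_B$ of $X^A$. First I would check that this map really does land back in $X^A$: since $0\in X$, each coordinate of $\xi\chi_B$ is either the corresponding coordinate of $\xi$ (on $B$) or $0$ (off $B$), so $\xi\chi_B\in X^A$ and $P_B$ is well defined. Linearity and the lattice-homomorphism property are then automatic, because all the vector-lattice operations on $\Real^{X^A}$ are computed pointwise and precomposition commutes with pointwise evaluation; for example $(P_B(f\vee g))(\xi)=(f\vee g)(\xi\chi_B)=(P_Bf)(\xi)\vee(P_Bg)(\xi)$, and similarly for sums and scalar multiples.

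Next I would establish the two halves of the projection claim separately. To see that the range of $P_B$ lies inside $j_B(\Real^{X^B})$ I would invoke Lemma \ref{trivlemma}: if $\xi_B=\eta_B$ then, because $\chi_B$ annihilates every coordinate outside $B$, we get $\xi\chi_B=\eta\chi_B$ as elements of $X^A$, whence $(P_Bf)(\xi)=(P_Bf)(\eta)$, and the criterion of Lemma \ref{trivlemma} then places $P_Bf$ in $j_B(\Real^{X^B})$. To finish I would show $P_B$ acts as the identity on this image. Given $f\in\Real^{X^B}$, the restriction $(\xi\chi_B)_B$ equals $\xi_B$, so $(P_B(j_Bf))(\xi)=(j_Bf)(\xi\chi_B)=f\big((\xi\chi_B)_B\big)=f(\xi_B)=(j_Bf)(\xi)$. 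Combining the two observations yields $P_B^2=P_B$ with range exactly $j_B(\Real^{X^B})$, which is the asserted projection.

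Finally, the composition identity collapses to a single pointwise fact about characteristic functions, namely $\chi_{B_1}\chi_{B_2}=\chi_{B_1\cap B_2}$. Unwinding the definitions,
\[
(P_{B_1}P_{B_2}f)(\xi)=(P_{B_2}f)(\xi\chi_{B_1})=f\big(\xi\chi_{B_1}\chi_{B_2}\big)=f\big(\xi\chi_{B_1\cap B_2}\big)=(P_{B_1\cap B_2}f)(\xi).
\]
Since multiplication of characteristic functions is commutative, interchanging $B_1$ and $B_2$ produces the same right-hand side, giving $P_{B_1}P_{B_2}=P_{B_2}P_{B_1}=P_{B_1\cap B_2}$; the hypothesis that $B_1\cap B_2$ is non-empty is needed only so that $P_{B_1\cap B_2}$ is defined at all.

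I do not expect any genuine obstacle here, since the entire lemma is bookkeeping around the identity $\chi_{B_1}\chi_{B_2}=\chi_{B_1\cap B_2}$ together with the hypothesis $0\in X$ that keeps $\xi\chi_B$ inside $X^A$. The only point demanding a moment's care is the appeal to Lemma \ref{trivlemma} in the range computation, where one must verify that $\xi_B=\eta_B$ genuinely forces $\xi\chi_B=\eta\chi_B$ as elements of $X^A$, rather than merely agreement on $B$.
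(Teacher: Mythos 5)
Your proof is correct and follows essentially the same route as the paper's: the same appeal to Lemma \ref{trivlemma} for the range computation, the same verification that $P_B\circ j_B=j_B$, and the same reduction of the composition identity to $\chi_{B_1}\chi_{B_2}=\chi_{B_1\cap B_2}$. The extra care you take over well-definedness (the role of $0\in X$) and over $\xi_B=\eta_B\Rightarrow\xi\chi_B=\eta\chi_B$ is detail the paper treats as clear, not a different argument.
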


In future, we will identify $\Real^{X^B}$ with the vector sublattice
$j_B(\Real^{X^B})$ of $\Real^{X^A}$.

If $L$ is any vector lattice and $D$ a non-empty subset of $L$ then
$\langle D\rangle$ will denote the vector sublattice of $L$
generated by $D$. All elements of $\langle D\rangle$ can be obtained
from those of $D$ by the application of a finite number of
multiplications, additions, suprema and infima.
The following  simple consequence of this observation may also be proved directly:

\begin{lemma}If $L$ and $M$ are vector lattices, $T:L\to M$ is a vector lattice homomorphism and $\emptyset\ne
D\subseteq L$ then $\langle T(D)\rangle =T(\langle D\rangle)$.
\end{lemma}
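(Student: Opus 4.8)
The plan is to prove the two inclusions $T(\langle D\rangle)\subseteq \langle T(D)\rangle$ and $\langle T(D)\rangle\subseteq T(\langle D\rangle)$ separately. The first captures the fact that $T$ transports the sublattice-generating operations on $L$ to those on $M$, while the second expresses the minimality of the generated sublattice combined with the fact that homomorphic images of sublattices are sublattices.

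For the inclusion $T(\langle D\rangle)\subseteq \langle T(D)\rangle$, I would invoke the explicit description of $\langle D\rangle$ recalled just above the statement: every element of $\langle D\rangle$ is obtained from elements of $D$ by finitely many scalar multiplications, additions, suprema and infima. Since $T$ is a vector lattice homomorphism it commutes with each of these four operations, so applying $T$ to such an expression yields the same expression with each generator $d\in D$ replaced by $T(d)\in T(D)$, and this lies in $\langle T(D)\rangle$. If one prefers to avoid the informal appeal to ``expressions'', the same conclusion follows by a straightforward induction on the number of operations used to build the element.

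For the reverse inclusion, I would argue that $T(\langle D\rangle)$ is itself a vector sublattice of $M$ containing $T(D)$, whence it must contain the smallest such sublattice, namely $\langle T(D)\rangle$. It contains $T(D)$ because $D\subseteq\langle D\rangle$. To see it is a sublattice, take $x,y\in T(\langle D\rangle)$ and write $x=T(a)$, $y=T(b)$ with $a,b\in\langle D\rangle$; since $\langle D\rangle$ is a sublattice, $a+b$, $\lambda a$, $a\vee b$ and $a\wedge b$ all lie in $\langle D\rangle$, and since $T$ is a homomorphism their images $x+y$, $\lambda x$, $x\vee y$ and $x\wedge y$ therefore lie in $T(\langle D\rangle)$. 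Combining the two inclusions gives the equality.

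There is no serious obstacle here; the only point requiring care is the second inclusion, where one must remember that the homomorphism property of $T$ is exactly what guarantees $x\vee y=T(a)\vee T(b)=T(a\wedge b)$ lies in the image, i.e. that $T(\langle D\rangle)$ is closed under the lattice operations and not merely the vector space operations.
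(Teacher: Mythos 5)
Your proof is correct and is essentially the argument the paper intends: the paper offers no written proof, merely noting the lemma is a ``simple consequence'' of the preceding observation that every element of $\langle D\rangle$ is a finite expression in elements of $D$ under scalar multiplication, addition, suprema and infima --- which is exactly your first inclusion --- while your second inclusion (minimality of $\langle T(D)\rangle$ together with the fact that the homomorphic image of a sublattice is a sublattice) is the standard companion step. One trivial slip in your closing remark: $T(a)\vee T(b)$ equals $T(a\vee b)$, not $T(a\wedge b)$; the body of your proof has this right, so nothing is affected.
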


We specialize further now to the case that $X=\Real$. On the space
$\Real^A$ we can consider the product topology, which is the
topology of pointwise convergence on $A$. By definition, this is the
weakest topology such that all the functions $\delta_a:\xi\mapsto
\xi(a)$ are continuous on $\Real^A$ for each $a\in A$. As a
consequence we certainly have $\langle \{\delta_a:a\in
A\}\rangle\subset C(\Real^A)$. In fact we can do rather better than
this. A function $f:\Real^A\to \Real$ is \emph{homogeneous} if
$f(t\xi)=tf(\xi)$ for $\xi\in \Real^A$ and $t\in[0,\infty)$. The
space $H(\Real^A)$ of continuous homogeneous real-valued functions
on $\Real^A$ is a vector sublattice of $C(\Real^A)$ and clearly
$\langle \{\delta_a:a\in A\}\rangle\subset H(\Real^A)$.

\section{Free Vector Lattices.}

In this section we recapitulate much of the theory of free vector
lattices, both to make this work as self-contained as possible and
in order to establish both our notation (which may not coincide with
that used in other papers on free vector lattices) and to point out
some properties that we will use later.

\begin{definition}If $A$ is a non-empty set then a \emph{free vector
lattice} over $A$ is a pair $(F,\iota)$ where $F$ is a vector
lattice and $\iota:A\to F$ is a map with the property that for any
vector lattice $E$ and any map $\phi:A\to E$ there is a unique
vector lattice homomorphism $T:F\to E$ such that $\phi=T\circ\iota$.
\end{definition}

It follows immediately from this definition that the map $\iota$
must be injective, as we can certainly choose $E$ and $\phi$ to make
$\phi$ injective. Many of the results that follow are almost
obvious, but we prefer to make them explicit.

\begin{proposition}\label{freegen}
If $(F,\iota)$ is a free vector lattice over $A$ then $F$ is
generated, as a vector lattice, by $\iota(A)$.
\begin{proof}
Let $G$ be the vector sublattice of $F$ generated by $\iota(A)$.
Define $\phi:A\to G$ by $\phi(a)=\iota(a)$ then it follows from the
definition that there is a unique vector lattice homomorphism
$T:F\to G$ with $T\big(\iota(a)\big)=\phi(a)=\iota(a)$ for $a\in A$.
If $j:G\to F$ is the inclusion map, then $j\circ T:F\to F$ is a
vector lattice homomorphism with $(j\circ
T)\big(\iota(a)\big)=j\big(\iota(a)\big)=\iota(a)$ for $a\in A$. The
identity on $F$, $I_F$, is also a vector lattice homomorphism from
$F$ into itself with $I_F(\iota(a))=\iota(a)$. The uniqueness part
of the definition of a free vector lattice applied to the map
$a\mapsto \iota(a)$, of $A$ into $F$, tells us that these two maps
are equal so that $j\circ T=I_F$ from which we see that $F\subseteq
G$ and therefore $F=G$ as claimed.
\end{proof}
\end{proposition}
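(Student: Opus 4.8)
The plan is to exploit the uniqueness half of the universal property, since at this stage we have no concrete model of $F$ and so every conclusion must be extracted from the defining property itself. First I would set $G=\langle\iota(A)\rangle$, the vector sublattice of $F$ generated by $\iota(A)$, and observe that $\iota$ may be regarded as a map into $G$: define $\phi\colon A\to G$ by $\phi(a)=\iota(a)$, which makes sense precisely because each $\iota(a)$ already lies in $G$ by construction of the generated sublattice.

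Next I would feed $\phi$ into the universal property of $(F,\iota)$. Since $G$ is itself a vector lattice, there is a (unique) vector lattice homomorphism $T\colon F\to G$ with $T\circ\iota=\phi$, i.e.\ $T(\iota(a))=\iota(a)$ for all $a\in A$. Composing with the inclusion $j\colon G\hookrightarrow F$ then yields an \emph{endomorphism} $j\circ T\colon F\to F$ of the free object, and by design it satisfies $(j\circ T)(\iota(a))=\iota(a)$ for every $a\in A$.

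The crux of the argument---and the only step that is not purely formal---is the comparison with the identity map $I_F$. Both $j\circ T$ and $I_F$ are vector lattice homomorphisms from $F$ to $F$ that, when precomposed with $\iota$, reproduce the map $a\mapsto\iota(a)$ of $A$ into $F$. The uniqueness clause in the definition of a free vector lattice, applied to this map, forces $j\circ T=I_F$. From $j\circ T=I_F$ it follows that $j$ is surjective, whence $G=F$, which is the assertion. I expect the main (indeed the only) obstacle to be the conceptual one of recognizing that it is uniqueness, rather than any explicit construction, that closes the argument; the formation of $G$, the construction of $T$, and the composition with $j$ are all routine.
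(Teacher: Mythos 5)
Your proposal is correct and follows exactly the same argument as the paper: form $G=\langle\iota(A)\rangle$, apply the universal property to the corestriction $\phi$ to get $T\colon F\to G$, and then use the uniqueness clause to identify $j\circ T$ with $I_F$, forcing $G=F$. There is nothing to add or correct.
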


The definition of a free vector lattice  make the following result easy to prove.

\begin{proposition}If $(F,\iota)$ and $(G,\kappa)$ are free vector
lattices over a non-empty  set $A$ then there is a (unique) vector
lattice isomorphism $T:F\to G$ such that
$T\big(\iota(a)\big)=\kappa(a)$ for $a\in A$.
%\begin{proof}As part of the definition of $(F,\iota)$ being free
%over $A$ there is a vector lattice homomorphism $T:F\to G$ with
%$T\big(\iota(a)\big)=\kappa(a)$ for $a\in A$. Similarly, as
%$(G,\kappa)$ is free over $A$ there is a vector lattice homomorphism
%$S:G\to F$ with $S\big(\kappa(a)\big)=\iota(a)$ for $a\in A$. Hence
%$S\circ T$ and $T\circ S$ are the identity on a set of generators of
%$F$ and $G$ respectively so are the identity linear operators. This
%suffices to prove the claim.
%\end{proof}
\end{proposition}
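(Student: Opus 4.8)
The plan is to run the standard universal-property argument, producing homomorphisms in both directions and then using the uniqueness clauses to show the two composites are identities. First I would apply the defining property of the free vector lattice $(F,\iota)$ to the vector lattice $G$ together with the map $\kappa:A\to G$. This yields a unique vector lattice homomorphism $T:F\to G$ with $T\circ\iota=\kappa$, that is, $T(\iota(a))=\kappa(a)$ for every $a\in A$. Symmetrically, applying the defining property of $(G,\kappa)$ to the vector lattice $F$ and the map $\iota:A\to F$ produces a unique vector lattice homomorphism $S:G\to F$ with $S\circ\kappa=\iota$, i.e. $S(\kappa(a))=\iota(a)$.

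Next I would show that $S$ and $T$ are mutually inverse, exactly as in the proof of Proposition \ref{freegen}. The composite $S\circ T:F\to F$ is a vector lattice homomorphism satisfying $(S\circ T)(\iota(a))=S(\kappa(a))=\iota(a)$ for all $a\in A$. The identity $I_F$ is also a vector lattice homomorphism agreeing with this on $\iota(A)$, so the uniqueness part of the definition of the free vector lattice $(F,\iota)$, applied to the map $a\mapsto\iota(a)$ of $A$ into $F$, forces $S\circ T=I_F$. Interchanging the roles of $F$ and $G$ gives $T\circ S=I_G$ in the same way. Hence $T$ is a bijection whose inverse $S$ is itself a vector lattice homomorphism, so $T$ is a vector lattice isomorphism. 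Its uniqueness among homomorphisms sending $\iota(a)$ to $\kappa(a)$ is immediate from the uniqueness clause already used to produce it.

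I do not expect any genuine obstacle here: the argument is a routine diagram chase driven entirely by the universal property, and no structural facts about vector lattices beyond those already recorded are needed. The only point worth stating explicitly is that a bijective vector lattice homomorphism is automatically an isomorphism, which in this setting is handed to us for free because the constructed inverse $S$ is a vector lattice homomorphism by its very definition, so one never has to argue separately that the set-theoretic inverse preserves the lattice operations.
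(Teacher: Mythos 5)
Your argument is correct and is precisely the routine universal-property argument the paper has in mind: the paper states this proposition without proof (remarking only that the definition makes it easy), and its proof of the analogous result for free Banach lattices proceeds exactly as you do, producing $T$ and $S$ from the two universal properties and invoking uniqueness to conclude $S\circ T=I_F$ and $T\circ S=I_G$. Nothing is missing; in particular your closing remark that the inverse is automatically a lattice homomorphism is exactly the point that makes the bijection an isomorphism.
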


In view of this we will just refer to a free vector lattice
$(F,\iota)$ over a set $A$ as \emph{the free vector lattice over
$A$} (or sometimes as the \emph{free vector lattice generated by
$A$} when we identify $A$ with a subset of that free vector
lattice). We will denote it by $FVL(A)$. It will be clear that if
$A$ and $B$ are sets of equal cardinality then $FVL(A)$ and $FVL(B)$
are isomorphic vector lattices, so that $FVL(A)$ depends only on the
cardinality of the set $A$. Thus we will also use the notation
$FVL(\carda)$ for $FVL(A)$ when $\carda$ is the cardinality of $A$.
This is the notation that will be found elsewhere in the literature.
We retain both versions so that we can handle proper inclusions of
$FVL(B)$ into $FVL(A)$ when $B\subset A$ even when $A$ and $B$ have
the same cardinality.

If $\iota:A\to FVL(A)$ is the embedding of $A$ into $FVL(A)$
specified in the definition then we will often write $\delta_a$ for
$\iota(a)$ and refer to the set $\{\delta_a:a\in A\}$ as the
\emph{free generators} of $FVL(A)$.

A slight rewording of the definition of a free vector lattice is
sometimes useful, which trades off uniqueness of the lattice
homomorphism for specifying that $\iota(A)$ is a generating set. The
proof of this follows immediately from results above.

\begin{proposition}\label{freealt}If $A$ is a non-empty set then the vector lattice
$F$ is the free vector lattice over $A$ if and only if
\begin{enumerate}
\item There is a subset $\{\delta_a:a\in A\}\subset F$, with
$\delta_a\ne \delta_b$ if $a\ne b$, which generates $F$ as a vector
lattice.
\item For every vector lattice $E$ and any family $\{x_a:a\in
A\}\subset E$ there is a vector lattice homomorphism $T:F\to E$ such
that $T(\delta_a)=x_a$ for $a\in A$.
\end{enumerate}
\end{proposition}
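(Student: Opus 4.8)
The plan is to prove the two implications separately. The forward implication---that a genuine free vector lattice satisfies (1) and (2)---is almost immediate from the definition together with Proposition \ref{freegen}, so I would dispose of it first; the reverse implication is where the only real (though modest) content lies, namely recovering the \emph{uniqueness} clause of the definition from the mere generating hypothesis (1).

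For the forward direction, suppose $(F,\iota)$ is the free vector lattice over $A$ and set $\delta_a=\iota(a)$. Injectivity of $\iota$, noted immediately after the definition, gives $\delta_a\ne\delta_b$ whenever $a\ne b$, and Proposition \ref{freegen} tells us that $\{\delta_a:a\in A\}$ generates $F$ as a vector lattice; this is exactly (1). For (2), given any vector lattice $E$ and family $\{x_a:a\in A\}\subset E$, I would apply the universal property to the map $\phi:A\to E$, $\phi(a)=x_a$, obtaining a vector lattice homomorphism $T:F\to E$ with $T\circ\iota=\phi$, that is $T(\delta_a)=x_a$. Simply discarding the uniqueness assertion yields precisely statement (2).

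For the reverse direction, assume (1) and (2) and define $\iota:A\to F$ by $\iota(a)=\delta_a$, which is well defined and injective by (1). Given an arbitrary vector lattice $E$ and map $\phi:A\to E$, put $x_a=\phi(a)$ and invoke (2) to produce a vector lattice homomorphism $T$ with $T(\delta_a)=x_a$, i.e.\ $T\circ\iota=\phi$; this supplies the existence part of the definition. The point I expect to be the crux is \emph{uniqueness}. Here I would argue that if $T_1,T_2:F\to E$ are vector lattice homomorphisms both satisfying $T_i\circ\iota=\phi$, then they agree on the generating set $\{\delta_a:a\in A\}$. Since every element of $F=\langle\{\delta_a:a\in A\}\rangle$ is obtained from the $\delta_a$ by finitely many scalar multiplications, additions, suprema and infima---the observation recorded just before the unnumbered lemma preceding this proposition---and since all of these operations are preserved by vector lattice homomorphisms, $T_1$ and $T_2$ must coincide on all of $F$. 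Hence $T_1=T_2$, and $(F,\iota)$ satisfies the definition of the free vector lattice over $A$. This uniqueness step is the only ingredient that is not purely formal, and even it reduces to the standard fact that a lattice homomorphism is determined by its values on a generating set.
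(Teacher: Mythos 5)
Your proof is correct and follows essentially the same route as the paper, which simply asserts that the proposition ``follows immediately from results above'' (the remark on injectivity after the definition, Proposition \ref{freegen}, and the fact that lattice homomorphisms are determined by their values on a generating set). Your write-up merely makes explicit the uniqueness argument that the paper leaves implicit, and it does so correctly.
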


We will find the next simple result useful later.

\begin{proposition}\label{subfree} Let $A$ be a non-empty set and $\{\delta_a:a\in A\}$
be the free generators of $FVL(A)$. Let $B$ and $C$ be non-empty
subsets of $A$ with $B\cap C\ne \emptyset$. \begin{enumerate}
\item The vector sublattice of $FVL(A)$ generated by
$\{\delta_b:b\in B\}$ is (isomorphic to) the free vector lattice
$FVL(B)$.
\item There is a lattice homomorphism projection $P_B$ from $FVL(A)$
onto $FVL(B)$.
\item $P_C P_B=P_B P_C=P_{B\cap C}$.
\end{enumerate}
\begin{proof}
(i) Let $F$ denote the vector sublattice of $FVL(A)$ generated by
$\{\delta_b:b\in B\}$. Suppose that $E$ is a vector lattice and
$\pi:B\to E$ is any map. There is a unique vector lattice
homomorphism $T:FVL(A)\to E$ with $T(\delta_b)=\pi(b)$ for $b\in
B$ and $T(\delta_a)=0$ for $a\in A\setminus B$. The restriction $S$
of $T$ to $F$ gives us a vector lattice homomorphism $S:F\to E$ with
$S(\delta_b)=\pi(b)$. It follows from Proposition \ref{freealt}
that $F=FVL(B)$.

(ii) The free property of $FVL(A)$ gives a (unique) lattice
homomorphism $P_B:FVL(A)\to FVL(A)$ with $P_B(\delta_b)=\delta_b$ if
$b\in B$ and $P_B(\delta_a)=0$ if $a\in A\setminus B$. As $P_B$ maps
the generators of $FVL(A)$ into $FVL(B)$, we certainly have
$P_B\big(FVL(A)\big)\subseteq FVL(B)$. Also, $P_B$ is the identity
on the generators of $FVL(B)$ so is the identity linear operator on
$FVL(B)$ so that $P_B$ is indeed a projection.

(iii) If $a\in B\cap C$ then $P_CP_B\delta_a=P_BP_C\delta_a=P_{B\cap
C}\delta_a=\delta_a$ whilst if $a\notin B\cap C$ then
$P_CP_B\delta_a=P_BP_C\delta_a=P_{B\cap C}\delta_a=0$. Thus the
three vector lattice homomorphisms $P_B P_C$, $P_C P_B$ and
$P_{B\cap C}$ coincide on a set of  generators of $FVL(A)$ and are
therefore equal.
\end{proof}
\end{proposition}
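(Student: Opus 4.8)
The plan is to derive all three statements from the universal property of $FVL(A)$, working chiefly through the reformulation in Proposition \ref{freealt} and through the principle that two vector lattice homomorphisms which agree on a generating set must coincide (a consequence of Proposition \ref{freegen}).

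For (i), I would set $F=\langle\{\delta_b:b\in B\}\rangle$ and verify the two conditions of Proposition \ref{freealt}. The first condition holds by construction, since $\{\delta_b:b\in B\}$ generates $F$ and the $\delta_b$ are distinct because the embedding $\iota$ is injective. For the second, given any vector lattice $E$ and family $\{x_b:b\in B\}\subseteq E$, I would extend to a map on all of $A$ by decreeing $\delta_a\mapsto 0$ for $a\in A\setminus B$, apply the free property of $FVL(A)$ to obtain a lattice homomorphism $T:FVL(A)\to E$ realizing these assignments, and then restrict $T$ to $F$. The restriction is a lattice homomorphism sending each $\delta_b$ to $x_b$, which is exactly what Proposition \ref{freealt}(2) requires, so $F$ is (isomorphic to) $FVL(B)$.

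For (ii), I would again invoke the free property of $FVL(A)$, now applied to the family in $FVL(A)$ itself given by $\delta_b$ for $b\in B$ and $0$ for $a\in A\setminus B$, producing a unique lattice homomorphism $P_B:FVL(A)\to FVL(A)$. Because $P_B$ carries the generators of $FVL(A)$ into $F=FVL(B)$, the lemma asserting $\langle T(D)\rangle=T(\langle D\rangle)$ forces $P_B\big(FVL(A)\big)\subseteq FVL(B)$. Since $P_B$ fixes every $\delta_b$, it fixes the generating set of $FVL(B)$ and is therefore the identity on $FVL(B)$; hence $P_B$ is idempotent and is a projection onto $FVL(B)$. For (iii), the strategy is to check that the three lattice homomorphisms $P_CP_B$, $P_BP_C$ and $P_{B\cap C}$ agree on the generators $\{\delta_a:a\in A\}$ and then conclude equality. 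A short case analysis suffices: if $a\in B\cap C$ each map sends $\delta_a$ to $\delta_a$, whereas if $a\notin B\cap C$ — so $a$ lies outside at least one of $B$, $C$ — each map sends $\delta_a$ to $0$. The standing hypothesis $B\cap C\ne\emptyset$ is precisely what ensures that $FVL(B\cap C)$, and hence $P_{B\cap C}$, is defined.

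I do not expect any genuine obstacle: the entire proposition is a routine consequence of the universal property. The only point needing a moment's care is the verification in (i) that the restriction of $T$ to $F$ is well defined and $E$-valued, which follows from $T(F)=\langle T(\{\delta_b\})\rangle=\langle\{x_b\}\rangle\subseteq E$.
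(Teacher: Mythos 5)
Your proposal is correct and follows essentially the same route as the paper's own proof: part (i) via Proposition \ref{freealt} using the extension-by-zero trick, part (ii) via the free property applied to the family $\{\delta_b:b\in B\}\cup\{0\}$ with the identity-on-generators argument, and part (iii) via the same two-case check on generators. The minor elaborations you add (invoking the lemma $\langle T(D)\rangle=T(\langle D\rangle)$ and noting why the restriction in (i) is well defined) are harmless refinements of steps the paper leaves implicit.
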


So far all our discussions of free vector lattices have been rather
academic as we have not shown that they exist. However it was shown
in \cite{Ba} (see also \cite{Bl}) that they do exist. In essence we
have:

\begin{theorem}\label{FVLexists} For any non-empty set $A$, $FVL(A)$ exists and is
the vector sublattice of $\Real^{\Real^A}$ generated by $\delta_a$
($a\in A$) where $\delta_a(\xi)=\xi(a)$ for $\xi\in \Real^A$.
\end{theorem}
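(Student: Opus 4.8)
The plan is to verify that the pair $(F,\iota)$, where $F$ is the vector sublattice of $\Real^{\Real^A}$ generated by the coordinate functionals $\{\delta_a:a\in A\}$ and $\iota(a)=\delta_a$, satisfies the characterization of a free vector lattice given in Proposition~\ref{freealt}. This reduces the task to two checks: first, that the $\delta_a$ are distinct and generate $F$ (the latter being true by the very definition of $F$); and second, the universal property, namely that for any vector lattice $E$ and any family $\{x_a:a\in A\}\subset E$ there is a vector lattice homomorphism $T:F\to E$ with $T(\delta_a)=x_a$. Distinctness of the $\delta_a$ is immediate, since for $a\ne b$ one can find $\xi\in\Real^A$ with $\xi(a)\ne\xi(b)$, so $\delta_a\ne\delta_b$ as functions.

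The substance lies in the universal property, and here the standard approach is to realize the candidate homomorphism $T$ through a point-evaluation device. Given the target family $\{x_a:a\in A\}$ in $E$, I would pass to a suitable representation of (the sublattice of $E$ generated by the $x_a$) so that each $x_a$ becomes a continuous function and evaluation at points of $E$'s dual-type object recovers lattice operations. Concretely, for each positive linear functional (or, more robustly, each vector lattice homomorphism) $\phi$ from $\langle\{x_a:a\in A\}\rangle$ into $\Real$, the family $(\phi(x_a))_{a\in A}$ determines a point $\xi_\phi\in\Real^A$; one then defines $T$ on a generator-built element $g=g(\delta_{a_1},\dots,\delta_{a_n})\in F$ by applying the same lattice-polynomial expression to $x_{a_1},\dots,x_{a_n}$ in $E$. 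The key point making this well defined is that $F\subseteq\Real^{\Real^A}$ consists of genuine functions: if two lattice-polynomial expressions in the $\delta_a$ agree as elements of $F$, i.e. agree at every $\xi\in\Real^A$, then in particular they agree at every point of the form $\xi_\phi$, and since the $\Real$-valued homomorphisms on $\langle\{x_a\}\rangle$ separate its elements, the corresponding expressions in the $x_a$ must coincide in $E$. This separation property is what guarantees that the assignment does not depend on the chosen representation of $g$, and hence that $T$ is a genuine, well-defined map respecting all the lattice operations.

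I would organize the argument as follows. First, reduce to the case $E=\langle\{x_a:a\in A\}\rangle$, since a homomorphism into a sublattice is a homomorphism into $E$. Next, invoke the fact that any vector lattice embeds (via its homomorphisms into $\Real$, equivalently via a representation on a compact space or by a Yosida–Kakutani–type functional representation) in a way that lattice operations are computed pointwise; this is where one uses that $\Real$-valued lattice homomorphisms separate points of a vector lattice generated by finitely many elements at a time. Then define $T$ on generators by $T(\delta_a)=x_a$ and extend to lattice polynomials, showing well-definedness via the pointwise identification just described. Finally, check that $T$ is a lattice homomorphism (automatic, as it preserves each of $+,\vee,\wedge$ and scalar multiplication by construction) and that $T(\delta_a)=x_a$.

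The main obstacle is establishing well-definedness of $T$, that is, showing that an equality of two lattice-polynomial expressions in the $\delta_a$ as functions on $\Real^A$ forces the corresponding equality in the arbitrary target $E$. The crux is that the functions $\delta_a$ separate ``enough'' points of $\Real^A$: every finite tuple of real values $(\phi(x_{a_1}),\dots,\phi(x_{a_n}))$ arising from a homomorphism $\phi$ on $E$ is realized by some $\xi\in\Real^A$, so pointwise equality on all of $\Real^A$ transfers through $\phi$ to equality in $\Real$, and then homomorphism-separation upgrades this to equality in $E$. Once this transfer principle is in place the remainder is routine, so I would spend the bulk of the proof making precise the functional representation of the finitely generated sublattice and the realization of its scalar homomorphisms as evaluation points in $\Real^A$.
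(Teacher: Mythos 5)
Your reduction to Proposition \ref{freealt} and the transfer mechanism $\phi\big(p(x_{a_1},\dots,x_{a_n})\big)=p\big(\phi(x_{a_1}),\dots,\phi(x_{a_n})\big)$ (valid only when $\phi$ is a lattice homomorphism, not a mere positive functional, so your parenthetical hedge must go) are fine, but the step on which the whole proof rests --- that the real-valued lattice homomorphisms on $\langle\{x_a:a\in A\}\rangle$ separate its points --- is false, and it is exactly where the difficulty of the theorem sits. The universal property must hold for an \emph{arbitrary} target $E$, in particular a non-Archimedean one. Take $E=\Real^2$ with the lexicographic order and $x_1=(1,0)$, $x_2=(0,1)$: then $E$ is generated as a vector lattice by $x_1,x_2$ (it is even their linear span), yet every lattice homomorphism $\phi:E\to\Real$ is of the form $\phi(s,t)=as$ with $a\ge 0$ (positivity on the lexicographic cone forces the coefficient of $t$ to vanish), so these homomorphisms cannot distinguish $(0,1)$ from $(0,2)$. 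For the same reason no Yosida--Kakutani-type representation of such an $E$ by real-valued functions with pointwise lattice operations exists: any such representation factors through the real-valued homomorphisms. So for this two-generated target your well-definedness argument collapses, and it cannot be rearranged while real-valued homomorphisms remain the separating family.

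The gap is not a technicality: the assertion that lattice-polynomial identities valid on $\Real^A$ hold in every vector lattice (equivalently, that $\Real$ generates the variety of vector lattices) is the entire content of the theorem, and indeed the paper does not reprove it but cites \cite{Ba} and \cite{Bl}. The standard repair of your scheme replaces real-valued homomorphisms by homomorphisms onto totally ordered quotients: for every $0\ne y\in E$ there is a prime ideal (a value of $y$) excluding $y$, and quotients by prime ideals are totally ordered, so these quotient maps \emph{do} separate points of an arbitrary $E$. One is thereby reduced to verifying identities in a totally ordered vector lattice, where the span of $x_{a_1},\dots,x_{a_n}$ is a finite-dimensional totally ordered space, hence lexicographically ordered; the finitely many order relations among the linear combinations occurring in the two lattice polynomials are then realized at an actual point $\xi\in\Real^A$ (for instance via the functionals $(t_1,\dots,t_k)\mapsto\sum_{i}\epsilon^{i-1}t_i$ with $\epsilon>0$ small), which is the correct substitute for your points $\xi_\phi$. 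Without this detour through prime ideals and lexicographic orders, the well-definedness of $T$ remains unproved.
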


It is reasonable to ask how this representation of $FVL(A)$
interacts with the properties of free vector lattices noted above.
With the notation of \S\ref{notation}, if $\emptyset\ne B\subseteq
A$ then the map $j_B:\Real^{\Real^B}\to \Real^{\Real^A}$  is a
vector lattice embedding of $\Real^{\Real^B}$ into
$\Real^{\Real^A}$. This corresponds precisely to the embedding of
$FVL(B)$ into $FVL(A)$ as indicated in Proposition \ref{subfree}. If
we use $\delta_a$ to denote the map $\xi\mapsto \xi(a)$ on $\Real^A$
and $\eta_b$ for the map $\xi\mapsto \xi(b)$ on $\Real^B$ then we
have, for $b\in B$ and $\xi\in\Real^A$
\[(j_B\eta_b)(\xi)=\eta_b(\xi_B)=\xi(b)=\delta_b(\xi)\]
so that $j_B\eta_b=\delta_b$. We know from \S\ref{notation} that
$j_B$ is a vector lattice homomorphism so that $j_B\big(FVL(B)\big)$
is the vector sublattice of $FVL(A)$ generated by $\{\delta_b:b\in
B\}$ which is precisely what was described in Proposition
\ref{subfree}.

Also, if $B\subseteq A$ then we may consider $FVL(B)\subseteq
FVL(A)\subseteq \Real^{\Real^A}$. The projection map $P_B:FVL(A)\to
FVL(B)$ defined in Proposition \ref{subfree} (2) is then precisely
the restriction to $FVL(A)$ of the projection
$P_B:\Real^{\Real^A}\to \Real^{\Real^B}$ described in Lemma \ref{restprops}. We
will temporarily denote this projection by $\tilde{P}_B$ to
distinguish it from the abstract projection. Once we establish
equality that distinction will not be required and we will omit the
tilde. As $P_B$ and $\tilde{P}_B$ are both vector lattice
homomorphisms it suffices to prove this equality for the generators
of $FVL(A)$. If $b\in B$ then
\[(\tilde{P}_B\delta_b)(\xi)=\delta_b(\xi\chi_B)=(\xi\chi_B)(b)=\xi(b)=\delta_b(\xi)\]
for $\xi\in \Real^A$ so that
$\tilde{P}_B\delta_b=\delta_b=P_B\delta_b$. If, on the other hand,
$a\in A\setminus B$ then
\[(\tilde{P}_B\delta_a)(\xi)=\delta_a(\xi\chi_B)=0\]
for $\xi\in \Real^A$ so that $\tilde{P}_B\delta_a=0=P_B\delta_a$.

A few more observations will be of use later.

\begin{proposition}\label{freefinite}If $A$ is a non-empty set and $\mathcal{F}(A)$ denotes the collection of all non-empty
finite subsets of $A$, then
\[FVL(A)=\bigcup_{B\in \mathcal{F}(A)} FVL(B).\]
\begin{proof}Any element of $FVL(A)$ is in the vector sublattice of
$FVL(A)$ generated by a finite number of generators
$\{\delta_{a_1},\delta_{a_2},\dots,\delta_{a_n}\}$ so lies in
$FVL(\{a_1,a_2,\dots,a_n\})$.
\end{proof}
\end{proposition}

\begin{proposition}\label{finiteou}If $A$ is a finite set then $\sum_{a\in A}|\delta_a|$ is a strong order unit for $FVL(A)$.
\begin{proof}Obvious as $FVL(A)$ is generated by the set
$\{\delta_a:a\in A\}$.
\end{proof}
\end{proposition}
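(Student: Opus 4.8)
The plan is to put $u=\sum_{a\in A}|\delta_a|$ and to show that the set
\[S=\{x\in FVL(A):|x|\le\lambda u\text{ for some }\lambda\ge 0\}\]
is the whole of $FVL(A)$, which is exactly the statement that $u$ is a strong order unit. Observe first that $u$ is a genuine element of $FVL(A)$: since $A$ is finite this is a finite sum of the elements $|\delta_a|\in FVL(A)$. This is the only place where finiteness of $A$ is used, and it is essential, since for infinite $A$ the ``sum'' $\sum_{a\in A}|\delta_a|$ need not exist in the lattice.

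My approach is to note that $S$ is precisely the principal ideal generated by $u$, and then to verify that it is a vector sublattice containing every generator. Each generator lies in $S$, because $|\delta_a|\le\sum_{b\in A}|\delta_b|=u$, so we may take $\lambda=1$. To see that $S$ is closed under the vector lattice operations, suppose $|x|\le\lambda u$ and $|y|\le\mu u$. Then $|x+y|\le(\lambda+\mu)u$ and $|cx|\le|c|\lambda u$, while for the suprema one uses the elementary inequality $|x\vee y|\le|x|\vee|y|$ together with $\lambda u\vee\mu u=\max(\lambda,\mu)\,u$, the latter being valid because $u\ge 0$; this yields $|x\vee y|\le\max(\lambda,\mu)\,u$, and infima are handled symmetrically. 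Hence $S$ is a vector sublattice.

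It now follows from Proposition \ref{freegen} that $FVL(A)$ is generated as a vector lattice by $\{\delta_a:a\in A\}$, and since $S$ is a vector sublattice containing all of these generators we obtain $FVL(A)=\langle\{\delta_a:a\in A\}\rangle\subseteq S\subseteq FVL(A)$, so that $S=FVL(A)$ and $u$ is a strong order unit. There is no real obstacle in this argument; the only step warranting explicit care is the closure of $S$ under $\vee$ and $\wedge$, which is why I would record the inequality $|x\vee y|\le|x|\vee|y|$ rather than leave the matter at ``obvious''.
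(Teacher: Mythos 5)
Your proof is correct, and it is exactly the argument the paper's one-line proof (``obvious as $FVL(A)$ is generated by $\{\delta_a:a\in A\}$'') leaves implicit: the principal ideal generated by $u=\sum_{a\in A}|\delta_a|$ is a vector sublattice containing every generator, hence equals $FVL(A)$ by Proposition \ref{freegen}. Your explicit verification of closure under $\vee$ and $\wedge$ via $|x\vee y|\le|x|\vee|y|$ is sound and simply fills in the details the authors declared obvious.
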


\begin{lemma}\label{pointeval}The real valued vector lattice homomorphisms on
$FVL(A)$  are precisely the evaluations at points of $\Real^A$.
\begin{proof}
It is clear that if $\xi\in\Real^A$ then the map
$\omega_\xi:f\mapsto f(\xi)$ is a real valued vector lattice
homomorphism on $\Real^{\Real^A}$ and therefore on $FVL(A)$. Note,
in particular, that $\omega_\xi(\delta_a)=\delta_a(\xi)=\xi(a)$.
 Conversely, if $\omega$ is a real valued vector
lattice homomorphism on $FVL(A)$ then we may define $\xi\in \Real^A$
by $\xi(a)=\omega(\delta_a)$ for $a\in A$. Now we see that for this
$\xi$, $\omega_\xi$ is a real valued vector lattice homomorphism on
$FVL(A)$ with $\omega_\xi(\delta_a)=\xi(a)=\omega(\delta_a)$. The
two maps $\omega$ and $\omega_\xi$ coincide on a set of  generators
of $FVL(A)$ so, being vector lattice homomorphisms, are equal.
\end{proof}
\end{lemma}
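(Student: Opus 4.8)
The plan is to prove the two inclusions separately; the forward one is formal, and the reverse one carries all the content. First I would dispose of the easy direction. For a fixed $\xi\in\Real^A$ the evaluation $\omega_\xi:f\mapsto f(\xi)$ respects the pointwise operations on $\Real^{\Real^A}$, so it is a real-valued vector lattice homomorphism on $\Real^{\Real^A}$ and hence, by restriction, on the sublattice $FVL(A)$. I would immediately record the computation $\omega_\xi(\delta_a)=\delta_a(\xi)=\xi(a)$, since this pins down the values of such a homomorphism on the free generators and is precisely what the converse will exploit.

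For the reverse direction, given an arbitrary real-valued vector lattice homomorphism $\omega$ on $FVL(A)$, the natural move is to read off a candidate point from the action of $\omega$ on the generators: I would set $\xi(a)=\omega(\delta_a)$ to define $\xi\in\Real^A$. Then $\omega_\xi$ is a second real-valued vector lattice homomorphism on $FVL(A)$, and by the computation above $\omega_\xi(\delta_a)=\xi(a)=\omega(\delta_a)$ for every $a\in A$, so that $\omega$ and $\omega_\xi$ agree on the free generators $\{\delta_a:a\in A\}$.

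To finish I would invoke the generation/uniqueness principle underlying the whole theory: by Proposition \ref{freegen} (equivalently, by the representation in Theorem \ref{FVLexists}) the set $\{\delta_a:a\in A\}$ generates $FVL(A)$ as a vector lattice, so every element is built from the generators by finitely many additions, scalar multiplications, suprema and infima, all of which both $\omega$ and $\omega_\xi$ preserve. Two vector lattice homomorphisms that coincide on a generating set therefore coincide everywhere, giving $\omega=\omega_\xi$. I do not expect any serious obstacle here: the reverse direction contains the only genuine idea, namely the reduction from agreement on generators to global equality, and there is nothing resembling a hard estimate or a delicate construction to overcome.
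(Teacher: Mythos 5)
Your proposal is correct and follows essentially the same route as the paper's proof: the forward direction by restriction of evaluation homomorphisms on $\Real^{\Real^A}$, and the converse by defining $\xi(a)=\omega(\delta_a)$ and noting that two vector lattice homomorphisms agreeing on the generating set $\{\delta_a:a\in A\}$ must be equal. The only difference is that you spell out the generation/uniqueness step in slightly more detail than the paper does, which is harmless.
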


\section{Free Banach Lattices.}

\begin{definition}If $A$ is a non-empty set then a \emph{free Banach
lattice} over $A$ is a pair $(X,\iota)$ where $X$ is a Banach
lattice and $\iota:A\to X$ is a bounded map with the property that for any
Banach lattice $Y$ and any bounded map $\kappa:A\to Y$ there is a unique
vector lattice homomorphism $T:X\to Y$ such that $\kappa=T\circ\iota$ and $\|T\|=\sup\{\|\kappa(a)\|:a\in A\}$.
\end{definition}

It is clear that the set $\{\iota(a):a\in A\}$ generates $X$ as a Banach lattice (cf Proposition \ref{freegen}).

\begin{proposition}The definition forces each $\iota(a)$ to have norm precisely one. For if $\kappa(a)=1\in \Real$ for each
$a\in A$ then the map $T$ that is guaranteed to exist has norm 1, so
that $1=\|T\big(\iota(a)\big)\|\le \|\iota(a)\|$. On the other hand,
if we take $\kappa=\iota$, then $T$ is identity operator, with norm
1, so that $\sup\{\|\iota(a)\|:a\in A\}=1$.
\end{proposition}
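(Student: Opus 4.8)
The plan is to pin down $\|\iota(a)\|$ from both sides by feeding two carefully chosen pieces of test data into the universal property: one choice will force $\|\iota(a)\|\ge 1$ for each $a$, and the other will force $\sup\{\|\iota(a)\|:a\in A\}\le 1$.

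For the lower bound I would take $Y=\Real$, viewed as a one-dimensional Banach lattice, together with the constant map $\kappa(a)=1$ for every $a\in A$. This $\kappa$ is bounded with $\sup\{\|\kappa(a)\|:a\in A\}=1$, so the definition supplies a vector lattice homomorphism $T:X\to\Real$ with $T\circ\iota=\kappa$ and $\|T\|=1$. Then for each fixed $a$ we get $1=|\kappa(a)|=|T(\iota(a))|\le\|T\|\,\|\iota(a)\|=\|\iota(a)\|$, so $\|\iota(a)\|\ge 1$. In particular every $\iota(a)$ is nonzero, hence $X\ne\{0\}$, a fact I will use below to know that $\|I_X\|=1$.

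For the upper bound I would take $Y=X$ and $\kappa=\iota$ itself, which is legitimate because $\iota$ is assumed bounded. The universal property then produces a vector lattice homomorphism $T:X\to X$ with $T\circ\iota=\iota$ and $\|T\|=\sup\{\|\iota(a)\|:a\in A\}$. The key observation is that the identity map $I_X$ is also a vector lattice homomorphism satisfying $I_X\circ\iota=\iota$. Since $\{\iota(a):a\in A\}$ generates $X$ as a Banach lattice, $T$ and $I_X$ agree on this generating set, hence on the vector sublattice it generates, and hence --- both maps being continuous --- on its closure, which is all of $X$. Therefore $T=I_X$, and so $\sup\{\|\iota(a)\|:a\in A\}=\|T\|=\|I_X\|=1$, giving $\|\iota(a)\|\le 1$ for every $a$. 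Combining the two bounds yields $\|\iota(a)\|=1$.

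The only delicate point is the identification $T=I_X$ in the second step. I would justify it directly from the fact (noted immediately after the definition) that $\iota(A)$ generates $X$ as a Banach lattice, rather than by invoking the uniqueness clause of the definition. Appealing to uniqueness would require first verifying that $I_X$ satisfies $\|I_X\|=\sup\{\|\iota(a)\|:a\in A\}$, which is exactly the conclusion being sought and would render the argument circular; the generation-plus-continuity argument sidesteps this and is precisely what forces the norm of the canonical homomorphism to collapse to $1$.
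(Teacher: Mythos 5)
Your proof is correct and takes essentially the same route as the paper: both bounds come from exactly the same two choices of test data ($\kappa\equiv 1$ into $\Real$ for the lower bound $\|\iota(a)\|\ge 1$, and $\kappa=\iota$ for the upper bound). Your justification that $T=I_X$ --- agreement with the identity on the generating set $\iota(A)$, hence on the closed sublattice it generates, which is all of $X$ --- is a careful spelling-out of the paper's terse assertion that ``$T$ is identity operator,'' and it rightly avoids the circularity that a naive appeal to the uniqueness clause (which bundles in the norm condition) could introduce.
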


\begin{proposition}If $(X,\iota)$ and $(Y,\kappa)$ are free Banach lattices over a non-empty set $A$ then there is a (unique) isometric order isomorphism
$T:X\to Y$ such that $T\big(\iota(a)\big)=\kappa(a)$ for $a\in A$.
\begin{proof} As $(X,\iota)$ is free, there is a vector lattice homomorphism $T:X\to Y$ with $T\big(\iota(a)\big)=\kappa(a)$ for
$a\in A$ with $\|T\|=\sup\{\|\kappa(a)\|:a\in A\}=1$, by the
preceding proposition. There is similarly a contractive vector
lattice homomorphism $S:Y\to X$ with
$S\big(\kappa(a)\big)=\iota(a)$. By uniqueness, the compositions $S\circ T$ and
$T\circ S$ must be
the identity operators. This suffices to prove our claim.
\end{proof}
\end{proposition}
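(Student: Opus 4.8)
The plan is to run the familiar categorical uniqueness argument for free objects, taking care that the Banach-lattice version delivers an \emph{isometric} isomorphism rather than merely a bicontinuous one. First I would invoke the universal property of $(X,\iota)$ against the bounded map $\kappa:A\to Y$. By the preceding proposition each $\kappa(a)$ has norm exactly one, so $\sup\{\|\kappa(a)\|:a\in A\}=1$, and the defining property of a free Banach lattice produces a unique lattice homomorphism $T:X\to Y$ with $T\circ\iota=\kappa$ and $\|T\|=1$. Symmetrically, applying the universal property of $(Y,\kappa)$ to the bounded map $\iota:A\to X$ (where again $\sup\{\|\iota(a)\|:a\in A\}=1$) yields a contractive lattice homomorphism $S:Y\to X$ with $S\circ\kappa=\iota$.

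Next I would show that $S$ and $T$ are mutually inverse. The composition $S\circ T:X\to X$ is a lattice homomorphism satisfying $(S\circ T)\circ\iota=S\circ\kappa=\iota$, and the identity map $I_X$ satisfies the same identity $I_X\circ\iota=\iota$. The uniqueness clause in the universal property of $X$, applied to the map $\iota:A\to X$ itself, forces $S\circ T=I_X$; the mirror-image argument gives $T\circ S=I_Y$. Hence $T$ is a bijection with $T^{-1}=S$, and uniqueness of $T$ among maps sending $\iota(a)$ to $\kappa(a)$ is immediate from the uniqueness already built into the universal property.

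Finally I would upgrade ``bijective contraction'' to ``isometric order isomorphism.'' Since both $T$ and $S=T^{-1}$ are contractions, for every $x\in X$ we have $\|x\|=\|S(Tx)\|\le\|Tx\|\le\|x\|$, forcing $\|Tx\|=\|x\|$, so $T$ is isometric; a bijective lattice homomorphism automatically has a lattice-homomorphism inverse, so $T$ is in particular an order isomorphism. The only point requiring genuine attention — and the sole place where the Banach-lattice setting differs from the purely algebraic one — is this last isometry verification, which hinges on having contractions in \emph{both} directions; everything else is the standard diagram chase.
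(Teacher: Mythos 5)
Your proof is correct and follows essentially the same route as the paper's: invoke the universal property in both directions to obtain contractive homomorphisms $T$ and $S$, use the uniqueness clause (applied to the map $\iota:A\to X$ itself, resp.\ $\kappa$) to conclude $S\circ T=I_X$ and $T\circ S=I_Y$, and then deduce isometry from having contractions both ways. The only difference is that you spell out the final isometry step $\|x\|=\|S(Tx)\|\le\|Tx\|\le\|x\|$, which the paper leaves implicit in ``this suffices to prove our claim.''
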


Similarly to the free vector lattice case, we use the notation $FBL(A)$ for the free Banach lattice over $A$ if it exists (which we will shortly show is the case.)
Since we know that if $A$ and $B$ have the same cardinality then $FBL(A)$ and $FBL(B)$ are isometrically order isomorphic, we will also
use the notation $FBL(\carda)$ to denote a free Banach lattice on a set of cardinality $\carda$.
Again, we will also use the notation $\delta_a$ for $\iota(a)$ and refer to $\{\delta_a:a\in A\}$ as the \emph{free generators} of $FBL(A)$.

Our first task is to show
that free Banach lattices do indeed exist.

\begin{definition}
If $A$ is a non-empty set then we will define a mapping from $FVL(A)^\sim$ into the extended non-negative reals by
\[\|\phi\|^\dag=\sup\{|\phi|(|\delta_a|):a\in A\}.\]
We define also
\[FVL(A)^\dag=\{\phi\in FVL(A)^\sim:\|\phi\|^\dag<\infty\}\]
which it is clear  is a vector lattice ideal in the Dedekind complete vector lattice $FVL(A)^\sim$.
\end{definition}

Suppose that a positive functional $\phi$ vanishes on each $|\delta_a|$. Each element $x$ of $FVL(A)$ lies in the sublattice of $FVL(A)$ generated by a finite set of generators $\{a_k:1\le k\le n\}$. By Proposition \ref{finiteou} $e=\sum_{k=1}^n |\delta_{a_k}|$ is a strong order unit for that sublattice. Thus there is $\lambda\in\Real$ with $|x|\le \lambda e$ so that $|\phi(x)|\le \phi(|x|)\le \phi(\lambda e)=\lambda \sum_{k=1}^n \phi(|\delta_{a_k}|)=0$ and thus $\phi=0$. It is now
clear that $\|\cdot\|^\dag$ is a lattice norm on $FVL(A)^\dag$.
Given the embedding of $FVL(A)$ in $\Real^{\Real^A}$ given in
Theorem \ref{FVLexists}, if $\xi\in \Real^A$ then $\omega_\xi\in FVL(A)^\dag$ if and only if the map $\xi:A\to \Real$ is bounded and then 
$\|\omega_\xi\|^\dag=\sup_{a\in A} |\xi(a)|$. By Lemma \ref{pointeval}, these maps are
lattice homomorphisms. Note that if $A$ is an infinite set then there is an unbounded $\xi\in \Real^A$ which induces 
$\omega_\xi\in FVL(A)^\sim\setminus FVL(A)^\dag$.

\begin{definition}
For $f\in FVL(A)$, where $A$ is a non-empty set, define
\[\|f\|_F=\sup\{\phi(|f|):\phi\in FVL(A)^\dag_+, \|\phi\|^\dag\le 1\}.\]
\end{definition}

\begin{proposition}For any non-empty set $A$, $\|\cdot\|_F$ is a lattice norm on $FVL(A)$.
\begin{proof}Our first step is to show that $\|\cdot\|_F$ is real-valued. By Proposition \ref{freefinite}, any $f\in FVL(A)$
actually lies in $FVL(B)$ for some finite subset $B\subseteq A$. By
Proposition \ref{finiteou}, $FVL(B)$ has a strong order unit
$\sum_{b\in B}|\delta_b|$, so there is $\lambda$ with $|f|\le \lambda
\sum_{b\in B}|\delta_b|$. If $\phi\in FVL(A)^\dag_+$ with
$\|\phi\|^\dag\le 1$ then
\[\phi(|f|)\le \phi\left(\lambda \sum_{b\in B}|\delta_b|\right)=\lambda \sum_{b\in B}\phi(|\delta_b|)\le \lambda \sum_{b\in B}1\]
so that $\|f\|_F$ is certainly finite.

If $\|f\|_F=$ then $\phi(|f|)=0$ for all $\phi\in FVL(A)^\dag_+$. Using the observation above, $f(\xi)=\omega_\xi(f)=0$ for any bounded function $\xi:A\to\Real$. But there is a finite set $B\subset A$ such that $f\in FVL(B)$, so that $f(\xi)=f(\xi\chi_B)$ for all $\xi\in \Real^A$. As each $\xi\chi_B$ is bounded, $f(\xi)=0$ for all $\xi\in \Real^A$ and therefore $f=0$.

That $\|\cdot\|_F$ is sublinear and positively homogeneous are
obvious, so that $\|\cdot\|_F$ is a norm on $FVL(A)$, which is
clearly a lattice norm.
\end{proof}
\end{proposition}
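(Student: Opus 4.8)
The plan is to verify, in turn, the properties of a lattice norm: finiteness (so that $\|\cdot\|_F$ is genuinely real-valued), definiteness, absolute homogeneity together with subadditivity, and monotonicity with respect to the modulus. Three of these follow immediately from the description of $\|\cdot\|_F$ as a supremum of the quantities $\phi(|f|)$ over positive functionals, and I expect the only genuine work to lie in establishing definiteness.

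For finiteness, I would exploit that $FVL(A)$ is the increasing union of its finitely generated sublattices. Given $f \in FVL(A)$, Proposition \ref{freefinite} places $f$ in $FVL(B)$ for some finite $B \subseteq A$, and Proposition \ref{finiteou} supplies the strong order unit $e = \sum_{b \in B}|\delta_b|$, so that $|f| \le \lambda e$ for a suitable $\lambda \ge 0$. For any $\phi \in FVL(A)^\dag_+$ with $\|\phi\|^\dag \le 1$ this yields $\phi(|f|) \le \lambda\sum_{b\in B}\phi(|\delta_b|) \le \lambda\,\card B$, bounding the supremum and showing $\|f\|_F < \infty$.

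The crux is definiteness, and here I would use the concrete realisation of $FVL(A)$ inside $\Real^{\Real^A}$. Suppose $\|f\|_F = 0$; then $\phi(|f|) = 0$ for every $\phi \in FVL(A)^\dag_+$, since rescaling brings any such $\phi$ into the admissible set. By Lemma \ref{pointeval} the evaluations $\omega_\xi$ at bounded $\xi \in \Real^A$ are positive lattice homomorphisms lying in $FVL(A)^\dag$, so $|f|(\xi) = \omega_\xi(|f|) = 0$, and hence $f(\xi) = 0$, for every bounded $\xi$. The obstacle is that to conclude $f = 0$ as a function on all of $\Real^A$ I must also handle unbounded $\xi$, for which no admissible evaluation functional is available. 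I would overcome this by invoking finite dependence once more: since $f \in FVL(B)$ for a finite $B$, Lemma \ref{trivlemma} gives $f(\xi) = f(\xi\chi_B)$ for all $\xi$, and $\xi\chi_B$ is bounded because $B$ is finite. Thus $f(\xi) = 0$ for every $\xi \in \Real^A$, whence $f = 0$.

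Finally, absolute homogeneity follows from $|\lambda f| = |\lambda|\,|f|$, subadditivity from $\phi(|f+g|) \le \phi(|f|) + \phi(|g|)$ for positive $\phi$ followed by taking suprema, and the lattice property from the observation that $|f| \le |g|$ forces $\phi(|f|) \le \phi(|g|)$ for every positive $\phi$. These require no more than monotonicity and linearity of the functionals, so I expect them to be routine.
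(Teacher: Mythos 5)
Your proof is correct and follows essentially the same route as the paper's: finiteness via finite generation (Proposition \ref{freefinite}) and the strong order unit of Proposition \ref{finiteou}, and definiteness by evaluating at bounded points (which lie in $FVL(A)^\dag$ by the observation following the definition of $\|\cdot\|^\dag$, together with Lemma \ref{pointeval}) and then reducing unbounded $\xi$ to the bounded $\xi\chi_B$ using that $f\in FVL(B)$ for finite $B$. The remaining norm and lattice properties are routine in both treatments, so there is nothing to add.
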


Note in particular that we certainly have  $\|\delta_a\|_F=1$ for all $a\in A$. In fact, this construction gives us our desired free Banach
lattices.

\begin{theorem}\label{freeexists}For any non-empty set $A$, the pair consisting of the completion of
$FVL(A)$, under the norm $\|\cdot\|_F$, and the map $\iota:a\to
\delta_a$, is the free Banach lattice over $A$.
\begin{proof}
Suppose that $Y$ is any Banach lattice and $\kappa :A\to Y_1$, the
unit ball of $Y$. There is a vector lattice homomorphism
$T:FVL(A)\to Y$ with $T\big(\iota(a)\big)=\kappa(a)$ for all $a\in
A$, as $FVL(A)$ is free. We claim that if $f\in FVL(A)$ with
$\|f\|_F\le 1$ then $\|Tf\|=\big\||Tf|\|=\big\|T(|f|)\big\|\le 1$ in
$Y$, where we have the used that fact that the norm in $Y$ is a
lattice norm and that $T$ is a lattice homomorphism. If this were
not the case then we could find $\psi\in Y^*_{1+}$, a positive
linear functional on $Y$ with norm at most 1, with
$\psi\big(T(|f|)\big)>1$. As
$\|T\big(\iota(a)\big)\|=\|\kappa(a)\|\le 1$ for all $a\in A$, we
have $\big\||T(\iota(a)|\big\|=\big\|T(|\iota(a)|)\big\|\le 1$,
using again the fact that $T$ is a lattice homomorphism. Thus
$\Big|\psi\Big(T\big(|\iota(a)|\big)\Big)\Big|\le 1$ for all $a\in
A$. Using the functional $\psi\circ T$ in the definition of
$\|f\|_F$, we see that $\|f\|_F\ge \psi\big(T(|f|)\big)>1$,
contradicting our assumption that $\|f\|_F\le 1$.

The completion of $FVL(A)$ is a Banach lattice and $T$ will extend
by continuity to it whilst still taking values in $Y$ as $Y$ is
complete.
\end{proof}
\end{theorem}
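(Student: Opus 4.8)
The plan is to verify the universal property directly, constructing the required homomorphism first at the purely algebraic level on $FVL(A)$ and then extending it by continuity. Write $X$ for the completion of $\big(FVL(A),\|\cdot\|_F\big)$; since the completion of a normed vector lattice is again a Banach lattice, $X$ is a Banach lattice, and $\iota:a\mapsto\delta_a$ is a bounded map with $\|\iota(a)\|_F=1$. So what remains is to check that for every Banach lattice $Y$ and every bounded $\kappa:A\to Y$ there is a unique vector lattice homomorphism $T:X\to Y$ with $T\circ\iota=\kappa$ and $\|T\|=\sup\{\|\kappa(a)\|:a\in A\}$.

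Set $M=\sup\{\|\kappa(a)\|:a\in A\}$. By the free vector lattice property (Proposition \ref{freealt}(2)) there is a vector lattice homomorphism $T_0:FVL(A)\to Y$ with $T_0\delta_a=\kappa(a)$ for all $a\in A$. The crucial step --- and the only one that is not routine --- is the norm estimate $\|T_0 f\|\le M\|f\|_F$ for every $f\in FVL(A)$. I would argue by contradiction: if $\|T_0 f\|>M\|f\|_F$ for some $f$ (necessarily with $\|f\|_F>0$ and $M>0$), then, $T_0$ being a lattice homomorphism, $\|T_0 f\|=\||T_0 f|\|=\|T_0|f|\|$, and Hahn--Banach supplies a positive $\psi\in Y^*$ with $\|\psi\|\le1$ and $\psi\big(T_0|f|\big)>M\|f\|_F$. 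The point is that the pulled-back functional $\phi=\psi\circ T_0$ is positive on $FVL(A)$ and satisfies $\phi(|\delta_a|)=\psi\big(|\kappa(a)|\big)\le\|\kappa(a)\|\le M$, so $\|\phi\|^\dag\le M$ and $\phi/M$ is admissible in the definition of $\|\cdot\|_F$. This forces $\|f\|_F\ge (\phi/M)(|f|)=M^{-1}\psi\big(T_0|f|\big)>\|f\|_F$, a contradiction.

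With the estimate in hand the rest is bookkeeping. It gives $\|T_0\|\le M$ for the operator norm relative to $\|\cdot\|_F$, while evaluating at the generators yields $\|T_0\|\ge\|T_0\delta_a\|/\|\delta_a\|_F=\|\kappa(a)\|$ for each $a$, hence $\|T_0\|=M$. As $Y$ is complete, $T_0$ extends uniquely to a bounded operator $T:X\to Y$ of the same norm $M$; I would note that $T$ is still a lattice homomorphism because the lattice operations are norm-continuous and $FVL(A)$ is dense in $X$, so the identities $T(f\vee g)=Tf\vee Tg$ and $T|f|=|Tf|$ pass to the limit. Uniqueness is immediate: any vector lattice homomorphism agreeing with $\kappa$ on $\iota(A)$ agrees with $T_0$ on the sublattice generated by $\{\delta_a\}$, which is all of $FVL(A)$, and hence with $T$ on the dense subspace $FVL(A)$, so on all of $X$.

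The main obstacle is the boundedness estimate $\|T_0f\|\le M\|f\|_F$; everything hinges on recognising that $\psi\circ T_0$ is exactly the kind of functional the norm $\|\cdot\|_F$ tests against, and that the $\|\cdot\|^\dag$-bound on it comes for free from $\psi\ge0$, $\|\psi\|\le1$ and $|T_0\delta_a|=|\kappa(a)|$. A secondary point worth stating carefully is that the continuous extension of a lattice homomorphism remains a lattice homomorphism, which I would justify by density together with the continuity of $\vee$, $\wedge$ and $|\cdot|$.
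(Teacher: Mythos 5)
Your proof is correct and follows essentially the same route as the paper's: both obtain the homomorphism from the free vector lattice property and establish the key norm estimate by contradiction, pulling back a norming positive functional $\psi$ to $\psi\circ T_0$ and observing that its $\|\cdot\|^\dag$-bound (coming from $|T_0\delta_a|=|\kappa(a)|$) makes it admissible in the supremum defining $\|\cdot\|_F$. Your version simply adds the routine bookkeeping that the paper leaves implicit --- working with general $M$ rather than normalizing $\kappa$ into the unit ball, checking that the continuous extension is still a lattice homomorphism, and verifying uniqueness.
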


%Later, in section \ref{smaller}, we will see that $FBL(A)$, as well
%as $FVL(A)$, is order isomorphic to a vector sublattice of $\Real^A$
%--- in fact quite a small one.

We will eventually need to know the relationship between different
free Banach lattices, so we record now the following result.

\begin{proposition}\label{sublattice}
If $B$ is a non-empty subset of $A$ then $FBL(B)$ is isometrically
order isomorphic to the closed sublattice of $FBL(A)$ generated by
$\{\delta_b:b\in B\}$. Furthermore there is a contractive lattice
homomorphic projection $P_B$ of $FBL(A)$ onto $FBL(B)$.
\begin{proof}
Recall from Proposition
\ref{subfree} that $FVL(B)$ is isomorphic to the sublattice of
$FVL(A)$ generated by $\{\delta_b:b\in B\}$ that there is a lattice
homomorphism projection $P_B$ of $FVL(A)$ onto $FVL(B)$ with
$P_B(\delta_b)=\delta_b$ if $b\in B$ and $P_B(\delta_a)=0$ if $a\in
A\setminus B$. As $\|\delta_b\|_F=1$ in both $FBL(A)$ and $FBL(B)$, there are contractive lattice homomorphisms of $FBL(B)$ into $FBL(A)$ and of $FBL(B)$ onto $FBL(A)$ which act into the same way on the generators so extend these. The conclusion is now clear.
\end{proof}
\end{proposition}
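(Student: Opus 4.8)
The plan is to build both maps directly from the universal property of free Banach lattices, rather than trying to compute the norm $\|\cdot\|_F$ explicitly. First I would produce an embedding. Since $\|\delta_b\|_F=1$ in $FBL(A)$ for each $b\in B$, the assignment $b\mapsto\delta_b$ is a bounded map of $B$ into $FBL(A)$, so the freeness of $FBL(B)$ furnishes a unique lattice homomorphism $J_B:FBL(B)\to FBL(A)$ with $J_B(\delta_b)=\delta_b$ and norm $\sup\{\|\delta_b\|_F:b\in B\}=1$. Dually, the map $A\to FBL(B)$ given by $b\mapsto\delta_b$ for $b\in B$ and $a\mapsto 0$ for $a\in A\setminus B$ is bounded, so the freeness of $FBL(A)$ gives a unique lattice homomorphism $P_B:FBL(A)\to FBL(B)$ with $P_B(\delta_b)=\delta_b$, $P_B(\delta_a)=0$ on $A\setminus B$, and norm equal to the supremum of the norms of the images of the generators, namely $1$. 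Thus both $J_B$ and $P_B$ are contractive.

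Next I would show $J_B$ is isometric. The composite $P_B\circ J_B:FBL(B)\to FBL(B)$ is a lattice homomorphism fixing every generator $\delta_b$, as does $\mathrm{id}_{FBL(B)}$; the uniqueness clause in the definition of $FBL(B)$ then forces $P_B\circ J_B=\mathrm{id}_{FBL(B)}$. For $x\in FBL(B)$ this gives the sandwich $\|x\|=\|P_B(J_Bx)\|\le\|J_Bx\|\le\|J_B\|\,\|x\|=\|x\|$, whence $\|J_Bx\|=\|x\|$. So $J_B$ is an isometric lattice isomorphism onto its range, and that range, being the isometric image of a complete space, is a closed sublattice of $FBL(A)$.

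Finally I would identify the range and assemble the projection. Because $\{\delta_b:b\in B\}$ generates $FBL(B)$ as a Banach lattice and $J_B$ is a continuous lattice homomorphism, $J_B\big(FBL(B)\big)$ is exactly the closed sublattice of $FBL(A)$ generated by $\{\delta_b:b\in B\}$: it contains these generators and is a closed sublattice, while conversely continuity carries the closed sublattice they generate in $FBL(B)$ onto the closed sublattice they generate in $FBL(A)$. This gives the first assertion. Identifying $FBL(B)$ with this sublattice through $J_B$, the relation $P_B\circ J_B=\mathrm{id}$ says $P_B$ restricts to the identity on the sublattice, so $P_B:FBL(A)\to FBL(B)$ is the asserted contractive lattice homomorphic projection (equivalently, $J_B\circ P_B$ is an idempotent contractive lattice homomorphism on $FBL(A)$ with range that sublattice).

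The argument is almost entirely formal, resting on the universal property and two uniqueness applications; it is the Banach-lattice counterpart of Proposition \ref{subfree}. The only place demanding care is the isometry of $J_B$, and the point of the sandwich argument is to obtain it without comparing the norms $\|\cdot\|_F$ on $FVL(B)$ and $FVL(A)$ directly---a comparison that is awkward because $\|f\|_F$ for $f\in FVL(B)$ is a supremum over functionals in $FVL(B)^\dag$, whereas inside $FVL(A)$ the same $f$ is normed by a supremum over the larger family $FVL(A)^\dag$. The composition-to-the-identity trick bypasses this issue completely.
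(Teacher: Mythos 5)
Your proposal is correct and follows essentially the same route as the paper: both use the universal property to produce the contractive lattice homomorphisms $J_B$ and $P_B$ determined on the generators, and the isometry and projection then follow from $P_B\circ J_B=\mathrm{id}_{FBL(B)}$. The only difference is one of exposition—the paper obtains the maps by extending those of Proposition \ref{subfree} and declares the conclusion "clear," whereas you spell out the uniqueness argument, the sandwich inequality giving the isometry, and the identification of the range.
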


There is also a simple relationship between their duals. 
This is a consequence of the following result which is surely well known but for which we can find no convenient reference, 
but see \cite{TL}, IV.12, Problem 6 and \cite{DS}, Lemma VI.3.3 for similar results.

\begin{proposition}
If $P$ is a contractive lattice homomorphism projection from a Banach lattice $X$ onto a closed sublattice $Y$ then $P^*X^*$ is a weak$^*$-closed band in $X^*$ which is isometrically order isomorphic to $Y^*$.
\begin{proof}
Write $\ker(P)$ for the kernel of $P$, which is a lattice ideal in $X$, and $Z=\{\phi\in X^*:\phi_{|\ker(P)}\equiv 0\}$, which is a weak$^*$-closed band in $X^*$. It is clear that $P^*X^*=Z$.

Define $J:Y^*\to X^*$ by $J\phi=\phi\circ P$ and note that $J:Y^*\to Z$ with $\|J\|\le \|P\|$. If $\phi\in Z$ then $J(\phi_{|Y})=\phi$ so that $J$ is actually an isometry of $Y^*$ onto $Z$. It is clear that both $J$ and $J^{-1}$ are positive. Thus $J:Y^*\to P^*X^*$ is actually an isometric order isomorphism.
\end{proof}
\end{proposition}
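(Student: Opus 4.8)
The plan is to realise $P^*X^*$ concretely as the annihilator of the kernel of $P$ and then to verify that this annihilator carries all the required structure. Write $K=\ker(P)$; since $P$ is a contraction its kernel is a closed subspace, and since $P$ is a lattice homomorphism $K$ is solid (if $Px=0$ and $|y|\le|x|$ then $|Py|=P|y|\le P|x|=|Px|=0$), so $K$ is a closed ideal. Set $Z=\{\phi\in X^*:\phi|_K\equiv 0\}$, the annihilator of $K$. I would organise the argument in three stages: (i) $P^*X^*=Z$; (ii) $Z$ is a weak$^*$-closed band; (iii) $Z$ is isometrically order isomorphic to $Y^*$.

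For (i) I would use only that $P$ is a bounded projection. Then $P^*$ is a bounded projection on $X^*$ whose range is $\{\phi:\phi\circ P=\phi\}$, that is, the set of $\phi$ annihilating every element of the form $x-Px$. Since $P(x-Px)=Px-P^2x=0$ we have $x-Px\in K$, and conversely every $y\in K$ equals $y-Py$; hence $\{x-Px:x\in X\}=K$ and $P^*X^*=Z$. For (ii), weak$^*$-closedness is automatic because $Z$ is an annihilator. That $Z$ is a \emph{band} is where the lattice hypotheses enter: because $K$ is an ideal, its annihilator is solid, the point being that for $\phi\in Z$ and $x\in K_+$ one has $|\phi|(x)=\sup\{\phi(y):|y|\le x\}=0$, since each such $y$ again lies in $K$; solidity of $Z$ follows, and a weak$^*$-closed solid subspace of the Dedekind complete space $X^*$ is then a band.

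For (iii) I would define $J:Y^*\to X^*$ by $J\phi=\phi\circ P$. Positivity of $J$ is inherited from that of $P$, and $\|J\phi\|\le\|P\|\,\|\phi\|\le\|\phi\|$; moreover $J\phi$ vanishes on $K$, so $J$ maps $Y^*$ into $Z$. The two facts that pin everything down are that $P$ is a projection \emph{onto} $Y$, so $Py=y$ for $y\in Y$, and that $P$ is contractive. The first gives that restriction to $Y$ is a left inverse for $J$, since $(J\psi)|_Y=\psi$ for $\psi\in Y^*$; combined with the identity $J(\phi|_Y)=\phi$ valid for $\phi\in Z$ (because $\phi(x)-\phi(Px)=\phi(x-Px)=0$), this shows that $J$ is a bijection of $Y^*$ onto $Z$ whose inverse is restriction. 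Finally the chain $\|\psi\|=\|(J\psi)|_Y\|\le\|J\psi\|\le\|\psi\|$ forces $J$ to be an isometry, and as both $J$ and its inverse are positive, $J$ is an isometric order isomorphism of $Y^*$ onto $P^*X^*$.

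I expect the only genuinely delicate step to be the band assertion in (ii): the isometry and bijectivity of $J$ are forced mechanically by $\|P\|\le1$ together with $P|_Y=\mathrm{id}$, and the identification in (i) is pure operator theory, but establishing that $Z$ is a band (rather than merely a weak$^*$-closed subspace) is exactly where one must exploit that $\ker(P)$ is solid and appeal to the behaviour of suprema of upward directed nets in the order dual $X^*$.
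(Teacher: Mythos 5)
Your proposal is correct and follows essentially the same route as the paper: identify $P^*X^*$ with the annihilator $Z$ of $\ker(P)$, note that this annihilator of a closed ideal is a weak$^*$-closed band, and show that $J\phi=\phi\circ P$ is a positive isometric bijection of $Y^*$ onto $Z$ with positive inverse given by restriction. The only difference is that you spell out the details (the identification $P^*X^*=Z$, the solidity of $Z$, and the isometry chain) that the paper's proof leaves as assertions.
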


\begin{corollary}\label{dualembedding}If $B$ is a non-empty subset of $A$ then $FBL(B)^*$ is isometrically order isomorphic to a weak$^*$-closed band in $FBL(A)^*$.
\end{corollary}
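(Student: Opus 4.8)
The plan is to read this off directly from the two preceding propositions, since the corollary is exactly their composition. First I would invoke Proposition \ref{sublattice}, which supplies two pieces of data: an identification of $FBL(B)$ with the closed sublattice $Y$ of $FBL(A)$ generated by $\{\delta_b:b\in B\}$ (via an isometric order isomorphism), and a contractive lattice homomorphic projection $P_B:FBL(A)\to FBL(A)$ whose range is precisely $Y$. These are exactly the hypotheses required by the immediately preceding proposition.

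Next I would apply that proposition with $X=FBL(A)$, $Y$ the said closed sublattice, and $P=P_B$. Its conclusion is that $P_B^*X^*=P_B^*FBL(A)^*$ is a weak$^*$-closed band in $FBL(A)^*$, and that the map $J:Y^*\to P_B^*FBL(A)^*$ given by $J\phi=\phi\circ P_B$ is an isometric order isomorphism. Composing $J$ with the dual of the isometric order isomorphism $FBL(B)\to Y$ from Proposition \ref{sublattice} transports this identification back to $FBL(B)^*$: since the dual of an isometric order isomorphism is again an isometric order isomorphism, the composite carries $FBL(B)^*$ isometrically and in an order-preserving fashion onto the weak$^*$-closed band $P_B^*FBL(A)^*\subseteq FBL(A)^*$. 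That is precisely the assertion of the corollary.

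There is no substantive obstacle here; the only thing to verify is that the hypotheses of the preceding proposition are genuinely met, namely that $P_B$ is \emph{contractive}, a \emph{lattice homomorphism}, a \emph{projection}, and has a \emph{closed sublattice} as its range. All four are delivered verbatim by Proposition \ref{sublattice}, so the argument is a direct specialization with no additional estimates or constructions required.

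\begin{proof}
By Proposition \ref{sublattice} there is a contractive lattice homomorphic projection $P_B$ of $FBL(A)$ onto the closed sublattice $Y$ of $FBL(A)$ generated by $\{\delta_b:b\in B\}$, and $Y$ is isometrically order isomorphic to $FBL(B)$. Applying the preceding proposition with $X=FBL(A)$ and $P=P_B$, we conclude that $P_B^*FBL(A)^*$ is a weak$^*$-closed band in $FBL(A)^*$ which is isometrically order isomorphic to $Y^*$, and hence to $FBL(B)^*$.
\end{proof}
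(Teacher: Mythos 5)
Your proposal is correct and is exactly the argument the paper intends: the corollary is stated without proof precisely because it is the immediate specialization of the preceding proposition (with $X=FBL(A)$, $P=P_B$, $Y$ the closed sublattice generated by $\{\delta_b:b\in B\}$) combined with the identification from Proposition \ref{sublattice}. Your only addition — noting that the dual of the isometric order isomorphism $FBL(B)\to Y$ transports $Y^*$ back to $FBL(B)^*$ — is a routine step the paper leaves implicit.
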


As in the algebraic case, if $B$ and $C$ are two  subsets of $A$
with $B\cap C\ne \emptyset$ of $A$ then $P_B P_C=P_C P_B=P_{B\cap
C}$.

In particular, the embedding of the finitely generated free closed
sublattices are important.

\begin{proposition}\label{finitestrong}Let $\mathcal{F}(A)$ be the collection of all non-empty 
finite subsets of $A$, ordered by inclusion. The net of projections
$\{P_B:B\in\mathcal{F}(A)\}$ in $FBL(A)$ converges strongly to the
identity in $FBL(A)$.
\begin{proof}
If $f\in FVL(A)$ then there is actually $B_0\in \mathcal{F}(A)$ with
$P_B(f)=f$ whenever $B_0\subset B$. Recall that each $P_B$ is a
contraction. If $\epsilon>0$ and $f\in FBL(A)$, choose $f'\in
FVL(A)$ with $\|f-f'\|_F<\epsilon/2$ and then $B_0\in
\mathcal{F}(A)$ with $P_B(f')=f'$ for $B_0\subset B$. Then if
$B_0\subset B$ then
\[\|P_Bf-f\|_F\le
\|P_Bf-P_Bf'\|+\|P_Bf'-f'\|_F+\|f'-f\|_F<\epsilon,\] which completes
the proof.
\end{proof}
\end{proposition}

Before looking at some properties of $FBL(A)$ in detail, we will ask
about its normed dual.

\begin{proposition}\label{whatisdual}If $A$ is any non-empty set then the three normed spaces $(FVL(A)^\dag, \|\cdot\|^\dag)$, $(FVL(A),||\cdot\|_F)^*$ and $FBL(A)^*$ are isometrically order isomorphic.
\begin{proof} If $\phi\in FBL(A)^*$ then the restriction map $\phi\mapsto \phi_{|FVL(A)}$ is an order isomorphism, by continuity,
and as $\|\delta_a\|=1$ we have $|\phi|(\delta_a)\|\le \|\phi\|$ so that $\|\phi_{|FVL(A)}\|^\dag\le \|\phi\|$.
On the other hand, as each $\|\delta_a\|=1$ we see that
\begin{align*}
\|\phi\|=\big\||\phi|\big\|&=\sup\{|\phi|(f):\|f\|_F\le 1\}\\
&\le \sup\{|\phi|(|\delta_a|):a\in A\}=\|\phi_{|FVL(A)|}\|^\dag
\end{align*}
so the isometric order isomorphism of the first and third spaces is proved. The identification of the second and third follows from the density of $FVL(A)$ in $FBL(A)$.
\end{proof}
\end{proposition}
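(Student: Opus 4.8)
The plan is to identify all three spaces through the natural restriction map. First, since $FBL(A)$ is by definition the $\|\cdot\|_F$-completion of $FVL(A)$ (Theorem \ref{freeexists}), any $\|\cdot\|_F$-bounded linear functional on $FVL(A)$ extends uniquely by continuity to $FBL(A)$, and this extension is isometric and preserves order. Hence $(FVL(A),\|\cdot\|_F)^*$ and $FBL(A)^*$ are isometrically order isomorphic, and I may concentrate on comparing $FVL(A)^\dag$ with $FBL(A)^*$ through the restriction map $R\colon\phi\mapsto\phi_{|FVL(A)}$.

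I would first check that $R$ is positive, injective (the latter because $FVL(A)$ is $\|\cdot\|_F$-dense in $FBL(A)$), and contractive into $(FVL(A)^\dag,\|\cdot\|^\dag)$. The contraction estimate is a short computation: for each $a$ one has $|R\phi|(|\delta_a|)\le|\phi|(|\delta_a|)\le\||\phi|\|\,\||\delta_a|\|_F=\|\phi\|$, using $\|\delta_a\|_F=1$ and the Banach-lattice dual identity $\|\phi\|=\||\phi|\|$; taking the supremum over $a$ gives $\|R\phi\|^\dag\le\|\phi\|$.

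The crux is the reverse inequality $\|\phi\|\le\|R\phi\|^\dag$, and this is exactly where the definition of $\|\cdot\|_F$ does its work. Writing $c=\|R\phi\|^\dag$, if $c>0$ then $c^{-1}|\phi|$ is a positive functional of dagger-norm at most $1$, so by the very definition of $\|f\|_F$ as a supremum over such functionals one gets $c^{-1}|\phi|(|f|)\le\|f\|_F$ for $f\in FVL(A)$, hence by continuity for all $f\in FBL(A)$. Combining this with $\|\phi\|=\||\phi|\|=\sup\{|\phi|(f):\|f\|_F\le1\}$ yields $\|\phi\|\le c$; the degenerate case $c=0$ forces $|\phi|=0$ by the earlier remark that a positive functional vanishing on every $|\delta_a|$ must vanish, and is trivial. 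This establishes that $R$ is an isometry.

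Finally, for surjectivity I would note that any $\psi\in FVL(A)^\dag$ satisfies $|\psi(f)|\le|\psi|(|f|)\le\|\psi\|^\dag\,\|f\|_F$, again directly from the definition of $\|\cdot\|_F$, so $\psi$ is $\|\cdot\|_F$-bounded and is the restriction of its continuous extension; positivity of that extension (the positive cone of $FBL(A)$ being the $\|\cdot\|_F$-closure of that of $FVL(A)$) shows $R^{-1}$ is positive. Thus $R$ is an isometric order isomorphism, which together with the first paragraph completes the identification. I expect the reverse norm inequality to be the only genuine obstacle; everything else is a routine density or positivity check.
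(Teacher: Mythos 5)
Your proof is correct and takes essentially the same approach as the paper's: the restriction map, the contraction estimate from $\|\delta_a\|_F=1$, and the reverse inequality read off directly from the definition of $\|\cdot\|_F$ as a supremum over positive functionals of dagger-norm at most one, with the completion/density argument identifying $(FVL(A),\|\cdot\|_F)^*$ with $FBL(A)^*$. You simply make explicit some steps the paper compresses, namely the normalization by $c=\|R\phi\|^\dag$, the degenerate case $c=0$, and the surjectivity of the restriction onto $FVL(A)^\dag$.
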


As we noted above, if $A$ is infinite then $FVL(A)^\dag\ne FVL(A)^\sim$. On the other hand, we have:

\begin{proposition}If $n\in\Natural$ then $FBL(n)^*$ is isometrically order isomorphic to the whole of $FVL(n)^\sim$ under
the norm $\|\cdot\|^\dag$.
\begin{proof}All that remains to establish is that $\|\phi\|^\dag$ is finite for all $\phi\in FVL(n)$. Given that
$\|\phi\|^\dag$ is, in this case, a finite supremum of real values $|\phi|(|\delta_a|)$, this is clear.
\end{proof}
\end{proposition}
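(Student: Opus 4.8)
The plan is to leverage Proposition \ref{whatisdual}, which already identifies $FBL(A)^*$ isometrically and in order with $(FVL(A)^\dag, \|\cdot\|^\dag)$ for every non-empty set $A$. Applying this with $A$ a set of cardinality $n$, the only gap between that general statement and the present claim is the difference between the ideal $FVL(n)^\dag$ and the full order dual $FVL(n)^\sim$. Thus I would reduce the whole proposition to verifying the single inclusion $FVL(n)^\sim \subseteq FVL(n)^\dag$, that is, that every order-bounded functional already satisfies $\|\phi\|^\dag < \infty$; the reverse inclusion holds by the very definition of $FVL(n)^\dag$.

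To establish this, I would recall that $FVL(n)^\sim$ is a Dedekind complete vector lattice, so for any $\phi \in FVL(n)^\sim$ the modulus $|\phi|$ exists and is a positive functional, hence finite-valued on each individual element. Since $|\delta_a| \in FVL(n)_+$ is a fixed positive element for each of the $n$ generators, each quantity $|\phi|(|\delta_a|)$ is a well-defined finite real number. Because $A$ is finite, the defining supremum $\|\phi\|^\dag = \sup\{|\phi|(|\delta_a|) : a \in A\}$ is a maximum over finitely many finite reals and is therefore finite. This yields $FVL(n)^\dag = FVL(n)^\sim$, and combining with Proposition \ref{whatisdual} completes the argument.

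There is essentially no obstacle here: the entire content is the observation that finiteness of $\|\cdot\|^\dag$, which can genuinely fail on infinite generating sets (as noted for unbounded $\xi \in \Real^A$ just before the construction of $\|\cdot\|_F$), becomes automatic once the supremum ranges over only finitely many terms. The sharp contrast with the infinite case, where $FVL(A)^\dag$ is a \emph{proper} ideal in $FVL(A)^\sim$, is exactly what makes the coincidence $FVL(n)^\dag = FVL(n)^\sim$ a genuinely finite-generation phenomenon rather than a feature of the general construction.
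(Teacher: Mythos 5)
Your proposal is correct and follows exactly the paper's own route: the paper likewise reduces the claim to Proposition \ref{whatisdual} and observes that $\|\phi\|^\dag$ is automatically finite for every $\phi\in FVL(n)^\sim$ because it is a supremum of finitely many real values $|\phi|(|\delta_a|)$. Your version merely spells out the details (Dedekind completeness of the order dual giving the existence and finiteness of $|\phi|$ on each element) that the paper leaves implicit.
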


\section{A smaller representation space}

The set $\Delta _|=[ -1,1]^A$ is a compact subset of $\Real^A$. We call a function $f:\Delta _A\rightarrow \Real$
\emph{homogeneous} if $f( t\xi) =tf( \xi)$ for $\xi \in \Delta _A$ and $t\in [ 0,1] $ (this is consistent with
the definition for functions on $\Real^A$). The space of continuous
homogeneous real-valued functions on $\Delta _A$ is denoted by $H(\Delta _A) $. If we equip $C( \Delta _A) $ with the
supremum norm $\left\Vert \cdot \right\Vert _{\infty }$, then $H\left(
\Delta _{A}\right) $ is a closed vector sublattice of $C\left( \Delta
_{A}\right) $ (and hence $H\left( \Delta _{A}\right) $ is itself a Banach
lattice with respect to this norm).

\begin{lemma}
\label{BLem01}The restriction map $R:H\left( \mathbb{R}^{A}\right)
\rightarrow H\left( \Delta _{A}\right) $ is a injective vector lattice
homomorphism.
\end{lemma}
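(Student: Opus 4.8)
The plan is to check the three assertions of the lemma separately — well-definedness, the homomorphism property, and injectivity — expecting the last to carry essentially all of the content.

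First, for well-definedness I would observe that $R(f)=f_{|\Delta_A}$ is continuous because $\Delta_A$ carries the subspace topology inherited from $\Real^A$, so the restriction of a continuous function stays continuous; and that $R(f)$ is homogeneous on $\Delta_A$, since for $\xi\in\Delta_A$ and $t\in[0,1]$ one has $|t\xi(a)|=t|\xi(a)|\le 1$, whence $t\xi\in\Delta_A$, and then $f(t\xi)=tf(\xi)$ by the homogeneity of $f$ on $\Real^A$. Thus $R$ does map $H(\Real^A)$ into $H(\Delta_A)$. Second, the lattice-homomorphism property is immediate: the vector lattice operations on both $H(\Real^A)$ and $H(\Delta_A)$ are the pointwise ones, and restriction to a subset commutes with pointwise addition, scalar multiplication, suprema and infima, so $R(f\vee g)=R(f)\vee R(g)$ and likewise for the other operations.

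The substance is injectivity, and the obstacle is exactly the phenomenon flagged earlier in the paper: when $A$ is infinite a point $\xi\in\Real^A$ may be unbounded, so one cannot simply rescale $\xi$ into the cube $\Delta_A$ by a single scalar. Suppose $R(f)=0$, i.e. $f$ vanishes on $\Delta_A$; I want to deduce $f(\xi)=0$ for every $\xi\in\Real^A$. The key idea is to reduce to bounded points by truncating the support. For a non-empty finite $B\subseteq A$ the truncation $\xi\chi_B$ is supported on the finite set $B$, hence bounded, and I set $M=\max_{a\in B}|\xi(a)|$. If $M=0$ then $\xi\chi_B=0$ and $f(\xi\chi_B)=0$ by homogeneity (take $t=0$); if $M>0$ then $M^{-1}\xi\chi_B\in\Delta_A$, so homogeneity on $\Real^A$ gives $f(\xi\chi_B)=Mf(M^{-1}\xi\chi_B)=0$ because $f$ vanishes on $\Delta_A$. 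Hence $f(\xi\chi_B)=0$ for every finite $B$. Finally, ordering the non-empty finite subsets $B$ by inclusion, the net $(\xi\chi_B)$ converges to $\xi$ in the product topology — for each fixed $a\in A$, eventually $a\in B$, and then the $a$-coordinate of $\xi\chi_B$ equals $\xi(a)$ — so continuity of $f$ in that topology yields $f(\xi)=\lim_B f(\xi\chi_B)=0$. Therefore $f=0$ and $R$ is injective.

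I expect the only genuine subtlety to be this passage from the finite truncations to the full point via net convergence in the product topology; everything else is routine bookkeeping. It is worth emphasizing that this final net argument is precisely where continuity (rather than mere homogeneity) of the functions is used, and that the reduction to finitely supported truncations runs parallel to the reduction to finitely many generators recorded in Proposition \ref{freefinite}.
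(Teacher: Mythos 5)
Your proof is correct and follows essentially the same route as the paper's: the paper also dismisses well-definedness and the homomorphism property as trivial, and proves injectivity by considering the net $\{\xi\chi_B : B\in\mathcal{F}(A)\}$ of finite truncations, scaling each $\xi\chi_B$ into $[-1,1]^A$ to conclude $f(\xi\chi_B)=0$ by homogeneity, and then passing to the limit $f(\xi)=0$ by continuity in the product topology. The only cosmetic difference is that you split off the case $\xi\chi_B=0$ explicitly, whereas the paper's choice of an arbitrary scalar $t>0$ covers it implicitly.
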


\begin{proof}
The only part of the proof that is not completely trivial is that the map $R$
is injective. Suppose that $f\in H\left( \mathbb{R}^{A}\right) $ and $Rf=0$.
If $\xi \in \mathbb{R}^{A}$, consider the net $\left\{ \xi \chi _{B}:B\in
\mathcal{F}\left( A\right) \right\} $, where $\mathcal{F}\left( A\right) $
is the collection of all non-empty finite subsets of $A$ ordered by
inclusion, then we have $\xi \chi _{B}\rightarrow _{\mathcal{F}\left(
A\right) }\xi $ in $\mathbb{R}^{A}$. For any $B\in \mathcal{F}\left(
A\right) $, there is $t>0$ such that $t\xi \chi _{B}\in \left[ -1,1\right]
^{A}$, so that $tf\left( \xi \chi _{B}\right) =f\left( t\xi \chi _{B}\right)
=0$ by homogeneity. Hence, $f\left( \xi \chi _{B}\right) =0$ and so $f\left(
\xi \right) =0$ by the continuity of $f$, so that $f=0$.
\end{proof}

It should be noted that the restriction map is not surjective unless $A$ is
a finite set.

\begin{example}
It suffices to prove the non-surjectiveness in the case that $A=\mathbb{N}$.
Define $g\in H\left( \Delta _{\mathbb{N}}\right) $ by $g\left( \xi \right)
=\sum_{k=1}^{\infty }2^{-k}\xi \left( k\right) $ for $\xi \in \Delta _{%
\mathbb{N}}$. Suppose that there is $f\in H\left( \mathbb{R}^{\mathbb{N}%
}\right) $ with $Rf=g$. Define $\eta \in \mathbb{R}^{\mathbb{N}}$ by $\eta
\left( k\right) =2^{k}$ and let $\eta _{n}=\eta \chi _{\left\{ 1,\ldots
,n\right\} }$, for $n\in \mathbb{N}$, so that $\eta _{n}\rightarrow \eta $
in $\mathbb{R}^{\mathbb{N}}$. But, for each $n\in \mathbb{N}$ we have
\[
f\left( \eta _{n}\right) =2^{n}f\left( 2^{-n}\eta _{n}\right) =2^{n}g\left(
2^{-n}\eta _{n}\right) =n.
\]%
As $f$ is supposed to be continuous, this is impossible.
\end{example}

Note that this example also shows that the space $H\left( \mathbb{R}%
^{A}\right) $, equipped with the sup-norm over $\Delta _{A}$, is not
complete if $A$ is infinite. This is one of the reasons that we shall use
the space $H\left( \Delta _{A}\right) $.

In general, $FVL\left( A\right) $ may be identified with a vector sublattice
of $H\left( \mathbb{R}^{A}\right) $ (see Theorem \ref{FVLexists}), which in turn,
courtesy of Lemma \ref{BLem01}, may be identified with a vector sublattice
of $H\left( \Delta _{A}\right) $ via the restriction map $R$. This
identification extends to $FBL\left( A\right) $. The proof of this turns out
to be slightly more tricky than might have been anticipated.

For sake of convenience, we denote by $J=J_{A}$ the restriction to $%
FVL\left( A\right) $ of the restriction map $R:H\left( \mathbb{R}^{A}\right)
\rightarrow H\left( \Delta _{A}\right) $. Since $\left\Vert J\delta
_{a}\right\Vert _{\infty }=1$ for all $a\in A$, it is clear that $\left\Vert
J\right\Vert =1$ and so, $\left\Vert Jf\right\Vert _{\infty }\leq \left\Vert
f\right\Vert _{F}$ for all $f\in FVL\left( A\right) $. Since $H\left( \Delta
_{A}\right) $ is a Banach lattice with respect to $\left\Vert \cdot
\right\Vert _{\infty }$, $J$ extends by continuity to a lattice homomorphism
$J:FBL\left( A\right) \rightarrow H\left( \Delta _{A}\right) $ with $%
\left\Vert J\right\Vert =1$. Note that, by the universal property of $%
FBL\left( A\right) $, $J$ is the unique lattice homomorphism from $FBL\left(
A\right) $ into $H\left( \Delta _{A}\right) $ satisfying $J\delta
_{a}=\delta _{a|\Delta _{A}}$, $a\in A$. This implies, in particular,
that if $B$ is a non-empty subset of $A$, then $J_{B}$ is the restriction of
$J_{A}$ to $FBL\left( B\right) $ (cf. Proposition \ref{sublattice}). The problem is to
show that this extension $J$ is injective.

First, we consider the situation that $A$ is finite, in which case
everything is very nice indeed.

\begin{proposition}
\label{BProp01}For any non-empty finite set $A$, the map $J:FBL\left(
A\right) \rightarrow H\left( \Delta _{A}\right) $ is a surjective norm and
lattice isomorphism.
\end{proposition}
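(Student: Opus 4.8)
The plan is to prove that, on $FVL(A)$, the two norms $\|\cdot\|_F$ and $f\mapsto\|Jf\|_\infty$ are equivalent, and that $J\big(FVL(A)\big)$ is dense in $H(\Delta_A)$; together these will yield that $J$ extends to a surjective norm and lattice isomorphism. One of the two norm inequalities, namely $\|Jf\|_\infty\le\|f\|_F$, has already been recorded, so the substance of the argument is the reverse inequality, and this is where I expect the real work to lie.

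For the reverse inequality I would exploit homogeneity. Every $f\in FVL(A)\subseteq H(\Real^A)$ is homogeneous, so for $\xi\ne 0$ we have $f(\xi)=\|\xi\|_\infty\,f\big(\xi/\|\xi\|_\infty\big)$ with $\xi/\|\xi\|_\infty\in\Delta_A$; hence $|f(\xi)|\le\|Jf\|_\infty\,\|\xi\|_\infty$ for every $\xi\in\Real^A$. Since $\|\xi\|_\infty=\big(\bigvee_{a\in A}|\delta_a|\big)(\xi)$, this says precisely that
\[|f|\le\|Jf\|_\infty\bigvee_{a\in A}|\delta_a|\le\|Jf\|_\infty\sum_{a\in A}|\delta_a|\]
as elements of $FVL(A)$. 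Now if $\phi\in FVL(A)^\dag_+$ with $\|\phi\|^\dag\le 1$ then $\phi(|\delta_a|)\le 1$ for each $a$, so $\phi\big(\sum_{a}|\delta_a|\big)\le\card A$, and therefore $\phi(|f|)\le(\card A)\|Jf\|_\infty$. Taking the supremum over all such $\phi$ gives $\|f\|_F\le(\card A)\|Jf\|_\infty$, completing the norm equivalence.

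For density I would first identify $H(\Delta_A)$ isometrically, as a Banach lattice, with $C(S)$, where $S=\{\xi\in\Delta_A:\|\xi\|_\infty=1\}$ is the (compact) boundary of the cube: a homogeneous continuous function on $\Delta_A$ is determined by, and recovered from, its restriction to $S$, and this restriction is an isometric lattice isomorphism. Under this identification $J\big(FVL(A)\big)$ becomes the linear sublattice of $C(S)$ generated by the coordinate functions $\xi\mapsto\xi(a)$. These separate the points of $S$, and the element $\bigvee_{a}|\delta_a|$ restricts to the constant function $1$ on $S$ (since $\|\xi\|_\infty=1$ there); a sublattice of $C(S)$ that separates points and contains the constants is dense by the lattice form of the Stone--Weierstrass theorem.

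Finally I would assemble the pieces. The norm equivalence shows that $J$ extends from $FVL(A)$ to a bounded-below, hence injective, lattice homomorphism of the completion $FBL(A)$ onto a \emph{closed} subspace of $H(\Delta_A)$; since $J\big(FVL(A)\big)\subseteq J\big(FBL(A)\big)$ is already dense, this closed subspace must be all of $H(\Delta_A)$. Thus $J$ is a bijective lattice homomorphism with continuous inverse, that is, a surjective norm and lattice isomorphism. The one genuinely delicate point is the reverse norm inequality $\|f\|_F\le(\card A)\|Jf\|_\infty$: it is crucial there that the supremum defining $\|\cdot\|_F$ ranges over \emph{all} positive functionals of $\dag$-norm at most one, not merely the lattice homomorphisms (point evaluations, by Lemma \ref{pointeval}), and the bound $\bigvee_a|\delta_a|\le\sum_a|\delta_a|$ is exactly what controls the non-multiplicative functionals.
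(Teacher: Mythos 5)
Your proof is correct and follows essentially the same route as the paper: the same key pointwise estimate $|f|\le\|Jf\|_\infty\bigvee_{a\in A}|\delta_a|$ yielding $\|f\|_F\le(\card A)\|Jf\|_\infty$, and the same surjectivity argument via restriction to the sphere $S=\{\xi\in\Delta_A:\|\xi\|_\infty=1\}$ and the lattice Stone--Weierstrass theorem. If anything, you are more careful than the paper in two spots: you justify the pointwise estimate explicitly by homogeneity, and you note that $\bigvee_a|\delta_a|$ restricts to the constant $1$ on $S$, which the lattice form of Stone--Weierstrass genuinely requires beyond mere point separation.
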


\begin{proof}
We claim that $\left\Vert f\right\Vert _{F}\leq n\left\Vert Jf\right\Vert
_{\infty }$, $f\in FVL\left( A\right) $, where $n$ is the cardinality of $A$. Indeed, if $f\in FVL\left( A\right)
$, then
\[
\left\vert f\right\vert \leq \left\Vert Jf\right\Vert _{\infty
}\bigvee\nolimits_{a\in A}\left\vert \delta _{a}\right\vert
\]%
and so,
\[
\left\Vert f\right\Vert _{F}\leq \left\Vert Jf\right\Vert _{\infty
}\left\Vert \bigvee\nolimits_{a\in A}\left\vert \delta _{a}\right\vert
\right\Vert _{F}\leq \left\Vert Jf\right\Vert _{\infty }\sum\nolimits_{a\in
A}\left\Vert \delta _{a}\right\Vert _{F}=n\left\Vert Jf\right\Vert _{\infty
}.
\]%
This proves the claim. Consequently, $\left\Vert Jf\right\Vert _{\infty
}\leq \left\Vert f\right\Vert _{F}\leq n\left\Vert Jf\right\Vert _{\infty }$%
, $f\in FVL\left( A\right) $, which implies that $J:FBL\left( A\right)
\rightarrow H\left( \Delta _{A}\right) $ is a norm and lattice isomorphism.
It remains to be shown that $J$ is surjective. For this purpose, denote by $%
S_{A}$ the compact subset of $\Delta _{A}$ given by $S_{A}=\left\{ \xi \in
\Delta _{A}:\left\Vert \xi \right\Vert _{A}=1\right\} $. Since $A$ is
finite, the restriction map $r:H\left( \Delta _{A}\right) \rightarrow
C\left( S_{A}\right) $ is a surjective norm and lattice isomorphism. Since
the functions $\left\{ \delta _{a|S_{A}}:a\in A\right\} $ separate
the points of $S_{A}$, it follows via the Stone-Weierstrass theorem that $%
\left( r\circ J\right) \left( FBL\left( A\right) \right) =C\left(
S_{A}\right) $ and hence $J\left( FBL\left( A\right) \right) =H\left(
\Delta _{A}\right) $. The proof is complete.
\end{proof}

This norm isomorphism is not an isometry unless $n=1$. In fact, if $%
a_{1},\ldots ,a_{n}\in A$ are distinct, then $\left\Vert
\bigvee\nolimits_{j=1}^{n}\left\vert \delta _{a_{j}}\right\vert \right\Vert
_{F}=n$ (indeed, consider the lattice homomorphism $T:FBL\left( A\right)
\rightarrow \ell _{1}^{n}$ satisfying $T\left( \delta _{a_{j}}\right) =e_{j}$%
, $1\leq j\leq n$, where $e_{j}$ denotes the $j$-th unit vector in $\ell _{1}^{n}$).

Sometimes it will be convenient to use the following, slightly weaker,
description.

\begin{corollary}
For any non-empty finite set $A$, $FBL\left( A\right) $ is linearly order
isomorphic to $H\left( \mathbb{R}^{A}\right) $.
\end{corollary}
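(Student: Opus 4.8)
The plan is to obtain the isomorphism by combining the identification already in hand with one between the two homogeneous-function spaces. By Proposition~\ref{BProp01}, for finite $A$ the map $J:FBL(A)\to H(\Delta_A)$ is a surjective lattice isomorphism, hence in particular a linear order isomorphism of $FBL(A)$ onto $H(\Delta_A)$. It therefore suffices to exhibit a linear order isomorphism between $H(\Delta_A)$ and $H(\Real^A)$. The natural candidate is the restriction map $R:H(\Real^A)\to H(\Delta_A)$ of Lemma~\ref{BLem01}, which is already known to be an injective vector lattice homomorphism; what remains is to show that $R$ is surjective when $A$ is finite.

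To prove surjectivity I would extend a given $g\in H(\Delta_A)$ by homogeneity. For $\xi\in\Real^A$ choose any $t\ge\|\xi\|_A$, so that $\xi/t\in\Delta_A$, and set $f(\xi)=t\,g(\xi/t)$. Because $A$ is finite, $\|\xi\|_A<\infty$ for every $\xi$, so $f$ is defined on all of $\Real^A$; the homogeneity of $g$ makes the value independent of the choice of $t$ and shows that $f$ is homogeneous with $Rf=g$. Granting that $f$ is continuous (see below), $R$ is a bijective vector lattice homomorphism, and therefore a vector lattice isomorphism, since the inverse of a bijective lattice homomorphism automatically preserves the lattice operations. The composition $R^{-1}\circ J:FBL(A)\to H(\Real^A)$ is then the required linear order isomorphism.

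The one step needing care --- and the only place finiteness of $A$ is used --- is the continuity of the extension $f$. The hard part is really just this: near any fixed $\xi_0$ one can pick a single $t$ large enough to serve for an entire neighbourhood of $\xi_0$, and on that neighbourhood $f$ coincides with $\xi\mapsto t\,g(\xi/t)$, which is continuous because $g$ is. Finiteness of $A$ is exactly what guarantees that $\|\cdot\|_A$ is locally bounded, so such a uniform $t$ exists; this is precisely what fails in the infinite case, where the earlier example exhibits a $g\in H(\Delta_{\Natural})$ with no continuous homogeneous extension to $\Real^{\Natural}$.
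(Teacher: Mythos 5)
Your proposal is correct and takes essentially the same route as the paper: compose the isomorphism $J:FBL(A)\to H(\Delta_A)$ of Proposition~\ref{BProp01} with the inverse of the restriction map $R$ of Lemma~\ref{BLem01}, whose surjectivity for finite $A$ is exactly what the paper's one-line proof ``observes''. Your homogeneous-extension argument (with continuity obtained from local boundedness of the sup norm, which holds precisely because $A$ is finite) simply supplies the details of that observation.
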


\begin{proof}
We only need to observe that the restriction map $R:H\left( \mathbb{R}%
^{A}\right) \rightarrow H\left( \Delta _{A}\right) $ is onto whenever $A$ is
finite.
\end{proof}

To show that the lattice homomorphism $J:FBL\left( A\right) \rightarrow
H\left( \Delta _{A}\right) $ is injective in general, we will make use of
real-valued linear lattice homomorphisms on $FBL\left( A\right) $ in the
course of proving this and it will later allow us to characterize these in
general, which must be worth knowing anyway!

\begin{theorem}
\label{BThm01}If $A$ is a non-empty set, then $\omega :FBL\left( A\right)
\rightarrow \mathbb{R}$ is a lattice homomorphism if and only if there
exists $\xi \in \Delta _{A}$ and $0\leq \lambda \in \mathbb{R}$ such that $%
\omega \left( f\right) =\lambda Jf\left( \xi \right) $ for all $f\in
FBL\left( A\right) $.
\end{theorem}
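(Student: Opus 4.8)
The plan is to dispose of the ``if'' direction quickly and concentrate on the converse. For the easy direction, note that evaluation at a fixed point $\xi\in\Delta_A$ is a real-valued lattice homomorphism on $H(\Delta_A)$, that $J:FBL(A)\to H(\Delta_A)$ is a lattice homomorphism, and that multiplication by $\lambda\ge 0$ preserves the lattice operations; hence $f\mapsto \lambda\,Jf(\xi)$ is a composition of lattice homomorphisms and so is one itself.

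For the converse, suppose $\omega:FBL(A)\to\Real$ is a lattice homomorphism. First I would observe that $\omega$ is positive (since $\omega(f^+)=\omega(f)^+\ge 0$) and hence norm continuous, being a positive linear functional on a Banach lattice. Its restriction $\omega_{|FVL(A)}$ is then a real-valued vector lattice homomorphism on $FVL(A)$, so Lemma \ref{pointeval} supplies $\zeta\in\Real^A$ with $\omega(f)=\omega_\zeta(f)=f(\zeta)$ for all $f\in FVL(A)$.

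The crucial step is to show that $\zeta$ is bounded, so that after rescaling it lands in $\Delta_A=[-1,1]^A$. Since $\omega$ is $\|\cdot\|_F$-continuous and $FVL(A)$ is dense in $FBL(A)$, the functional $\omega_\zeta$ belongs to $FBL(A)^*$, which by Proposition \ref{whatisdual} is isometrically order isomorphic to $(FVL(A)^\dag,\|\cdot\|^\dag)$. As $\omega_\zeta$ is positive, $\|\omega_\zeta\|^\dag=\sup_{a\in A}\omega_\zeta(|\delta_a|)=\sup_{a\in A}|\zeta(a)|<\infty$, so $\zeta$ is bounded. Set $\lambda=\sup_{a\in A}|\zeta(a)|$.

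Finally I would assemble the formula. If $\lambda=0$ then $\zeta=0$; since every element of $FVL(A)\subset H(\Real^A)$ is homogeneous and therefore vanishes at the origin, $\omega_\zeta=0$ on $FVL(A)$, and by density $\omega=0$, which is of the required form with $\lambda=0$. If $\lambda>0$, put $\xi=\lambda^{-1}\zeta$, so that $\xi\in\Delta_A$. For $f\in FVL(A)$, homogeneity of $f$ gives $\omega(f)=f(\zeta)=f(\lambda\xi)=\lambda f(\xi)=\lambda\,Jf(\xi)$, using that $Jf$ is the restriction of $f$ to $\Delta_A$ (Lemma \ref{BLem01} and Theorem \ref{FVLexists}). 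Both $\omega$ and $f\mapsto\lambda\,Jf(\xi)$ are norm continuous and agree on the dense sublattice $FVL(A)$, so they coincide on all of $FBL(A)$. The main obstacle is the boundedness of $\zeta$: without the identification of the dual in Proposition \ref{whatisdual} there would be no reason for the point produced by Lemma \ref{pointeval} to lie in the compact cube $\Delta_A$, and it is exactly the $\|\cdot\|_F$-continuity of $\omega$ that forces this.
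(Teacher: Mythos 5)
Your proposal is correct and follows essentially the same route as the paper's own proof: Lemma \ref{pointeval} produces a point of $\Real^A$, automatic norm-boundedness of the (positive) lattice homomorphism forces that point to be a bounded function on $A$ (the paper gets this directly from $\|\delta_a\|_F=1$, you via Proposition \ref{whatisdual} — the same fact), homogeneity lets you rescale into $\Delta_A$, and density of $FVL(A)$ plus continuity extends the formula to all of $FBL(A)$. Your separate treatment of the case $\lambda=0$ is a harmless variant of the paper's device of simply choosing some $\lambda>0$ large enough.
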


\begin{proof}
If $\omega $ is a real valued lattice homomorphism on $FBL\left( A\right) $,
then it follows from Lemma \ref{pointeval} that there is $\eta \in \mathbb{R}^{A}$
such that $\omega \left( f\right) =f\left( \eta \right) $, $f\in FVL\left(
A\right) $. As $FBL\left( A\right) $ is a Banach lattice, $\omega $ is $%
\left\Vert \cdot \right\Vert _{F}$-bounded and so, $\sup_{a\in A}\left\vert
\eta \left( a\right) \right\vert =\sup_{a\in A}\left\vert \omega \left(
\delta _{a}\right) \right\vert =\left\Vert \omega \right\Vert <\infty $.
Hence, there is a $\lambda >0$ such that $\xi =\lambda ^{-1}\eta \in \Delta
_{A}$. If $f\in FVL\left( A\right) $, then
\[
\omega \left( f\right) =f\left( \eta \right) =\lambda f\left( \lambda
^{-1}\eta \right) =\lambda Jf\left( \xi \right) .
\]

Given $f\in FBL\left( A\right) $, choose a sequence $\left( g_{n}\right) $
in $FVL\left( A\right) $ with\break $\left\Vert f-g_{n}\right\Vert _{F}\rightarrow
0$, so that $\left\Vert Jf-Jg_{n}\right\Vert _{\infty }\rightarrow 0$ and
hence $Jg_{n}\left( \xi \right) \rightarrow Jf\left( \xi \right) $. Thus,
\[
\omega \left( f\right) =\lim_{n\rightarrow \infty }\omega \left(
g_{n}\right) =\lambda \lim_{n\rightarrow \infty }Jg_{n}\left( \xi \right)
=\lambda Jf\left( \xi \right) .
\]

The converse is clear as if $\xi \in \Delta _{A}$ and $0\leq \lambda \in
\mathbb{R}$, then the formula $\omega \left( f\right) =\lambda Jf\left( \xi
\right) $, $f\in FBL\left( A\right) $, defines a lattice homomorphism on $%
FBL\left( A\right) $.
\end{proof}

It is clear already that, for $f\in FVL\left( A\right) $, $f=0$ if and only
if $Jf=0$. if and only if $\omega \left( f\right) =0$ for every $\left\Vert
\cdot \right\Vert _{F}$-bounded real-valued lattice homomorphism on $%
FVL\left( A\right) $. We need this equivalence for $f\in FBL\left( A\right) $%
.

\begin{corollary}\label{freefinal}
For any non-empty set $A$ and $f\in FBL\left( A\right) $ the following are
equivalent:

\begin{enumerate}
\item[(i)] $f=0$;

\item[(ii)] $\omega \left( f\right) =0$ for all real-valued lattice
homomorphisms on $FBL\left( A\right) $;

\item[(iii)] $Jf=0$.
\end{enumerate}
\end{corollary}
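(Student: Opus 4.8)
The plan is to prove the three statements equivalent as a cycle $(i)\Rightarrow(ii)\Rightarrow(iii)\Rightarrow(i)$, and to recognise at the outset that the only implication carrying genuine content is $(iii)\Rightarrow(i)$, which amounts to the injectivity of the extended lattice homomorphism $J$ on the whole of $FBL(A)$. The other two implications fall out almost immediately from Theorem \ref{BThm01}.

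For $(i)\Rightarrow(ii)$ there is nothing to do: if $f=0$ then $\omega(f)=0$ for every linear $\omega$. For $(ii)\Rightarrow(iii)$ I would invoke the (easy) converse direction of Theorem \ref{BThm01}, which says that for each fixed $\xi\in\Delta_A$ the evaluation $f\mapsto Jf(\xi)$ is itself a real-valued lattice homomorphism on $FBL(A)$. Hence if $(ii)$ holds then $Jf(\xi)=0$ for every $\xi\in\Delta_A$, which is precisely the assertion $Jf=0$.

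The substance is $(iii)\Rightarrow(i)$. The key technical observation is a compatibility relation between $J$ and the finite projections $P_B$: for a finite $B\subseteq A$ and $\xi\in\Delta_A$,
\[J(P_Bf)(\xi)=Jf(\xi\chi_B)\qquad(f\in FBL(A)).\]
This is checked first for $f\in FVL(A)$, where both sides equal $f(\xi\chi_B)$ directly from the definition $P_Bf(\xi)=f(\xi\chi_B)$ together with the fact that $\xi\chi_B\in\Delta_A$ whenever $\xi\in\Delta_A$; it then passes to the completion because $f\mapsto J(P_Bf)(\xi)$ and $f\mapsto Jf(\xi\chi_B)$ are both $\|\cdot\|_F$-continuous (recall $P_B$ and $J$ are contractive) and $FVL(A)$ is dense in $FBL(A)$. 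Now suppose $Jf=0$. The displayed relation gives $J(P_Bf)=0$ for every finite $B$. But $P_Bf$ lies in $FBL(B)$, and the restriction of $J=J_A$ to $FBL(B)$ is $J_B$ (as noted following Proposition \ref{sublattice}), which by Proposition \ref{BProp01} is a lattice and norm isomorphism onto $H(\Delta_B)$, in particular injective. Hence $P_Bf=0$ for every finite $B\subseteq A$. Finally Proposition \ref{finitestrong} tells us that the net $\{P_Bf:B\in\mathcal{F}(A)\}$ converges in norm to $f$, so $f=0$, closing the cycle.

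The main obstacle is exactly this last step: bridging the known injectivity of $J$ on each finitely generated piece $FBL(B)$ to injectivity on the completion $FBL(A)$. The compatibility relation $J(P_Bf)(\xi)=Jf(\xi\chi_B)$ is what makes the vanishing of $Jf$ force the vanishing of the finite truncations $P_Bf$, and the strong convergence $P_B\to I$ then recovers $f$ itself; without that relation the passage from the finite-dimensional case to the general one would not go through.
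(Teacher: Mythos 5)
Your proof is correct, and its skeleton matches the paper's: both reduce to the finitely generated sublattices via the projections $P_B$, invoke the injectivity of $J$ on $FBL(B)$ from Proposition \ref{BProp01}, and finish with the strong convergence $P_B\to I$ of Proposition \ref{finitestrong}. The genuine difference is which implication carries the weight and how the crucial fact $JP_Bf=0$ is obtained. The paper runs the cycle (i)$\Rightarrow$(iii)$\Rightarrow$(ii)$\Rightarrow$(i) and makes (ii)$\Rightarrow$(i) the hard step: since $g\mapsto (JP_Bg)(\xi)$ is itself a real-valued lattice homomorphism on $FBL(A)$ (a composite of lattice homomorphisms), hypothesis (ii) annihilates it immediately, with no approximation argument needed inside the corollary. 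You instead run (i)$\Rightarrow$(ii)$\Rightarrow$(iii)$\Rightarrow$(i) and make (iii)$\Rightarrow$(i) the hard step, which obliges you to prove the commutation identity $J(P_Bf)(\xi)=Jf(\xi\chi_B)$ by density and continuity; this identity is essentially Proposition \ref{BProp02}(i), which the paper only establishes afterwards (once $FBL(A)$ has been identified with a sublattice of $H(\Delta_A)$), and your derivation of it from the observations preceding Proposition \ref{freefinite} is correct and non-circular. The trade-off: your route costs one extra approximation step but establishes the injectivity of $J$ (Corollary \ref{fnrepn}) by a direct argument rather than extracting it from the three-way equivalence, whereas the paper's route is marginally shorter because hypothesis (ii) is tailor-made to kill the functionals $g\mapsto(JP_Bg)(\xi)$.
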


\begin{proof}
Clearly, (i) implies (iii) and that (iii) implies (ii) follows directly from
Theorem \ref{BThm01}.

Now assume that (ii) holds. Note firstly that it follows from Proposition 
\ref{BProp01} that for any non-empty finite subset $B\subseteq A$ the
restriction of $J$ to $FBL\left( B\right) $ is injective. For such a set $B$%
, the map $g\longmapsto \left( JP_{B}g\right) \left( \xi \right) $, $g\in
FBL\left( A\right) $, is a real-valued lattice homomorphism on $FBL\left(
A\right) $ for each $\xi \in \Delta _{A}$, so that $JP_{B}g=0$. As $J$ is
injective on $FBL\left( B\right) $, this shows that $P_{B}f=0$. It follows
from Proposition \ref{finitestrong} that $P_{B}f\rightarrow f$ for $\left\Vert \cdot \right\Vert _{F}$, so that $f=0 $. This suffices to complete the proof.
\end{proof}

\begin{corollary}\label{fnrepn}
If $A$ is any non-empty set, then the lattice homomorphism $J:FBL\left(
A\right) \rightarrow H\left( \Delta _{A}\right) $ is injective, so that $%
FBL\left( A\right) $ is linearly order isomorphic to a vector sublattice of $%
H\left( \Delta _{A}\right) $.
\end{corollary}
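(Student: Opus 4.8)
The plan is to read off injectivity directly from Corollary \ref{freefinal}. Recall that $J:FBL(A)\to H(\Delta_A)$ is already known to be a linear lattice homomorphism, since it arose as the continuous extension of the restriction map $J_A$ with $\|J\|=1$ (see the discussion preceding Proposition \ref{BProp01}). To say that $J$ is injective is exactly to say that its kernel is trivial, i.e. that $Jf=0$ forces $f=0$ for $f\in FBL(A)$. But this is precisely the implication (iii)$\Rightarrow$(i) in Corollary \ref{freefinal}, so no further argument is needed for injectivity itself.

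It then remains only to upgrade the injective lattice homomorphism $J$ to a linear order isomorphism onto its range. This is routine: any injective lattice homomorphism $T$ between vector lattices is automatically an order isomorphism onto $T(E)$, since for $x,y\in E$ one has $x\le y\iff x\vee y=y\iff T(x)\vee T(y)=T(y)\iff T(x)\le T(y)$, where the backward direction of the middle equivalence uses injectivity. Applying this to $J$ shows that $FBL(A)$ is linearly order isomorphic to the vector sublattice $J\big(FBL(A)\big)$ of $H(\Delta_A)$, which is the asserted conclusion.

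The substance of the result therefore lies entirely in the two preceding results: Theorem \ref{BThm01}, which identifies the real-valued lattice homomorphisms on $FBL(A)$ with the evaluations $f\mapsto\lambda Jf(\xi)$, and Corollary \ref{freefinal}, whose proof combines the finite-dimensional isomorphism of Proposition \ref{BProp01} with the strong convergence $P_Bf\to f$ supplied by Proposition \ref{finitestrong}. Given those, there is no genuine obstacle remaining; the only point worth a moment's care is that $J$ is \emph{already} known to be a lattice homomorphism, so that establishing triviality of its kernel is enough to make it an order embedding rather than merely a linear injection.
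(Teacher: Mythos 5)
Your proof is correct and matches the paper's intended argument exactly: the paper states this corollary without proof precisely because injectivity is the implication (iii)$\Rightarrow$(i) of Corollary \ref{freefinal}, and the upgrade from injective lattice homomorphism to order isomorphism onto the range is the routine observation you supply. Your explicit verification that an injective lattice homomorphism is an order embedding is a nice touch but adds nothing beyond what the paper takes as understood.
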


In the sequel, we shall identify $FBL\left( A\right) $ with the vector
sublattice \linebreak $J\left( FBL\left( A\right) \right) $ of $H\left(
\Delta _{A}\right) $.

As we have seen in [Proposition 4.8], if $B$ is a non-empty subset of $A$,
then $FBL\left( B\right) $ may be identified isometrically with the closed
vector sublattice of $FBL\left( A\right) $ generated by $\left\{ \delta
_{b}:b\in B\right\} $ and there is a canonical contractive lattice
homomorphic projection $P_{B}$ in $FBL\left( A\right) $ onto $FBL\left(
B\right) $. It should be observed that we have the following commutative
diagram:
\[\begin{CD}
FBL(A)@>J_A>> H(\Delta_A)\\
@A k_B AA @AA j_B A\\
FBL(B) @>>J_B> H(\Delta_B)\\
\end{CD}
\]
where $j_{B}$ is the restriction to $H\left( \Delta _{B}\right) $ of the
injective lattice homomorphism $j_{B}$ introduced in Section 2, and $k_{B}$
is the isometric lattice embedding of $FBL\left( B\right) $ into $FBL\left(
A\right) $ guaranteed by Proposition \ref{sublattice}. Note that also $j_{B}$ is an
isometry. The commutativity of the diagram follows by considering the action
of the maps on the free generators of $FBL\left( B\right) $. Consequently,
the canonical embedding of $FBL\left( B\right) $ into $FBL\left( A\right) $
is compatible with the canonical embedding of $H\left( \Delta _{B}\right) $
into $H\left( \Delta _{A}\right) $. The next proposition describes this in
terms of $FBL\left( A\right) $ considered as a vector sublattice of $H\left(
\Delta _{A}\right) $. We consider $\mathbb{R}^{\Delta _{B}}$ as a subspace
of $\mathbb{R}^{\Delta _{A}}$ as explained in Section 2.

Recall that if $B$ is a non-empty subset of $A$, then for any $\xi \in
\Delta _{A}$ we denote by $\xi _{B}$ the restriction of $\xi $ to $B$, so
that $\xi _{B}\in \Delta _{B}$.

\begin{proposition}
\label{BProp02}Suppose that $B$ is a non-empty subset of $A$. Considering $%
FBL\left( A\right) $ as a vector sublattice of $H\left( \Delta _{A}\right) $%
, we have:

\begin{enumerate}
\item[(i)] the canonical projection $P_{B}$ of $FBL\left( A\right) $ onto $%
FBL\left( B\right) $ is given by $P_{B}f\left( \xi \right) =f\left( \xi \chi
_{B}\right) $, $\xi \in \Delta _{A}$, for all $f\in FBL\left( A\right) $;

\item[(ii)] if $f\in FBL\left( A\right) $, then a necessary and sufficient
condition for $f$ to belong to $FBL\left( B\right) $ is that $f\left( \xi
\right) =f\left( \eta \right) $ whenever $\xi ,\eta \in \Delta _{A}$ with $%
\xi _{B}=\eta _{B}$.
\end{enumerate}
\end{proposition}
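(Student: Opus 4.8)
The plan is to prove part (i) first and then deduce part (ii) as a direct consequence. The key observation for (i) is that the map $Q_B\colon H(\Delta_A)\to H(\Delta_A)$ defined by $(Q_Bf)(\xi)=f(\xi\chi_B)$ is well defined: since $\xi\in\Delta_A=[-1,1]^A$ and $\chi_B$ takes only the values $0$ and $1$, we have $\xi\chi_B\in\Delta_A$, and precomposition with the continuous map $\xi\mapsto\xi\chi_B$ preserves both continuity and homogeneity. Thus $Q_B$ is a contractive lattice homomorphism of $H(\Delta_A)$ into itself (it is essentially the restriction to $\Delta_A$ of the projection of Lemma \ref{restprops}). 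Once we identify $f$ with $Jf$, the assertion in (i) is precisely the identity $J(P_Bf)=Q_B(Jf)$ for every $f\in FBL(A)$.

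My first step is to verify this identity on the free generators. For $b\in B$ we have $P_B\delta_b=\delta_b$, while $(Q_B\delta_b)(\xi)=\delta_b(\xi\chi_B)=(\xi\chi_B)(b)=\xi(b)=\delta_b(\xi)$; for $a\in A\setminus B$ we have $P_B\delta_a=0$ and $(Q_B\delta_a)(\xi)=(\xi\chi_B)(a)=0$. Hence the two lattice homomorphisms $J\circ P_B$ and $Q_B\circ J$ from $FVL(A)$ into $H(\Delta_A)$ agree on the generating set $\{\delta_a:a\in A\}$, and therefore coincide on all of $FVL(A)$.

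The second step is to upgrade this to $FBL(A)$ by a density argument. Both $f\mapsto J(P_Bf)$ and $f\mapsto Q_B(Jf)$ are continuous from $(FBL(A),\|\cdot\|_F)$ into $(H(\Delta_A),\|\cdot\|_\infty)$, since $\|J(P_Bf)\|_\infty\le\|P_Bf\|_F\le\|f\|_F$ and $\|Q_B(Jf)\|_\infty\le\|Jf\|_\infty\le\|f\|_F$, using that $P_B$, $J$ and $Q_B$ are all contractions. As they agree on the $\|\cdot\|_F$-dense subspace $FVL(A)$, they agree on $FBL(A)$, which proves (i). I expect this density/continuity bookkeeping---keeping track of which norm controls which map---to be the only point requiring genuine care; everything else is formal.

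For part (ii), recall from Proposition \ref{sublattice} that $P_B$ is a projection of $FBL(A)$ onto $FBL(B)$, so $f\in FBL(B)$ if and only if $P_Bf=f$. By part (i) this says exactly that $f(\xi\chi_B)=f(\xi)$ for all $\xi\in\Delta_A$, and it remains only to check the elementary equivalence with the stated condition. If $\xi_B=\eta_B$ then $\xi\chi_B=\eta\chi_B$, so $f(\xi)=f(\xi\chi_B)=f(\eta\chi_B)=f(\eta)$; conversely, since $(\xi\chi_B)_B=\xi_B$, applying the condition ``$f(\xi)=f(\eta)$ whenever $\xi_B=\eta_B$'' to $\eta=\xi\chi_B$ yields $f(\xi)=f(\xi\chi_B)$. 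This completes the argument, and one sees that (ii) is essentially the $\Delta_A$-version of Lemma \ref{trivlemma}.
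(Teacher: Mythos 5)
Your proposal is correct and follows essentially the same route as the paper: establish the formula $P_Bf(\xi)=f(\xi\chi_B)$ on $FVL(A)$ (the paper cites its earlier Section 3 computation on the generators, which you simply re-derive), extend it to $FBL(A)$ by density of $FVL(A)$ together with the fact that $\|\cdot\|_F$-convergence controls $\|\cdot\|_\infty$-convergence, and then deduce (ii) from (i) by exactly the same two elementary manipulations with $\xi\chi_B$ and $(\xi\chi_B)_B=\xi_B$. The only cosmetic difference is that you package the density step as two contractive lattice homomorphisms agreeing on a dense sublattice, where the paper argues with an explicit approximating sequence and pointwise limits.
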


\begin{proof}
(i). Let $P_{B}$ be the canonical projection in $FBL\left( A\right) $ onto $%
FBL\left( B\right) $ (see Proposition \ref{sublattice}), so that $P_{B}\delta
_{a}=\delta _{a}$ if $a\in B$ and $P_{B}\delta _{a}=0$ if $a\in A\diagdown B$%
. If $f\in FVL\left( A\right) $, then it follows from the observations
preceding Proposition \ref{freefinite} that $P_{B}f\left( \xi \right) =f\left( \xi \chi
_{B}\right) $, $\xi \in \Delta _{A}$. Given $f\in FBL\left( A\right) $, let $%
\left( f_{n}\right) $ be a sequence in $FVL\left( A\right) $ such that $%
\left\Vert f-f_{n}\right\Vert _{F}\rightarrow 0$, which implies that $%
\left\Vert f-f_{n}\right\Vert _{\infty }\rightarrow 0$ and so, $f_{n}\left(
\xi \right) \rightarrow f\left( \xi \right) $, $\xi \in \Delta _{A}$.
Furthermore, $\left\Vert P_{B}f-P_{B}f_{n}\right\Vert _{F}\rightarrow 0$ and
hence $P_{B}f_{n}\left( \xi \right) \rightarrow P_{B}f\left( \xi \right) $,
$\xi \in \Delta _{A}$. Since $P_{B}f_{n}\left( \xi \right) =f_{n}\left( \xi
\chi _{B}\right) \rightarrow f\left( \xi \chi _{B}\right) $, we may conclude
that $P_{B}f\left( \xi \right) =f\left( \xi \chi _{B}\right) $, $\xi \in
\Delta _{A}$.

(ii). \textit{Necessity}. If $f\in FBL\left( B\right) $ and $\xi ,\eta \in
\Delta _{A}$ are such that $\xi _{B}=\eta _{B}$, then $\xi \chi _{B}=\eta
\chi _{B}$ and hence it follows from (i) that
\[
f\left( \xi \right) =P_{B}f\left( \xi \right) =f\left( \xi \chi _{B}\right)
=f\left( \eta \chi _{B}\right) =P_{B}f\left( \eta \right) =f\left( \eta
\right) .
\]

\textit{Sufficiency}. If $f\in FBL\left( A\right) $ is such that $f\left(
\xi \right) =f\left( \eta \right) $ whenever $\xi ,\eta \in \Delta _{A}$
with $\xi _{B}=\eta _{B}$, then $P_{B}f\left( \xi \right) =f\left( \xi \chi
_{B}\right) =f\left( \xi \right) $, as $\left( \xi \chi _{B}\right) _{B}=\xi
_{B}$, for all $\xi \in \Delta _{A}$ and hence $f=P_{B}f\in FBL\left(
B\right) $. \medskip
\end{proof}

Recall that a sublattice $H$ of a lattice $L$ is said to be \textit{%
regularly embedded} if every subset of $H$ with a supremum (resp. infimum)
in $H$ has the same supremum (resp. infimum) in $L$. If we are dealing with
vector lattices it suffices to consider only the case of a subset of $H$
that is downward directed in $H$ to $0$ and check that it also has infimum $%
0 $ in $L$.

\begin{proposition}\label{orderclosedsublattice}
If $A$ is any non-empty set and $B$ is a non-empty subset of $A$, then $%
FBL\left( B\right) $ is regularly embedded in $FBL\left( A\right) $.
\end{proposition}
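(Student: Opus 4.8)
The plan is to prove the equivalent statement singled out in the remark preceding the proposition: given a net $(f_\tau)$ in $FBL(B)$ that is downward directed to $0$ in $FBL(B)$, I must show that its infimum in $FBL(A)$ is again $0$. The idea is to route the whole argument through the function representations of the previous section, using the identifications $FBL(B)\subseteq H(\Delta_B)\subseteq C(\Delta_B)$ and $FBL(A)\subseteq H(\Delta_A)\subseteq C(\Delta_A)$ together with the commutative diagram following Corollary \ref{fnrepn}. Under these identifications the inclusion of $FBL(B)$ into $FBL(A)$ is the restriction of the map $j_B\colon C(\Delta_B)\to C(\Delta_A)$, $j_Bf=f\circ r_B$, where $r_B\colon\Delta_A\to\Delta_B$ is the continuous, open, surjective restriction map.

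First I would record that $j_B$ is order continuous as a map $C(\Delta_B)\to C(\Delta_A)$. For a downward directed net $g_\tau\downarrow 0$ in $C(\Delta_B)$, the infimum being $0$ means precisely that the (upper semicontinuous) pointwise infimum $\hat g=\inf_\tau g_\tau$ vanishes on a dense subset $D$ of $\Delta_B$, by the usual $C(K)$ characterisation of order convergence (if $\{\hat g=0\}$ were not dense, a Baire category argument on the closed sets $\{\hat g\ge 1/n\}$ produces an open set on which $\hat g$ is bounded away from $0$, and Urysohn's lemma then yields a nonzero continuous minorant). Since $r_B$ is an open surjection, $r_B^{-1}(D)$ is dense in $\Delta_A$ and $\inf_\tau(g_\tau\circ r_B)=\hat g\circ r_B$ vanishes there, so the largest continuous minorant of $\hat g\circ r_B$ is $0$ and hence $g_\tau\circ r_B\downarrow 0$ in $C(\Delta_A)$. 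Granting this, the argument runs as follows: if $f_\tau\downarrow 0$ in $FBL(B)$ and \emph{if} this infimum is still $0$ when computed in $C(\Delta_B)$, then $f_\tau=j_Bf_\tau\downarrow 0$ in $C(\Delta_A)$; as $FBL(A)$ is a sublattice of $C(\Delta_A)$, any lower bound of $(f_\tau)$ in $FBL(A)$ is a lower bound in $C(\Delta_A)$ and so is $\le 0$, giving $\inf_{FBL(A)}f_\tau=0$. As a consistency check one may also apply the projection $P_B$ of Proposition \ref{sublattice}: a lower bound $0\le g\le f_\tau$ in $FBL(A)$ satisfies $0\le P_Bg\le P_Bf_\tau=f_\tau$, so $P_Bg=0$, and by Proposition \ref{BProp02} this says exactly that $g$ vanishes at every point $\xi\chi_B$.

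Everything therefore reduces to the single fact that $FBL(B)$ is regularly embedded in $C(\Delta_B)$; equivalently, that whenever $f_\tau\downarrow 0$ in $FBL(B)$ the pointwise infimum $\inf_\tau f_\tau$ vanishes on a dense subset of $\Delta_B$. When $B$ is finite this is routine: by Proposition \ref{BProp01} one has $FBL(B)=H(\Delta_B)$, restriction to the sup-norm sphere $S_B$ identifies $H(\Delta_B)$ with $C(S_B)$, and the assertion follows from the corresponding density statement in $C(S_B)$ together with the continuity of the radial retraction and homogeneity. The substance of the proposition is the case of infinite $B$, and this is where I expect the real difficulty to lie. The natural attack is by contradiction: if $\{\inf_\tau f_\tau>0\}$ had nonempty interior, a Baire category argument would produce $\epsilon>0$ and a nonempty basic open box $W=\{\xi:\xi_{C_0}\in U\}$, determined by a \emph{finite} set $C_0\subseteq B$, on which $\inf_\tau f_\tau\ge\epsilon$; one then wants to manufacture a nonzero $h\in FBL(B)_+$ with $h\le f_\tau$ for every $\tau$, contradicting $\inf_{FBL(B)}f_\tau=0$.

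The main obstacle is exactly the construction of this minorant. Every element of $FBL(B)$ is continuous and positively homogeneous, so its support is a cone; but $W$ constrains only the finitely many coordinates in $C_0$ and leaves all remaining coordinates free in $[-1,1]$, so a homogeneous function that is to be dominated by $f_\tau$ at \emph{all} points of $W$—including those whose free coordinates are large—cannot simply be a bump supported in $W$. I would try to resolve this by working with the finite-dimensional projection $P_{C_0}$ and the identification $FBL(C_0)=H(\Delta_{C_0})$, building the candidate minorant on the finite cube $\Delta_{C_0}$ from the sphere $S_{C_0}$ and then exploiting the homogeneity of $\inf_\tau f_\tau$ to control its values off the slice $\{\xi:\xi(a)=0,\ a\notin C_0\}$. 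The delicate point, and the crux of the whole proposition, is to guarantee that the resulting function really does lie below each $f_\tau$ globally on $\Delta_B$ and not merely on that slice.
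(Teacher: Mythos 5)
Your reduction machinery is sound as far as it goes: $r_B\colon\Delta_A\to\Delta_B$ is indeed a continuous open surjection, so $j_B\colon C(\Delta_B)\to C(\Delta_A)$ is order continuous, and it is true that \emph{if} the infimum of your net were $0$ when computed in $C(\Delta_B)$ then it would be $0$ in $FBL(A)$. The fatal problem is that the statement you reduce everything to --- that $FBL(B)$ is regularly embedded in $C(\Delta_B)$ --- is \emph{false} for every infinite $B$, so the ``crux'' you flag at the end is not a hard step awaiting a clever construction; it is impossible. Take $B=\Natural$, fix $\epsilon>0$, let $W=\{\xi\in\Delta_\Natural:\xi(1)>\frac12\}$ and let $\mathcal{T}=\{f\in FBL(\Natural)_+:f(\xi)\ge\epsilon\ \text{for all}\ \xi\in W\}$. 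Then $\mathcal{T}$ is non-empty ($2\epsilon\delta_1^+\in\mathcal{T}$) and downward directed, and for each $n\ge2$ it contains $f_n=2\epsilon\bigl[\delta_1^+\wedge(2\delta_1^+-\tfrac12|\delta_n|)^+\bigr]$, which vanishes at every $\xi$ with $|\xi(n)|\ge4\xi(1)^+$. Now let $h\in FBL(\Natural)$ be any lower bound of $\mathcal{T}$. If $\xi(1)<\frac14$, let $\eta_k$ agree with $\xi$ except that $\eta_k(k)=1$; then $f_k(\eta_k)=0$, so $h(\eta_k)\le0$, and $\eta_k\to\xi$ in the product topology, so $h(\xi)\le0$ by continuity; since $h$ is positively homogeneous, applying this to $t\xi$ with $t<\frac14$ gives $h\le0$ everywhere. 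Thus $\mathcal{T}\downarrow0$ in $FBL(\Natural)$, yet its pointwise infimum is $\ge\epsilon$ on the open set $W$, and the (non-homogeneous) continuous function $\xi\mapsto\epsilon\min\{1,(2\xi(1)-1)^+\}$ is a non-zero lower bound of $\mathcal{T}$ in $C(\Delta_\Natural)$. So lattice infima in $FBL(B)$ are simply not witnessed by pointwise infima vanishing on dense sets, and no argument whose first step is ``$f_\tau\downarrow0$ in $FBL(B)$ implies $f_\tau\downarrow0$ in $C(\Delta_B)$'' can succeed.

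The same example pinpoints why your projected minorant cannot exist: when $B$ is infinite, a continuous function on $\Delta_B$ supported in the cone generated by a basic open box must vanish wherever the finitely many constrained coordinates are small relative to $1$ (push a far coordinate up to $1$ and pass to the limit), and homogeneity then kills it entirely. The paper's proof avoids the whole issue by never leaving the homogeneous world and by using the coordinates of $A\setminus B$ in an essential way: given a lower bound $g\in FBL(A)$ with $g(\xi_0)>0$ and $\xi_0\chi_B\neq0$, it forms, for each $b\in B$, the function $h(\xi)=g\bigl(\xi\chi_B+\frac{|\xi(b)|}{\|\xi_0\chi_B\|_\infty}\,\xi_0\chi_{A\setminus B}\bigr)$; the factor $|\xi(b)|$ keeps $h$ homogeneous while making it depend only on $\xi_B$, so $h\in FBL(B)$, and since each $f_\gamma$ ignores the $A\setminus B$ coordinates one gets $0\le h\le f_\gamma$, whence $h=0$; choosing $b=b_n$ with $|\xi_0(b_n)|\to\|\xi_0\chi_B\|_\infty$ and using continuity of $g$ then forces $g(\xi_0)=0$, a contradiction. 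Note that the argument above for $\mathcal{T}$ also shows its infimum is $0$ in $H(\Delta_A)$: regular embedding here is a genuinely relative phenomenon, valid into $FBL(A)$ and $H(\Delta_A)$ but not into $C(\Delta_B)$ or $C(\Delta_A)$, so any correct proof must exploit the homogeneity of the ambient lattice rather than discard it as your route does.
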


\begin{proof}
Suppose that $\left( f_{\gamma }\right) _{\gamma \in \Gamma }$ is a downward
directed net in $FBL\left( B\right) $ such that $f_{\gamma }\downarrow
_{\gamma }0$ in $FBL\left( B\right) $ and suppose that $g\in FBL\left(
A\right) $ satisfies $0<g\leq f_{\gamma }$ for all $\gamma \in \Gamma $. Let
$\xi _{0}\in \Delta _{A}$ be such that $g\left( \xi _{0}\right) >0$. We claim that we may assume that $\xi _{0}\chi _{B}\neq 0$. If our chosen $\xi_0$ is such that $\xi_0\chi_B=0$, i.e. $xi_0=\xi_0\chi_{A\setminus B}$, then consider $\xi_\epsilon=\xi_0+\epsilon \xi_B$. Since $xi_\epsilon\to \xi_0$ in $\Delta_A$ as $\epsilon\downarrow 0$ and $g$ is continuous we may choose $\epsilon\in(0,1]$ with $g(\xi_\epsilon)>0$ and then replace $xi_0$ by this $\xi_\epsilon$. 
 Given $b\in
B$, define $h\in H\left( \Delta _{A}\right) $ by setting
\[
h\left( \xi \right) =g\left( \xi \chi _{B}+\frac{\left\vert \xi \left(
b\right) \right\vert }{\left\Vert \xi _{0}\chi _{B}\right\Vert _{\infty }}%
\xi _{0}\chi _{A\diagdown B}\right) ,\ \ \ \xi \in \Delta _{A}.
\]%
We claim that $h\in FBL\left( A\right) $. Indeed, define the lattice
homomorphism $T:H\left( \Delta _{A}\right) \rightarrow H\left( \Delta
_{A}\right) $ by setting
\[
Tf\left( \xi \right) =f\left( \xi \chi _{B}+\frac{\left\vert \xi \left(
b\right) \right\vert }{\left\Vert \xi _{0}\chi _{B}\right\Vert _{\infty }}%
\xi _{0}\chi _{A\diagdown B}\right) ,\ \ \ \xi \in \Delta _{A},
\]%
for all $f\in H\left( \Delta _{A}\right) $. Observing that
\[
T\delta _{a}=\delta _{a}\chi _{B}\left( a\right) +\frac{\left\vert \delta
_{b}\right\vert }{\left\Vert \xi _{0}\chi _{B}\right\Vert _{\infty }}\delta
_{a}\left( \xi _{0}\right) \chi _{A\diagdown B}\left( a\right) ,
\]%
it follows that $T\delta _{a}\in FVL\left( A\right) $ for all $a\in A$ and
that $\sup_{a\in A}\left\Vert T\delta _{a}\right\Vert _{F}<\infty $.
Consequently, there exists a unique lattice homomorphism $S:FBL\left(
A\right) \rightarrow FBL\left( A\right) $ such that $S\delta _{a}=T\delta
_{a}$ for all $a\in A$. Evidently, $Tf=Sf$ for all $f\in FVL\left( A\right) $%
. Given $f\in FBL\left( A\right) $, we may approximate $f$ with a sequence $%
\left( f_{n}\right) $ with respect to $\left\Vert \cdot \right\Vert _{F}$.
Using that convergence with respect to $\left\Vert \cdot \right\Vert _{F}$
implies pointwise convergence on $\Delta _{A}$, it follows that $Sf=Tf$ (cf.
the proof of Proposition \ref{BProp02}). This implies, in particular, that $%
h=Tg=Sg\in FBL\left( A\right) $, by which our claim is proved.

If $\xi ,\eta \in \Delta _{A}$ are such that $\xi _{B}=\eta _{B}$, then $%
h\left( \xi \right) =h\left( \eta \right) $ and so, by Proposition \ref%
{BProp02} and Lemma \ref{trivlemma}, it follows that $h\in FBL\left( B\right) $. If $%
\xi \in \Delta _{A}$, then
\[
\xi _{B}=\left( \xi \chi _{B}+\frac{\left\vert \xi \left( b\right)
\right\vert }{\left\Vert \xi _{0}\chi _{B}\right\Vert _{\infty }}\xi
_{0}\chi _{A\diagdown B}\right) _{B}
\]%
(recall that the subscript $B$ indicates taking the restriction to the
subset $B$) and hence
\begin{eqnarray*}
f_{\gamma }\left( \xi \right)  &=&f_{\gamma }\left( \xi \chi _{B}+\frac{%
\left\vert \xi \left( b\right) \right\vert }{\left\Vert \xi _{0}\chi
_{B}\right\Vert _{\infty }}\xi _{0}\chi _{A\diagdown B}\right)  \\
&\geq &g\left( \xi \chi _{B}+\frac{\left\vert \xi \left( b\right)
\right\vert }{\left\Vert \xi _{0}\chi _{B}\right\Vert _{\infty }}\xi
_{0}\chi _{A\diagdown B}\right) =h\left( \xi \right) ,\ \ \ \xi \in \Delta
_{A},
\end{eqnarray*}%
that is, $f_{\gamma }\geq h\geq 0$ for all $\gamma \in \Gamma $. We may
conclude that $h=0$.

It follows, in particular, that
\[
g\left( \xi _{0}\chi _{B}+\frac{\left\vert \xi _{0}\left( b\right)
\right\vert }{\left\Vert \xi _{0}\chi _{B}\right\Vert _{\infty }}\xi
_{0}\chi _{A\diagdown B}\right) =0,\ \ \ b\in B.
\]%
Applying this to $b=b_{n}$, where $\left( b_{n}\right) $ is a sequence in $B$
satisfying $\left\vert \xi _{0}\left( b_{n}\right) \right\vert \rightarrow
\left\Vert \xi _{0}\chi _{B}\right\Vert _{\infty }$, the continuity of $g$
implies that
\[
g\left( \xi _{0}\right) =g\left( \xi _{0}\chi _{B}+\xi _{0}\chi _{A\diagdown
B}\right) =0,
\]%
which is a contradiction. The proof is complete.
\end{proof}

\section{Some Properties of Free Banach lattices.}

If $X$ is a non-empty set and $f:X\to\Real$ then we let $O_f=\{x\in
X:f(x)\ne 0\}$ and if $W$ is a non-empty subset of $\Real^X$ then we
define $O_W=\bigcup\{O_f:f\in W\}$. Although probably well known we
know of no convenient reference for the following result.

\begin{proposition}If $X$ is a Hausdorff topological space, $L$ a
vector sublattice of $C(X)$ and the open set $O_L$ is connected then
the only projection bands in $L$ are $\{0\}$ and $L$.
\begin{proof}
Suppose that $B$ is a projection band in $L$, so that $L=B\oplus B^d$. If $f\in B$ and $g\in B^d$ then $f\perp g$ and hence $O_f\cap O_g=\emptyset$ and therefore $O_B\cap O_{B^d}=\emptyset$.
Given $x\in O_L$ there is $0\ne f\in L_+$ with $f(x)>0$. We may write $f=f_1\oplus f_2$ with $0\le f_1\in B$ and $0\le f_2\in B^d$. Clearly, either $f_1(x)>0$ or
$f_2(x)>0$. I.e. $x\in O_{f_1}\cup O_{f_2}\subset O_B\cup O_{B^d}$. Hence $O_L\subset O_B\cup O_{B^d}$ and therefore $O_L= O_B\cup O_{B^d}$. The sets
$O_B$ and $O_{B^d}$ are both open and disjoint and $O_L$ is, by hypothesis, connected. This is only possible if either $O_B$ or $O_{B^d}$ is empty which says that either
$L=B^d$ or $L=B$.
\end{proof}
\end{proposition}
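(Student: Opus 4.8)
The plan is to translate the algebraic splitting $L=B\oplus B^d$ coming from a projection band into a topological splitting of the open set $O_L$, and then to let connectedness finish the argument. The single observation that makes everything run is that, since $L$ is a vector sublattice of $C(X)$, its lattice operations are the pointwise ones inherited from $C(X)$; consequently disjointness $f\perp g$ is equivalent to $|f|\wedge|g|\equiv 0$, that is, to $O_f\cap O_g=\emptyset$.

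First I would fix a projection band $B$, so that $L=B\oplus B^d$. Applying the observation above to an arbitrary $f\in B$ and $g\in B^d$ gives $O_f\cap O_g=\emptyset$, and taking the union over all such $f,g$ yields $O_B\cap O_{B^d}=\emptyset$. These two sets are thus disjoint, and they are open as well, being unions of cozero sets of continuous functions.

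Next I would check that $O_B$ and $O_{B^d}$ together cover $O_L$. Given $x\in O_L$, choose $f\in L$ with $f(x)\neq 0$ and use the band projection to write $f=f_1+f_2$ with $f_1\in B$ and $f_2\in B^d$. Since $f_1\perp f_2$ their cozero sets are disjoint, so at the point $x$ at most one of $f_1(x),f_2(x)$ can be nonzero; as $f(x)=f_1(x)+f_2(x)\neq 0$, one of them is nonzero, whence $x\in O_{f_1}\cup O_{f_2}\subseteq O_B\cup O_{B^d}$. Combined with the trivial inclusions $O_B,O_{B^d}\subseteq O_L$ this gives $O_L=O_B\cup O_{B^d}$, a disjoint union of open sets.

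Finally I would invoke the hypothesis: a connected set cannot be partitioned into two nonempty relatively open pieces, so one of $O_B,O_{B^d}$ must be empty. An empty $O_B$ forces every element of $B$ to have empty cozero set, hence $B=\{0\}$; an empty $O_{B^d}$ forces $B^d=\{0\}$ and therefore $L=B\oplus B^d=B$. I do not anticipate a genuinely hard step here; the only point deserving a moment's care is the decomposition $f=f_1+f_2$ into disjoint components, which is legitimate precisely because $B$, being a band, is in particular an order ideal, so its band projection produces pieces that really are disjoint and sum to $f$.
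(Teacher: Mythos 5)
Your proposal is correct and follows essentially the same route as the paper's own proof: disjointness in the sublattice translates to disjoint cozero sets, the band decomposition $L=B\oplus B^d$ yields the disjoint open cover $O_L=O_B\cup O_{B^d}$, and connectedness forces one piece to be empty. The only (immaterial) difference is that you work with a general $f$ satisfying $f(x)\neq 0$, whereas the paper passes to $|f|\in L_+$ and uses positivity of the two components.
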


\begin{corollary}If $|A|\ge 2$ then the only projection bands in $FBL(A)$ are $\{0\}$ and $FBL(A)$.
\begin{proof}By Corollary \ref{fnrepn} we may identify $FBL(A)$ with a vector sublattice of $H(\Delta_A)\subset C(\Delta_A)$. Observe that
\[O_{FBL(A)}\supset \bigcup_{a\in A}O_{\delta_a}=\bigcup_{a\in A} \{\xi\in \Delta_A:\xi(a)\ne 0\}=\Delta_A\setminus\{0\}.\]
Clearly,  $O_{FBL(A)}\subset \Delta_A\setminus\{0\}$ so that $O_{FBL(A)}= \Delta_A\setminus\{0\}$ which, provided $|A|\ge 2$, is (pathwise) connected.
\end{proof}
\end{corollary}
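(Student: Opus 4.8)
The plan is to deduce this as a direct application of the preceding Proposition, with $X=\Delta_A$ and $L=FBL(A)$, the latter regarded via Corollary \ref{fnrepn} as a vector sublattice of $H(\Delta_A)\subset C(\Delta_A)$. Since $\Delta_A=[-1,1]^A$ is a (compact) Hausdorff space in the product topology, the hypotheses of that Proposition are satisfied except for the connectedness of the open set $O_{FBL(A)}$. Thus everything reduces to computing $O_{FBL(A)}$ and checking that it is connected, after which the conclusion that the only projection bands are $\{0\}$ and $FBL(A)$ is immediate.

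First I would pin down $O_{FBL(A)}$ exactly. Each generator $\delta_a$ acts on $\Delta_A$ by $\delta_a(\xi)=\xi(a)$, so $O_{\delta_a}=\{\xi\in\Delta_A:\xi(a)\ne 0\}$, and taking the union over $a\in A$ gives precisely $\Delta_A\setminus\{0\}$; hence $O_{FBL(A)}\supseteq\Delta_A\setminus\{0\}$. For the reverse inclusion I would invoke homogeneity: every $f\in H(\Delta_A)$ vanishes at the origin (take $t=0$ in the relation $f(t\xi)=tf(\xi)$), so $0\notin O_f$ for every $f\in FBL(A)$ and therefore $0\notin O_{FBL(A)}$. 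This yields $O_{FBL(A)}=\Delta_A\setminus\{0\}$.

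The substance of the argument, and the step I expect to be the main obstacle, is showing that $\Delta_A\setminus\{0\}$ is connected when $|A|\ge 2$. I would prove path-connectedness directly. Given $\xi,\eta\in\Delta_A\setminus\{0\}$, the straight-line segment $t\mapsto(1-t)\xi+t\eta$ stays inside the convex set $\Delta_A$ and is continuous for the product topology (convergence there is coordinatewise), so it serves as the required path unless it meets $0$. That exceptional case forces $0=(1-s)\xi+s\eta$ with $s\in(0,1)$, so $\xi$, $\eta$ and $0$ are collinear on a single line $\Real\xi$. Here $|A|\ge 2$ is used: the cube $\Delta_A$ is then not contained in $\Real\xi$, so I can pick an auxiliary point $r\in\Delta_A$ off that line (for instance a coordinate indicator at one of two distinct indices) and route $\xi\to r\to\eta$. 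Each segment $[\xi,r]$ and $[r,\eta]$ avoids $0$, since a point of $\Real\xi$ together with a point off $\Real\xi$ spans a segment through $0$ only if the off-line endpoint itself lay on $\Real\xi$, contrary to the choice of $r$.

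The remaining delicate point is merely that this reasoning is carried out in $\Real^A$ with the product topology when $A$ is infinite, but since all that is used is convexity of $\Delta_A$ and coordinatewise continuity of line segments, the finite-dimensional argument transfers verbatim. Finally, the hypothesis $|A|\ge 2$ is sharp: when $|A|=1$ we have $\Delta_A\setminus\{0\}=[-1,0)\cup(0,1]$, which is disconnected, so no such conclusion is available.
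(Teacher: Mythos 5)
Your proposal is correct and follows essentially the same route as the paper: identify $FBL(A)$ inside $H(\Delta_A)\subset C(\Delta_A)$, compute $O_{FBL(A)}=\Delta_A\setminus\{0\}$ (the lower inclusion from the generators, the upper from homogeneity at $0$), and invoke the preceding Proposition. The only difference is that you spell out the path-connectedness of $\Delta_A\setminus\{0\}$ for $|A|\ge 2$, which the paper asserts without proof; your two-segment detour argument for collinear points is sound and is a worthwhile detail to have.
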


\begin{corollary}If $|A|\ge 2$ then $FBL(A)$ is not Dedekind $\sigma$-complete.
\end{corollary}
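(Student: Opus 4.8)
The plan is to deduce non-Dedekind-$\sigma$-completeness from the preceding corollary, which asserts that the only projection bands in $FBL(A)$ (for $|A|\ge 2$) are $\{0\}$ and $FBL(A)$ itself. The key observation is that in a Dedekind $\sigma$-complete Banach lattice there are typically an abundance of projection bands, so the severe scarcity of projection bands established above should be incompatible with Dedekind $\sigma$-completeness. The cleanest route exploits the classical fact that in a Dedekind $\sigma$-complete vector lattice, every band generated by a single element is a projection band (more generally, every principal band is a projection band). Thus if $FBL(A)$ were Dedekind $\sigma$-complete, then for each $0\ne f\in FBL(A)$ the principal band $B_f$ would be a projection band, and so by the previous corollary we would have $B_f=\{0\}$ or $B_f=FBL(A)$; since $f\ne0$ forces $B_f\ne\{0\}$, we would conclude $B_f=FBL(A)$ for \emph{every} nonzero $f$.

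First I would fix two distinct indices $a,b\in A$ (possible since $|A|\ge2$) and work inside the representation $FBL(A)\subseteq H(\Delta_A)$ furnished by Corollary \ref{fnrepn}. The goal is to exhibit a single nonzero element $f$ whose principal band is proper, contradicting the conclusion $B_f=FBL(A)$ just derived. A natural candidate is $f=|\delta_a|\wedge|\delta_b|$, or perhaps simply $f=|\delta_a|$; one checks that its carrier $O_f=\{\xi\in\Delta_A:\xi(a)\ne0\}$ (respectively the intersection of two such sets) is a proper open subset of $\Delta_A\setminus\{0\}$. Because disjointness in $H(\Delta_A)$ corresponds to having disjoint carriers, the band $B_f$ cannot be all of $FBL(A)$: there exists $g\in FBL(A)$, for instance $g=|\delta_c|$ for a third suitably chosen point or $g$ supported where $\xi(a)=0$, which is not in the closure of the ideal generated by $f$, witnessing $B_f\ne FBL(A)$. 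This contradiction completes the argument.

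The step I expect to be the main obstacle is verifying rigorously that the chosen principal band $B_f$ is genuinely proper, i.e. producing an explicit nonzero $g\in FBL(A)$ that is disjoint from $f$ yet not orthogonal to everything. In the function-lattice picture this amounts to finding $g\in FBL(A)$ with $O_g\cap O_f=\emptyset$ and $O_g\ne\emptyset$; one must ensure such a $g$ actually lies in the sublattice $FBL(A)$ (not merely in $C(\Delta_A)$), which is where the homogeneity constraint and the generation by the $\delta_a$ must be used carefully. If $|A|\ge3$ this is transparent (take $f=|\delta_a|$ and $g=|\delta_c|$ for $a\ne c$, noting $O_{\delta_a}\cap O_{\delta_c}\ne\emptyset$ so one instead uses a disjoint pair built from $(|\delta_a|-|\delta_b|)^+$ and $(|\delta_b|-|\delta_a|)^+$), but the case $|A|=2$ requires the slightly more delicate choice $f=(|\delta_a|-|\delta_b|)^+$ and $g=(|\delta_b|-|\delta_a|)^+$, which are nonzero, mutually disjoint elements of $FBL(A)$, each generating a proper band. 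Once a single proper principal band is exhibited, the contradiction with Dedekind $\sigma$-completeness is immediate, and the proof is complete.
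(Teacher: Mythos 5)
Your argument is correct and is precisely the deduction the paper intends: Dedekind $\sigma$-completeness implies the principal projection property, so the preceding corollary (only trivial projection bands) gives a contradiction as soon as one exhibits a nonzero proper principal band, which your disjoint pair $(|\delta_a|-|\delta_b|)^+$ and $(|\delta_b|-|\delta_a|)^+$ provides. One caution: your earlier candidates $|\delta_a|$ and $|\delta_a|\wedge|\delta_b|$ fail in \emph{all} cases, not just when $|A|=2$, since each is a weak order unit of $FBL(A)$ (its carrier's complement has empty interior in $\Delta_A$) and hence generates the whole space as a band, so the fallback to the disjoint pair is essential throughout.
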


\begin{corollary}If $|A|\ge 2$ then $FBL(A)$ has no atoms.
\begin{proof}
The linear span of an atom is always a projection band.
\end{proof}
\end{corollary}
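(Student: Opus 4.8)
The plan is to argue by contradiction, exploiting the preceding corollary that the only projection bands in $FBL(A)$ are $\{0\}$ and $FBL(A)$ when $|A|\ge 2$. So I would suppose that $a\in FBL(A)$ is an atom, i.e. $a>0$ and $0\le x\le a$ forces $x=\lambda a$ for some $\lambda\in[0,1]$, and aim to show that its linear span $\Real a$ is a nontrivial projection band, which is then absurd.

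The one nonroutine ingredient is the general fact that the span of an atom in any Archimedean vector lattice is a projection band; since $FBL(A)$ is a Banach lattice it is Archimedean, so this applies. I would establish it as follows. First, $\Real a$ is an ideal: if $|x|\le|\lambda|a$ then $|x|=\mu a$ by the atom property, and writing $x=x^+-x^-$ with $x^+,x^-\le\mu a$ disjoint, the atom property forces one of $x^\pm$ to vanish, so $x\in\Real a$. Next, for $0\le x$ the element $x\wedge na$ equals $c_n a$ with $c_n\in[0,n]$ increasing (again by the atom property applied to $\tfrac1n x\wedge a$). The sequence $(c_n)$ must be bounded, for were it unbounded we would have $na\le x$ for all $n$, forcing $a\le 0$ by the Archimedean property. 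Hence $c_n\uparrow c<\infty$, and a further appeal to the Archimedean property gives $(c-c_n)a\downarrow 0$, so that $\sup_n(x\wedge na)=ca\in\Real a$. Thus the band component of every positive, hence of every, element along $a$ exists and lies in $\Real a$, which is precisely the assertion that $\Real a$ is a projection band.

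Finally I would note that this band is nontrivial: it is nonzero because $a\neq 0$, and it is proper because $FBL(A)$ has dimension at least two. Indeed, for distinct $a,b\in A$ the generators $\delta_a,\delta_b$ are linearly independent, since $\delta_a(\xi)=\xi(a)$ and $\delta_b(\xi)=\xi(b)$ are non-proportional functions on $\Delta_A$. This contradicts the previous corollary, so no atom can exist. The main obstacle is the projection-band fact for atoms; once that is in hand the conclusion is immediate, which is exactly why the result can be disposed of in a single line.
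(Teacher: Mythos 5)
Your proposal is correct and takes exactly the paper's route: the paper's one-line proof rests on the same fact that the linear span of an atom is always a projection band, which combined with the preceding corollary (only $\{0\}$ and $FBL(A)$ are projection bands when $|A|\ge 2$) gives the contradiction. You have simply written out the standard Archimedean-lattice proof of that fact and the easy nontriviality check ($\Real a$ is one-dimensional while $\dim FBL(A)\ge 2$) that the paper leaves implicit.
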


\begin{corollary}If $a\in A$ then $\delta_a$ is a weak order unit for $FBL(A)$.
\begin{proof}If $f\in FBL(A)$ and $f\perp \delta_a$ then $O_f\subset \{\xi\in\Delta_A:\xi(a)=0\}$, and the latter set has an empty interior so that $O_f=\emptyset$
and hence $f=0$.
\end{proof}
\end{corollary}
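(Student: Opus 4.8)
The plan is to work entirely within the functional representation of $FBL(A)$ as a vector sublattice of $H(\Delta_A)\subset C(\Delta_A)$ furnished by Corollary \ref{fnrepn}. Being a weak order unit amounts to the disjoint complement $\{\delta_a\}^d$ being trivial, i.e. to showing that any $f\in FBL(A)$ with $|f|\wedge|\delta_a|=0$ must be $0$. The advantage of the $C(\Delta_A)$-picture is that disjointness of continuous functions is a pointwise condition: $|f|\wedge|\delta_a|=0$ holds exactly when $\min\{|f(\xi)|,|\delta_a(\xi)|\}=0$ for every $\xi\in\Delta_A$.

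First I would translate this pointwise condition into a statement about the support $O_f=\{\xi:f(\xi)\ne0\}$. Since $\delta_a(\xi)=\xi(a)$, the condition says that whenever $\xi(a)\ne0$ we must have $f(\xi)=0$; equivalently $O_f\subseteq\{\xi\in\Delta_A:\xi(a)=0\}$. As $f$ is continuous, $O_f$ is open in $\Delta_A$.

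The crux is then a purely topological observation: the slice $\{\xi\in\Delta_A:\xi(a)=0\}$ has empty interior in $\Delta_A=[-1,1]^A$. This is where I expect the only real work to lie, though it is short. A basic open set for the product topology constrains only finitely many coordinates, say those indexed by a finite $F\subset A$; given any such basic open set meeting the slice, one can perturb the $a$-th coordinate (which is either unconstrained, or constrained to an open interval that necessarily contains nonzero values whenever it meets $\{\xi(a)=0\}$) to obtain a point with $\xi(a)\ne0$ still lying in the open set. Hence no nonempty open set is contained in the slice.

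Combining the two, $O_f$ is an open subset of a set with empty interior, so $O_f=\emptyset$ and therefore $f=0$. This shows $\{\delta_a\}^d=\{0\}$, i.e. $\delta_a$ is a weak order unit. The main obstacle---really the only non-formal step---is the empty-interior claim, and it is handled by the finite-support description of product-topology neighbourhoods.
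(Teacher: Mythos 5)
Your proposal is correct and follows essentially the same route as the paper: both pass to the representation of $FBL(A)$ inside $C(\Delta_A)$, note that disjointness from $\delta_a$ forces $O_f\subseteq\{\xi\in\Delta_A:\xi(a)=0\}$, and conclude from the fact that this slice has empty interior that $O_f=\emptyset$, hence $f=0$. The only difference is that you spell out the (easy) empty-interior claim via finitely-supported basic open sets, which the paper leaves as an assertion.
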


\begin{corollary}\label{ctbledisjt}Every disjoint system in $FBL(A)$ is at most countable.
\begin{proof}If $\{u_i:i\in I\}$ is a disjoint family of strictly positive elements of $FBL(A)$ then the corresponding sets $O_{u_i}$ are non-empty disjoint subsets of $\Delta_A$.
As $\Delta_A=[-1,1]^A$ is a product of separable spaces, Theorem 2 of \cite{RS} tells us that $\Delta_A$ can contain only countably many disjoint non-empty open sets
so that the families
of all $O_{u_i}$ and of all $u_i$ are indeed countable.
\end{proof}
\end{corollary}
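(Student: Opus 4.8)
The plan is to move the problem out of the Banach lattice and into the topology of the representation space $\Delta_A$, where it becomes a statement about disjoint open sets, and then to invoke a chain-condition theorem for products of separable spaces.

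First I would normalize the given disjoint system $\{u_i:i\in I\}$. Discarding any zero members and replacing each remaining $u_i$ by $|u_i|$ leaves both the disjointness relation and the sets $O_{u_i}$ unchanged (since $u_i\perp u_j$ iff $|u_i|\wedge|u_j|=0$, and $O_{u_i}=O_{|u_i|}$), so I may assume that every $u_i$ is strictly positive. Using Corollary \ref{fnrepn} I regard $FBL(A)$ as a vector sublattice of $C(\Delta_A)$; since finite lattice operations in such a sublattice are computed pointwise, the relation $u_i\wedge u_j=0$ for $i\neq j$ says exactly that $\min\{u_i(\xi),u_j(\xi)\}=0$ for every $\xi\in\Delta_A$, i.e.\ $O_{u_i}\cap O_{u_j}=\emptyset$. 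Each $O_{u_i}$ is a nonempty open subset of $\Delta_A$, so $\{O_{u_i}:i\in I\}$ is a pairwise disjoint family of nonempty open sets, indexed injectively by $I$ (distinct indices give disjoint, nonempty, hence distinct, supports).

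The decisive ingredient is then purely topological: $\Delta_A=[-1,1]^A$ is a product of the separable spaces $[-1,1]$, and a product of separable spaces satisfies the countable chain condition, so any family of pairwise disjoint nonempty open subsets is at most countable. Granting this, the family $\{O_{u_i}\}$, and hence $I$, is countable, which is the assertion. I expect this chain-condition fact---available from \cite{RS}---to be the only nonroutine step; the reduction to positive elements and the passage from lattice disjointness to disjointness of supports are both immediate once $FBL(A)$ is viewed inside $C(\Delta_A)$.
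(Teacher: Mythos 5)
Your proposal is correct and follows the paper's own argument essentially verbatim: identify $FBL(A)$ with a sublattice of $C(\Delta_A)$ via Corollary \ref{fnrepn}, translate lattice disjointness into disjointness of the open supports $O_{u_i}$, and apply the Ross--Stone theorem that a product of separable spaces satisfies the countable chain condition. The only difference is that you spell out the routine normalization and the pointwise computation of lattice operations, which the paper leaves implicit.
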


The same result is true for $FVL(A)$, being first proved by Weinberg in \cite{We2}. It can also be found, with essentially the current proof, in \cite{Ba}.

Recall that an Archimedean vector lattice is \emph{order separable} if every subset $D\subset L$ contains an at most countable subset with the same upper 
bounds in $L$ as $D$ has. This is equivalent to every order bounded disjoint family of non-zero elements being at most countable, \cite{LZ}, Theorem 29.3.
 Corollary \ref{ctbledisjt}
thus actually tells us that the universal completion of $FBL(A)$, \cite{LZ}, Definition 50.4,   is always order separable.

Every Banach lattice is a quotient of a free Banach lattice. We can actually make this statement quite precise. The following lemma is well known dating back, in the case that $\carda=\aleph_0$, to a result of Banach and Mazur \cite{BM}. A more accessible proof, again in the case that $\carda=\aleph_0$ (although the modifications needed for the general case are minor), are given as part of the proof of Theorem 5 of Chapter VII of \cite{Di}.

\begin{lemma}\label{BSlemma}Let $X$ be a Banach space and $D$ a dense subset of the unit ball of $X$. If $x\in X$ and $\|x\|<1$ then there are sequences $(x_n)$ in $D$ and $(\alpha_n)$ in $\Real$
such that $\sum_{n=1}^\infty |\alpha_n|<1$ and $x=\sum_{n=1}^\infty \alpha_n x_n$.
\end{lemma}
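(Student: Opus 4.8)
The plan is to run a greedy successive-approximation scheme, peeling off the norm of the current residual at each stage. The case $x=0$ is trivial (take all $\alpha_n=0$), so assume $x\ne 0$. Since $\|x\|<1$, I would first fix once and for all a parameter $\eta$ with $0<\eta<1-\|x\|$; this choice is available precisely because the hypothesis is the strict inequality $\|x\|<1$. The one elementary fact I would isolate is that density of $D$ in the unit ball lets us approximate any direction: for every nonzero $v\in X$ and every $\delta>0$ there is $d\in D$ with $\|v-\|v\|\,d\|\le\delta\|v\|$, obtained by approximating the unit vector $v/\|v\|$ by an element of $D$ to within $\delta$ and then scaling by $\|v\|$.

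Next I would build the sequences by induction. Set $r_0=x$. Given a nonzero residual $r_k$, put $\alpha_{k+1}=\|r_k\|$, use the fact above (with $\delta=\eta$) to choose $x_{k+1}\in D$ with $\|r_k-\|r_k\|\,x_{k+1}\|\le\eta\|r_k\|$, and define $r_{k+1}=r_k-\alpha_{k+1}x_{k+1}$ (if some $r_k=0$ the process terminates and we pad the sequences with zeros). By construction $\|r_{k+1}\|\le\eta\|r_k\|$, hence $\|r_k\|\le\eta^k\|x\|\to 0$. Since the partial sums of $\sum\alpha_n x_n$ are exactly $x-r_k$, this shows the series converges in norm to $x$, giving $x=\sum_{n=1}^\infty\alpha_n x_n$ with each $x_n\in D$.

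Finally I would verify the coefficient bound. As $\alpha_{k+1}=\|r_k\|\le\eta^k\|x\|$ and every $\alpha_n\ge 0$, summing the geometric series gives $\sum_{n=1}^\infty|\alpha_n|\le\|x\|\sum_{k=0}^\infty\eta^k=\|x\|/(1-\eta)$, and the choice $\eta<1-\|x\|$ (equivalently $1-\eta>\|x\|$) forces this to be strictly below $1$, as required. The only point needing any care, and the sole place where the strictness of $\|x\|<1$ is used, is exactly this calibration of $\eta$ so that the geometric series of coefficients stays under $1$; the convergence of the series and the approximation step are otherwise routine.
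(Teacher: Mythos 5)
Your proof is correct and is essentially the classical successive-approximation (geometric residual) argument of Banach and Mazur, which is exactly the proof the paper itself relies on: the paper does not prove the lemma but cites \cite{BM} and the proof of Theorem 5 of Chapter VII of \cite{Di}, where this same scheme of peeling off the norm of the residual and calibrating the tolerance so the coefficient series stays below $1$ is carried out. There is no substantive difference in approach.
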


\begin{proposition}\label{freequotients}Let $X$ be a Banach lattice. If $D$ is a dense subset of the unit ball of $X$ of cardinality $\carda$, then there is 
a closed
ideal $J$ in $FBL(\carda)$ such that $X$ is isometrically order isomorphic to $FBL(\carda)/J$.
\begin{proof}Let $D=\{x_a:a\in\carda\}$. By the definition of a free Banach lattice there is a unique contractive lattice homomorphism
$T:FBL(\carda)\to X$ with $T(\delta_a)=x_a$ for each $a\in \carda$. If $x\in X$ with $\|x\|<1$ then Lemma \ref{BSlemma} gives us sequences $(x_{a_n})$ in $D$ and
$(\alpha_n)$ in $\Real$ with $\sum_{n=1}^\infty |\alpha_n|<1$ and $x=\sum_{n=1}^\infty \alpha_n x_{a_n}$. If we define $f\in FBL(\carda)$ by
$f=\sum_{n=1}^\infty\alpha_n \delta_{a_n}$, noting that this series converges absolutely, then $\|f\|_F<1$ and $Tf=x$. This shows that $T$ maps the open unit ball in $FBL(\carda)$
onto the open unit ball in $X$. In particular, $T$ is surjective.

Take $J$ to be the kernel of $T$ and let $Q:FBL(\carda)\to FBL(\carda)/J$ be the quotient map. Let $U:FBL(\carda)/J\to X$ be defined by $U(Qf)=Tf$ for
$f\in FBL(\carda)$, which is clearly well-defined. It is also clear that $U$ is a contractive lattice isomorphism. As $T$ maps the open unit ball of $FBL(\carda)$
onto the open unit ball of $X$ and $Q$ maps the open unit ball of $FBL(\carda)$ onto the open unit ball of $FBL(\carda)/J$, it follows that $U$ maps the open unit ball of $FBL(\carda)/J$
onto the open unit ball of $X$ so that $U$ is an isometry.
\end{proof}
\end{proposition}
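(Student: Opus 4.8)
The plan is to exhibit $X$ as a quotient of $FBL(\carda)$ by using the free property to build a surjection $T\colon FBL(\carda)\to X$, and then check that $T$ maps the open unit ball onto the open unit ball, which forces the induced map on the quotient to be an isometry. First I would enumerate the dense set as $D=\{x_a:a\in\carda\}$, noting each $\|x_a\|\le 1$, and invoke the universal property of $FBL(\carda)$ to obtain the unique contractive lattice homomorphism $T$ with $T(\delta_a)=x_a$. Contractivity gives $T\big(\text{open ball}\big)\subseteq\text{open ball of }X$ immediately.

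The key step is the reverse inclusion on open balls. Here I would take $x\in X$ with $\|x\|<1$ and apply Lemma \ref{BSlemma} to the dense set $D$ in the unit ball of $X$, producing sequences $(x_{a_n})\subseteq D$ and scalars $(\alpha_n)$ with $\sum_n|\alpha_n|<1$ and $x=\sum_n\alpha_n x_{a_n}$. The natural candidate preimage is $f=\sum_{n=1}^\infty\alpha_n\delta_{a_n}\in FBL(\carda)$. Because $\|\delta_{a_n}\|_F=1$ and $\sum_n|\alpha_n|<1$, the series converges absolutely in the Banach lattice $FBL(\carda)$, giving $\|f\|_F\le\sum_n|\alpha_n|<1$; and continuity of $T$ together with $T(\delta_{a_n})=x_{a_n}$ yields $Tf=\sum_n\alpha_n x_{a_n}=x$. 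Thus $T$ carries the open unit ball of $FBL(\carda)$ \emph{onto} the open unit ball of $X$; in particular $T$ is surjective.

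Finally I would set $J=\ker T$, a closed ideal since $T$ is a continuous lattice homomorphism, and let $Q\colon FBL(\carda)\to FBL(\carda)/J$ be the quotient map. The map $U(Qf)=Tf$ is well defined on $FBL(\carda)/J$, and it is a contractive lattice isomorphism. Since $Q$ maps the open unit ball of $FBL(\carda)$ \emph{onto} the open unit ball of the quotient (standard for quotient maps) and $T$ does the same onto the open unit ball of $X$, the factorization $T=U\circ Q$ forces $U$ to map the open unit ball of $FBL(\carda)/J$ onto the open unit ball of $X$, whence $U$ is an isometry and therefore an isometric order isomorphism.

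I expect the main obstacle to be the open-ball surjectivity rather than any formal algebra: the whole argument hinges on choosing the representation of $x$ with $\ell_1$-summable coefficients (Lemma \ref{BSlemma}) so that the lift $f$ is genuinely in $FBL(\carda)$ with controlled norm, and on verifying that absolute convergence of $\sum_n\alpha_n\delta_{a_n}$ in $FBL(\carda)$ legitimizes passing $T$ through the infinite sum. Once that open-mapping-on-balls statement is secured, the isometry of $U$ is immediate from the commuting diagram.
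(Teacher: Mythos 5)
Your proposal is correct and follows essentially the same route as the paper: the universal property gives the contractive lattice homomorphism $T$, Lemma \ref{BSlemma} supplies the $\ell_1$-summable representation so that $f=\sum_n\alpha_n\delta_{a_n}$ lifts $x$ with $\|f\|_F<1$, and the open-ball-onto-open-ball argument forces the induced map $U$ on $FBL(\carda)/\ker T$ to be an isometric order isomorphism. No gaps; the only (harmless) addition is your explicit remark that contractivity gives the forward inclusion of open balls, which the paper leaves implicit.
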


\begin{corollary}Let $X$ be a Banach lattice. If $D$ is a dense subset of the unit ball of $X$ of cardinality $\carda$, then $FBL(\carda)^*$ contains a weak$^*$-closed band which is isometrically order isomorphic to $X^*$.
\begin{proof}
If $T:FBL(\carda)\to X$ is the quotient map from Proposition \ref{freequotients} then $T^*:X^*\to FBL(\carda)^*$ is an isometry and its range,
which is $\ker(T)^\perp$, is a weak$^*$-closed band. As $T$ is a surjective lattice homomorphism, $T^*$ is actually a lattice isomorphism.
\end{proof}
\end{corollary}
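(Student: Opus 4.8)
The plan is to show that the adjoint $T^*$ of the quotient map $T\colon FBL(\carda)\to X$ from Proposition \ref{freequotients} provides the desired identification. First I would recall that $T$ is a surjective contractive lattice homomorphism which, as established in that proposition, maps the open unit ball of $FBL(\carda)$ \emph{onto} the open unit ball of $X$. The range of $T^*$ is exactly the annihilator $\ker(T)^\perp=\{\phi\in FBL(\carda)^*:\phi_{|\ker(T)}\equiv 0\}$; this is a standard duality fact for a surjective bounded operator, and being an annihilator of an ideal, it is weak$^*$-closed. Since $\ker(T)=J$ is a closed \emph{ideal} (not merely a closed subspace), its annihilator is in fact a band in $FBL(\carda)^*$, which gives the band structure we want.

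The key computational step is that $T^*$ is an isometry onto its range. This follows directly from the fact that $T$ is a metric surjection (a quotient map in the isometric sense): for a surjective operator whose open unit ball maps onto the open unit ball of the target, the adjoint is isometric onto its range. Concretely, for $\phi\in X^*$ one verifies $\|T^*\phi\|=\sup\{|\phi(Tf)|:\|f\|_F<1\}=\sup\{|\phi(x)|:\|x\|<1\}=\|\phi\|$, using precisely that $T$ carries the open unit ball of $FBL(\carda)$ onto that of $X$. This handles the isometry half of the claim and identifies $T^*$ as an isometric isomorphism of $X^*$ onto $\ker(T)^\perp$.

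It remains to upgrade this isometric linear isomorphism to an isometric \emph{order} isomorphism, i.e.\ to a lattice isomorphism. Here I would invoke that $T$ is a surjective lattice homomorphism. The adjoint of a lattice homomorphism is a positive, interval-preserving operator, and when the homomorphism is surjective its adjoint is injective and an order isomorphism onto its range; this is exactly the kind of dual behaviour recorded in the earlier duality proposition in the excerpt (the one identifying $P^*X^*$ with $Y^*$ for a contractive lattice homomorphism projection). Thus $T^*\colon X^*\to\ker(T)^\perp$ preserves the lattice operations, and combined with the isometry from the previous step we conclude that $\ker(T)^\perp=T^*X^*$ is a weak$^*$-closed band in $FBL(\carda)^*$ that is isometrically order isomorphic to $X^*$.

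I expect the main obstacle to be the order-theoretic half: verifying carefully that $T^*$ is a lattice homomorphism onto its range rather than merely a positive isometry. The isometry and the weak$^*$-closedness are essentially formal consequences of $T$ being a metric surjection, but confirming that the lattice structure transfers requires using surjectivity of the lattice homomorphism $T$ to see that $T^*$ respects suprema and infima; the cleanest route is to cite the interval-preserving property of $T^*$ together with injectivity to obtain an order isomorphism onto the band.
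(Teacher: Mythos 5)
Your proposal is correct and follows essentially the same route as the paper: take the quotient map $T$ from Proposition \ref{freequotients}, note that $T^*$ is an isometry because $T$ maps the open unit ball onto the open unit ball, identify the range of $T^*$ with the weak$^*$-closed band $\ker(T)^\perp$ (the annihilator of a closed ideal), and use surjectivity of the lattice homomorphism $T$ to see that $T^*$ is an order isomorphism onto that band. The extra detail you supply (the explicit isometry computation and the interval-preserving property of adjoints of lattice homomorphisms) is exactly what the paper's terse proof leaves implicit.
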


In particular note:

\begin{corollary}\label{ellinftyindual}
If $\carda$ is any  cardinal then there is a weak$^*$-closed band in $FBL(\carda)^*$ which is isometrically order isomorphic to $\ell_\infty(\carda)$.
\begin{proof}
If $\carda$ is infinite then we need merely note that the unit ball of  $\ell_1(\carda)$ has a dense subset of cardinality $\carda$ and that $\ell_\infty(\carda)$ may be identified with $\ell_1(\carda)^*$.

Suppose that $\card(A)=\carda$ is finite. For $a\in A$ we will write $\xi_a$ for
that element of $\Delta_A=[-1,1]^A$ with $\xi_a(a)=1$ and $\xi_a(b)=0$
if $a\ne b$. If $b\in A$ then
$|\delta_b|(\xi_a)=|\delta_b(\xi_a)|=1$ if $a=b$ and is zero if
$a\ne b$. It follows from the Proposition \ref{whatisdual} that the
functional $f\mapsto f(\xi_a)$ is a lattice homomorphism on $FBL(A)$, and therefore an atom of $FBL(A)^*$, of norm one. Finite sums of such maps also have norm one. This embeds a copy of $\ell_\infty(A)$ isometrically onto an order ideal in $FBL(A)^*$ which, as it is finite dimensional, is certainly a weak$^*$-closed band.
\end{proof}

\end{corollary}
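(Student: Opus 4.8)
The plan is to split into the cases $\carda$ infinite and $\carda$ finite, since the quotient-based construction that delivers the infinite case degenerates when $\carda$ is finite (no Banach lattice with a finite dense subset of its unit ball is nonzero). When $\carda$ is infinite the result is immediate from the preceding corollary: the unit ball of $\ell_1(\carda)$ has a dense subset of cardinality $\carda$ (for instance the finitely supported sequences with rational entries, of which there are $\carda\cdot\aleph_0=\carda$), and since $\ell_\infty(\carda)$ is isometrically order isomorphic to $\ell_1(\carda)^*$, applying the preceding corollary with $X=\ell_1(\carda)$ produces a weak$^*$-closed band in $FBL(\carda)^*$ isometrically order isomorphic to $\ell_\infty(\carda)$.

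For the finite case, say $\card(A)=\carda=n$, I would build the band by hand from point evaluations at the ``vertices'' of $\Delta_A$. For each $a\in A$ let $\xi_a\in\Delta_A=[-1,1]^A$ be the point with $\xi_a(a)=1$ and $\xi_a(b)=0$ for $b\neq a$, and let $\omega_a\in FBL(A)^*$ be evaluation at $\xi_a$, i.e.\ $\omega_a(f)=Jf(\xi_a)$. By Theorem \ref{BThm01} (with $\lambda=1$) each $\omega_a$ is a positive lattice homomorphism, hence an atom of $FBL(A)^*$. Passing to the identification of $FBL(A)^*$ with $\big(FVL(A)^\dag,\|\cdot\|^\dag\big)$ from Proposition \ref{whatisdual}, one has $\omega_a(|\delta_b|)=|\delta_b(\xi_a)|=|\xi_a(b)|$, which is $1$ when $b=a$ and $0$ otherwise, so $\|\omega_a\|^\dag=1$.

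The crux is the norm of a general linear combination. Because the $\xi_a$ lie on distinct rays, the $\omega_a$ are pairwise disjoint, so for scalars $(c_a)_{a\in A}$ one has $\big|\sum_a c_a\omega_a\big|=\sum_a|c_a|\omega_a$, whence
\[
\Big\|\sum_{a\in A}c_a\omega_a\Big\|^\dag=\sup_{b\in A}\sum_{a\in A}|c_a|\,|\xi_a(b)|=\max_{a\in A}|c_a|.
\]
Thus $(c_a)\mapsto\sum_a c_a\omega_a$ is a linear, order, and isometric embedding of $\ell_\infty(A)=\ell_\infty(\carda)$ onto the linear span of $\{\omega_a:a\in A\}$. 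That span is the order ideal generated by finitely many disjoint atoms, hence a projection band, and being finite dimensional it is automatically weak$^*$-closed, which finishes the finite case.

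The main obstacle I anticipate is precisely this last computation in the finite case: one must check that the evaluations at the distinct vertices $\xi_a$ are genuinely disjoint in $FVL(A)^\sim$ (or else argue the equality directly, by testing $\big|\sum_a c_a\omega_a\big|(|\delta_b|)$ against the element $(\operatorname{sgn}c_b)\,\delta_b$, which lies in the order interval $[-|\delta_b|,|\delta_b|]$ and realizes the value $|c_b|$) and that the explicit form of $\|\cdot\|^\dag$ collapses the supremum over $b$ to $\max_a|c_a|$. Everything else --- the dual identification via Proposition \ref{whatisdual}, the atomicity of the $\omega_a$ from Theorem \ref{BThm01}, and the automatic weak$^*$-closedness of a finite-dimensional ideal --- is routine once this computation is in hand.
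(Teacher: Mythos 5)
Your proposal is correct and follows essentially the same route as the paper: the infinite case via the preceding corollary applied to $\ell_1(\carda)$, and the finite case via the vertex evaluations $\omega_a(f)=Jf(\xi_a)$, which are disjoint norm-one atoms spanning an isometric copy of $\ell_\infty(A)$ that is a finite-dimensional (hence weak$^*$-closed) band. In fact your disjointness computation showing $\bigl\|\sum_a c_a\omega_a\bigr\|^\dag=\max_a|c_a|$ spells out the step the paper compresses into ``finite sums of such maps also have norm one.''
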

\begin{corollary}\label{sepfreequotients}If $X$ is a separable Banach lattice then $X$ is isometrically order isomorphic to a Banach lattice quotient of $FBL(\aleph_0)$ and $X^*$ is isometrically order isomorphic to a weak$^*$-closed band in $FBL(\aleph_0)^*$.
\end{corollary}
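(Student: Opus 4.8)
The plan is to read this off directly from Proposition \ref{freequotients} and the corollary immediately preceding the statement, both instantiated at $\carda=\aleph_0$; the only substantive thing to check is that a separable Banach lattice admits a dense subset of its unit ball whose cardinality is exactly $\aleph_0$, since that is the index set defining $FBL(\aleph_0)$. So the first thing I would do is dispose of the trivial case $X=\{0\}$ and then, assuming $X\neq\{0\}$, produce such a set. Since $X$ is separable, its unit ball $X_1$ is a subset of a separable metric space and is therefore itself separable, so it has a countable dense subset $D$. On the other hand $X_1$ is uncountable, as it contains the segment $\{tx:0\le t\le 1\}$ for any norm-one $x$, so no finite set can be dense in $X_1$; hence $D$ is necessarily countably infinite and $\card(D)=\aleph_0$.

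With this $D$ in hand I would simply invoke the two results already established. Proposition \ref{freequotients}, applied with $\carda=\aleph_0$, produces a closed ideal $J$ in $FBL(\aleph_0)$ and an isometric order isomorphism of $X$ onto $FBL(\aleph_0)/J$, which gives the first assertion. For the dual statement I would apply the corollary immediately preceding (again with $\carda=\aleph_0$ and the same $D$), which yields a weak$^*$-closed band in $FBL(\aleph_0)^*$ that is isometrically order isomorphic to $X^*$.

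I do not expect any real obstacle here: the entire content is the cardinality bookkeeping of the previous paragraph. The only point deserving a moment's care is ruling out a finite dense subset, so as to guarantee that the relevant index set has cardinality precisely $\aleph_0$ and that the two cited results apply verbatim; everything else is a direct citation.
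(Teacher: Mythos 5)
Your proposal is correct and follows exactly the route the paper intends: the corollary is stated without proof precisely because it is the instantiation of Proposition \ref{freequotients} and the preceding corollary at $\carda=\aleph_0$, once separability yields a countable dense subset of the unit ball. Your extra care in ruling out a finite dense subset (so the cardinality is exactly $\aleph_0$) is a small point of rigor the paper leaves implicit, but it is the same argument.
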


This illustrates quite effectively what a rich structure free Banach lattices and their duals have. For example if $X$ and $Y$ are separable Banach lattices such that no two non-zero bands in $X^*$ and $Y^*$ are isometrically isomorphic then the isometrically order isomorphic bands in $FBL(\aleph_0)^*$ must be disjoint in the lattice theoretical sense. So, for example, we have:

\begin{corollary}In $FBL(\aleph_0)^*$ there are mutually disjoint weak$^*$-closed bands $A$ and $B_p$ ($p\in (1,\infty]$) with $B_p$ isometrically order isomorphic to $L_p([0,1])$ and $A$ to $\ell_\infty$.
\end{corollary}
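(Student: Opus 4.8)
The plan is to obtain each of the required bands as the image, under Corollary~\ref{sepfreequotients}, of the dual of a suitable \emph{separable} Banach lattice, and then to deduce their mutual disjointness from the band-disjointness principle recorded just above the statement. For existence: for $p\in(1,\infty)$ let $q=p/(p-1)$ be the conjugate index, so that $L_p([0,1])$ is isometrically order isomorphic to $L_q([0,1])^*$; since $L_q([0,1])$ is a separable Banach lattice, Corollary~\ref{sepfreequotients} furnishes a weak$^*$-closed band $B_p\subseteq FBL(\aleph_0)^*$ that is isometrically order isomorphic to $L_p([0,1])$. For $p=\infty$ one uses instead the identification $L_\infty([0,1])=L_1([0,1])^*$, with $L_1([0,1])$ separable, to produce $B_\infty\cong L_\infty([0,1])$; and for $A$ one uses $\ell_\infty=\ell_1^*$, with $\ell_1$ separable, to produce $A\cong\ell_\infty$. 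The whole content of the corollary now lies in the mutual disjointness of this family.

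For the disjointness I would argue exactly as in the remark preceding the statement. Suppose $B_1,B_2$ are two of these bands, isometrically order isomorphic to $X^*$ and $Y^*$ respectively, and suppose they are not disjoint. Since $x\wedge y\in B_1\cap B_2$ whenever $0\le x\in B_1$ and $0\le y\in B_2$, non-disjointness forces $B_1\cap B_2\ne\{0\}$; this intersection is an order-closed ideal, hence a non-zero band, and being regularly embedded it is a band of $B_1$ and of $B_2$ simultaneously. Transporting it through the two isometries, we would obtain a non-zero band of $X^*$ that is isometrically order isomorphic to a non-zero band of $Y^*$. Thus it suffices to check, for each pair among the spaces $L_p([0,1])$ $(p\in(1,\infty])$ and $\ell_\infty$, that no non-zero band of the one is isometrically order isomorphic to a non-zero band of the other; as pairwise disjoint bands are automatically mutually disjoint, this will finish the proof.

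These band-rigidity checks use two invariants of an isometric order isomorphism. First, such a map carries a pair of disjoint normalised elements to a pair of disjoint normalised elements while preserving norms. Every non-zero band of $L_p([0,1])$ $(1\le p\le\infty)$ is an $L_p$-space on a set of positive measure, hence atomless and containing disjoint normalised $x,y$; for finite $p$ one has $\|x+y\|=2^{1/p}$, whereas in the AM-space $L_\infty([0,1])$ one has $\|x+y\|=1$. As the values $\{2^{1/p}:p\in(1,\infty)\}\cup\{1\}$ are pairwise distinct, this separates $B_p$ from $B_{p'}$ for all distinct $p,p'\in(1,\infty]$. Secondly, an isometric order isomorphism sends atoms to atoms; every non-zero band of $\ell_\infty$ is of the form $\ell_\infty(S)$ and so contains an atom, while every non-zero band of $L_p([0,1])$ $(p\in(1,\infty])$ is atomless, which separates $A$ from each $B_p$. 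The one pair that resists the metric invariant is $A$ versus $B_\infty$, since $\ell_\infty$ and $L_\infty([0,1])$ are both AM-spaces and yield the same disjoint-sum norm; this is the point where the order-theoretic atomic/atomless dichotomy is essential, and it is the only genuine subtlety in the argument.
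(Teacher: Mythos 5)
Your proposal is correct and follows essentially the same route as the paper: the bands are produced by applying Corollary \ref{sepfreequotients} to the separable predual lattices $L_q([0,1])$, $L_1([0,1])$ and $\ell_1$, and disjointness follows from the principle stated immediately before the corollary, that bands in $FBL(\aleph_0)^*$ isometrically order isomorphic to duals with no pair of isometrically isomorphic non-zero bands must be lattice-disjoint. The only difference is that you spell out the band-rigidity verifications (the $2^{1/p}$ norm invariant for disjoint normalised elements and the atomic/atomless dichotomy for $\ell_\infty$ versus $L_\infty$) which the paper leaves implicit; these checks are correct.
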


This gives continuum many disjoint non-zero elements in $FBL(\aleph_0)^*$, which should be contrasted with Corollary \ref{ctbledisjt}.

\section{The Structure of Finitely Generated Free Banach Lattices.}

We will see shortly that $FBL(n)$ is not an
AM-space unless $n=1$, but it does have a lot of AM-structure provided that
$n$ is finite.

If we have only a finite number of generators, $n$, say then we may
identify $FBL(n)$ with $H(\Delta_n)$, where $\Delta_n$ is now a
product of $n$ copies of $[-1,1]$. In this setting, it might be more
useful to consider the restriction of these homogeneous functions to
the union of all the proper faces of $\Delta_n$, which we will
denote by $F_n$. An alternative description of this set is that it is
the points in $\Real^n$ with supremum norm equal to 1. Each of the
generators $\delta_k$ ($1\le k\le n$) takes the value +1 on one
maximal proper face of $F_n$ of dimension $n-1$ and the value $-1$
on the complementary face. These faces exhaust the maximal proper
faces of $\Delta_n$. The restriction map from $H(\Delta_n)$ to
$C(F_n)$ is a surjective vector lattice isomorphism and an isometry from the
supremum norm over $\Delta_n$ to the supremum norm over $F_n$. We
know also that these norms are equivalent to the free norm. Thus
when we identify $FBL(n)$ with $C(F_n)$, even though the norms are
not the same, the closed ideals, band, quotients etc remain the same
so that we can read many structural results off from those for
$C(K)$ spaces. Whenever we refer to the free norm on $C(F_n)$ we
refer to the free norm generated using the generators which take
value $\pm1$ on the maximal proper faces.

In particular, we may identify the dual of $FBL(n)$ with the space
of regular Borel measures on $F_n$, $\M(F_n)$. We will see in Theorem \ref{onegen} that unless $n=1$ the
the dual of the free norm, $\|\cdot\|^\dag$ is definitely not the
usual norm, $\|\cdot\|_1$, under which $\M(F_n)$ is an AL-space.
However, there remains a lot of AL-structure in this dual.

\begin{proposition}\label{L1idealdual}If $\mu\in\M(F_n)$ is supported by a maximal
proper face of $\Delta_n$ then $\|\mu\|^\dag=\|\mu\|_1$.
\begin{proof}Suppose first that $\mu\ge 0$.
Let the free generators be denoted by $\delta_1,
\delta_2,\dots,\delta_n$. If $G$ is the maximal proper face in
question, we may suppose that $G\subset|\delta_1|^{-1}(1)$. As
$|\delta_k|\le \1$ on $F_n$, for $1\le k\le n$, we have
\[\left|\int \delta_k\;d\mu\right|\le \int |\delta_k|\;d\mu\le \int
\1\;d\mu=\|\mu\|_1,\] and on taking the maximum we have
$\|\mu\|^\dag\le \|\mu\|_1$. On the other have, $|\delta_1|\equiv 1$
on $G$ so that
\[\|\mu\|^\dag\ge \int|\delta_1|\;d\mu=\|\mu\|_1,\]
so we have equality. Both $\|\cdot\|_1$ and $\|\cdot\|^\dag$ are
lattice norms, so in the general case we have
\[\|\mu\|^\dag=\big\||\mu|\big\|^\dag=\big\||\mu|\big\|_1=\|\mu\|_1\]
and the proof is complete.
\end{proof}
\end{proposition}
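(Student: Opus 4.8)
The plan is to reduce first to the positive case and then to read off both inequalities from the geometry of the faces of $\Delta_n$. Recall from Proposition \ref{whatisdual} that, after identifying $FBL(n)^*$ with $\M(F_n)$, the dual free norm is $\|\mu\|^\dag=\max_{1\le k\le n}|\mu|(|\delta_k|)=\max_k\int|\delta_k|\,d|\mu|$, whereas $\|\mu\|_1=|\mu|(F_n)=\int\1\,d|\mu|$. Since both $\|\cdot\|^\dag$ and $\|\cdot\|_1$ are lattice norms, we have $\|\mu\|^\dag=\big\||\mu|\big\|^\dag$ and $\|\mu\|_1=\big\||\mu|\big\|_1$, and $|\mu|$ is supported on the same maximal proper face as $\mu$. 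Hence it is enough to prove $\|\mu\|^\dag=\|\mu\|_1$ for $\mu\ge0$, and then transfer back to arbitrary $\mu$ through $|\mu|$.

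For the upper bound I would use that every $\xi\in F_n$ has supremum norm equal to $1$, so that $|\delta_k(\xi)|=|\xi(k)|\le1$ for each coordinate $k$; that is, $|\delta_k|\le\1$ pointwise on $F_n$. Integrating against the positive measure $\mu$ gives $\int|\delta_k|\,d\mu\le\int\1\,d\mu=\|\mu\|_1$ for every $k$, and taking the maximum over $k$ yields $\|\mu\|^\dag\le\|\mu\|_1$. Note that this direction needs no support hypothesis at all: it holds for every positive measure on $F_n$.

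The reverse inequality is where the support hypothesis enters. I would invoke the explicit description of the maximal proper faces recorded above: each generator $\delta_k$ takes the value $+1$ on one maximal proper $(n-1)$-face of $\Delta_n$ and $-1$ on the complementary face, and these exhaust the maximal proper faces. Thus if $\mu$ is supported on a maximal proper face $G$, then after relabelling we may assume $G\subseteq|\delta_1|^{-1}(1)$, so $|\delta_1|\equiv1$ on $G$. Since $\mu$ lives on $G$ this gives $\|\mu\|^\dag\ge\int|\delta_1|\,d\mu=\int_G\1\,d\mu=\|\mu\|_1$, and combining with the upper bound yields equality for $\mu\ge0$; feeding $|\mu|$ through the lattice-norm identities of the first paragraph finishes the general case. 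There is no genuine obstacle here, but the one point worth stating carefully is this geometric identification: a maximal proper face of $[-1,1]^n$ is precisely a set on which some single coordinate is pinned to $\pm1$, which is exactly where the corresponding $|\delta_k|$ becomes identically $1$. That observation is what upgrades the trivial estimate $\|\mu\|^\dag\le\|\mu\|_1$ to an equality.
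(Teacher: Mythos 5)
Your proof is correct and follows essentially the same route as the paper's: the upper bound from $|\delta_k|\le\1$ on $F_n$, the lower bound from $|\delta_1|\equiv 1$ on the supporting face, and the reduction to positive measures via the lattice-norm property. The only (harmless) difference is ordering — you reduce to $\mu\ge 0$ first, noting explicitly that $|\mu|$ is supported on the same face, whereas the paper does the positive case first and reduces at the end.
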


\begin{corollary}\label{AMsubset}If $f\in C(F_n)$ and there is a maximal proper face
$G$ such that $f$ vanishes off $G$ then $\|f\|_F=\|f\|_\infty$.
\begin{proof}
If $\mu\in\M(F_n)$ then we may write $\mu=\mu_G+\mu_{F_n\setminus
G}$, where $\mu_A(X)=\mu(A\cap X)$, and note that $\int
f\;d\mu=\int f\;d\mu_G$. If $\|\mu\|^\dag\le 1$ then
$\|\mu_G\|^\dag=\|\mu_G\|_1\le 1$ as $|\mu_G|\le |\mu|$. Thus
\begin{align*}\|f\|_F&=\sup\{\int |f|\;d|\mu|:\|\mu\|^\dag\le 1\}\\
&\le \sup\{\int |f|\;d|\mu|:\|\mu\|_1\le 1\}=\|f\|_\infty
\end{align*}
\end{proof}
\end{corollary}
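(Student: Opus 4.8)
The inequality $\|f\|_\infty\le\|f\|_F$ holds for every $f$ (it is part of the norm comparison recorded in Proposition \ref{BProp01}), so the whole content of the corollary is the reverse estimate $\|f\|_F\le\|f\|_\infty$. The plan is to compute $\|f\|_F$ through the dual description of the free norm and then to exploit Proposition \ref{L1idealdual}, which says that on measures carried by a single maximal proper face the opaque dual norm $\|\cdot\|^\dag$ agrees with the honest $L_1$-norm. Recall from Proposition \ref{whatisdual} that $FBL(n)^*$ is $(\M(F_n),\|\cdot\|^\dag)$, so that
\[\|f\|_F=\sup\left\{\int|f|\,d|\mu|:\mu\in\M(F_n),\ \|\mu\|^\dag\le 1\right\}.\]

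The key observation is that, since $f$ (hence $|f|$) vanishes off the maximal proper face $G$, only the part of any measure living on $G$ can contribute to these integrals, and on that part $\|\cdot\|^\dag$ collapses to $\|\cdot\|_1$. Concretely, for an arbitrary $\mu$ with $\|\mu\|^\dag\le 1$ I would split $\mu=\mu_G+\mu_{F_n\setminus G}$ via the restriction notation $\mu_A(X)=\mu(A\cap X)$; here $G$ is closed and hence Borel, so $\mu_G$ is again a regular Borel measure, and since $|f|$ vanishes off $G$ we have $\int|f|\,d|\mu|=\int|f|\,d|\mu_G|$. Thus the supremum is unchanged if we restrict attention to measures concentrated on $G$.

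Now I would bring in the two facts that finish the argument. First, $\|\cdot\|^\dag$ is a lattice norm and $|\mu_G|\le|\mu|$, so $\|\mu_G\|^\dag\le\|\mu\|^\dag\le 1$. Second, $\mu_G$ is supported by the single maximal proper face $G$, so Proposition \ref{L1idealdual} yields $\|\mu_G\|_1=\|\mu_G\|^\dag\le 1$. Combining these,
\[\int|f|\,d|\mu|=\int|f|\,d|\mu_G|\le\|f\|_\infty\,\|\mu_G\|_1\le\|f\|_\infty,\]
and taking the supremum over all admissible $\mu$ gives $\|f\|_F\le\|f\|_\infty$, as required.

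The step needing the most care is the reduction itself, rather than the final one-line estimate. One must be sure that the free norm really is realized as a supremum of integrals against the positive measures in the unit ball of $(\M(F_n),\|\cdot\|^\dag)$, that the restriction $\mu_G$ is a legitimate element of $\M(F_n)$ with $\|\mu_G\|^\dag\le\|\mu\|^\dag$, and above all that Proposition \ref{L1idealdual} is genuinely applicable to $\mu_G$ because its support lies entirely in the one face $G$. Once this reduction is in place the conclusion is immediate, so I expect no serious obstacle beyond verifying these routine measure-theoretic points.
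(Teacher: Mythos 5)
Your proposal is correct and follows essentially the same route as the paper: the same decomposition $\mu=\mu_G+\mu_{F_n\setminus G}$, the same use of the lattice-norm inequality $\|\mu_G\|^\dag\le\|\mu\|^\dag$, and the same appeal to Proposition \ref{L1idealdual} to convert $\|\cdot\|^\dag$ into $\|\cdot\|_1$ on measures carried by $G$. Your explicit mention of the reverse inequality $\|f\|_\infty\le\|f\|_F$ (which the paper leaves implicit) is a minor and welcome addition, but the substance is identical.
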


This means that certain closed ideals in $FBL(n)$ are actually AM-spaces,
namely those that may be identified with functions on $F_n$ which
vanish on a closed set $A$ whose complement is contained in a single
proper face of $F_n$. Rather more interesting is an analogous result
for quotients.

In general, if $J$ is a closed ideal in a Banach lattice $X$ then
$(X/J)^*$ may be identified, both in terms of order and norm, with
the ideal $J^\circ=\{f\in X^*:f_{|J}\equiv 0\}$. We know that if $A$ is a closed subset of a compact
Hausdorff space $K$ and $J^A$ denotes the closed ideal $J^A=\{f\in
C(K):f_{|A}\equiv 0\}$ then when $C(K)$ is given the supremum norm
the normed quotient $C(K)/J^A$ is isometrically order isomorphic to
$C(A)$ under its supremum norm and its dual is isometrically order
isomorphic to the space of measures on $K$ which are supported by
$A$. In the particular case that $K=F_n$ we may still identify
quotients algebraically in the same way, but the description of the
quotient norm has to be modified slightly. That means that the
quotient norm may be described in a similar manner to our original
description of the free norm:

\begin{proposition}
If $A$ is a closed subset of $F_n$ and $C(F_n)$ is normed by its
canonical free norm then $C(F_n)/J^A$ is isometrically order
isomorphic to $C(A)$, where $C(A)$ is normed by
\[\|f\|_A=\sup\{\int |f|d|\mu_A|:\|\mu\|^\dag\le 1\}.\]
In this supremum we may restrict to measures $\mu$ supported by $A$.
\end{proposition}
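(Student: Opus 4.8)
The plan is to combine the (already-granted) algebraic identification of the quotient with the general duality between a quotient and an annihilator, and then to translate everything into the language of measures on $F_n$. First I would recall that the restriction map $f\mapsto f_{|A}$ induces a vector lattice isomorphism of $C(F_n)/J^A$ onto $C(A)$, surjectivity coming from the Tietze extension theorem and the kernel being exactly $J^A$; this is the purely algebraic and order-theoretic part, and it was noted in the discussion preceding the statement that it persists verbatim from the classical $C(K)$ setting. The content to be proved is therefore entirely about the norm: one must show that under this identification the quotient of the free norm is precisely $\|\cdot\|_A$.

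The key tool is the principle recalled just above, that $(C(F_n)/J^A)^*$ is isometrically order isomorphic to the annihilator $(J^A)^\circ$ equipped with the restriction of $\|\cdot\|^\dag$, and that in the $C(F_n)$ picture $(J^A)^\circ$ is exactly the band of measures $\mu\in\M(F_n)$ supported by $A$. Now for any $h=f+J^A$ the quotient norm equals $\big\||h|\big\|$, and since $|h|\ge 0$ the Banach lattice duality yields
\[\|h\|_{\mathrm{quot}}=\sup\{\psi(|h|):\psi\in (J^A)^\circ_+,\ \|\psi\|^\dag\le 1\},\]
the supremum running only over positive functionals because for a positive element one may always replace $\psi$ by $|\psi|$ without decreasing the value and without increasing the norm. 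Translating, $\psi$ is a positive measure $\nu$ supported by $A$, and evaluating on any representative $g$ with $g_{|A}=|f|$ gives $\psi(|h|)=\int_A|f|\,d\nu$ since $\nu$ lives on $A$. This already exhibits the quotient norm as the supremum restricted to measures supported by $A$, which is the final assertion of the proposition.

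It remains to check that enlarging the supremum to all $\mu\in\M(F_n)$, with the integrand now $\int|f|\,d|\mu_A|$, does not change the value; this is the one genuinely computational step. For the inequality in which one passes from a general $\mu$ (with $\|\mu\|^\dag\le1$) to a supported measure, take $\nu=|\mu|_A=|\mu_A|$: it is positive, supported by $A$, satisfies $\int_A|f|\,d\nu=\int|f|\,d|\mu_A|$, and, since $0\le\nu\le|\mu|$ and $\|\cdot\|^\dag$ is a lattice (hence monotone) norm, obeys $\|\nu\|^\dag\le\big\||\mu|\big\|^\dag=\|\mu\|^\dag\le1$; thus $\nu$ is admissible in the restricted supremum. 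For the reverse inequality one simply notes that a positive measure supported by $A$ is itself an admissible $\mu$ with $|\mu_A|=\nu$, so the restricted supremum is dominated by the unrestricted one. Hence $\|f\|_A$ coincides with the quotient norm, and the two forms of the supremum agree.

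I expect the only real obstacle to be bookkeeping rather than anything conceptual: one must be careful that $|f+J^A|$ is represented by $|f|$ in the lattice $C(A)$, that the order isomorphism $(C(F_n)/J^A)^*\cong(J^A)^\circ$ carries positive functionals to positive measures supported by $A$, and that total-variation restriction commutes with everything as claimed (the identity $|\mu_A|=|\mu|_A$). The monotonicity of $\|\cdot\|^\dag$ on the positive cone, which drives the passage from a general $\mu$ to the supported measure $|\mu|_A$, is immediate from its definition as $\sup_a|\phi|(|\delta_a|)$.
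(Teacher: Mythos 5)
Your proof is correct and follows exactly the route the paper intends: the paper states this proposition without a separate proof, relying on the duality $(X/J)^*\cong J^\circ$ and the classical identification of $(J^A)^\circ$ with measures supported by $A$, both recalled in the paragraph immediately preceding the statement. Your write-up simply fills in the details of that argument (positive functionals sufficing for positive elements, the identity $|\mu_A|=|\mu|_A$, and monotonicity of $\|\cdot\|^\dag$ to pass from arbitrary $\mu$ to the supported measure $|\mu|_A$), all of which are sound.
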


In particular we have, using Proposition \ref{L1idealdual}:

\begin{corollary}\label{nicequotients}If $A$ is a closed subset of a proper face of $F_n$
and $C(F_n)$ is normed by its canonical free norm then $C(F_n)/J^A$
is isometrically order isomorphic to $C(A)$ under its supremum norm.
\end{corollary}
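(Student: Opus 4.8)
The plan is to read this off from the preceding Proposition together with Proposition \ref{L1idealdual}, so the work is almost entirely bookkeeping. By the preceding Proposition, $C(F_n)/J^A$ is already identified, isometrically and in order, with $C(A)$ carrying the norm
\[\|f\|_A=\sup\left\{\int|f|\,d|\mu_A|:\|\mu\|^\dag\le1\right\},\]
and in that supremum we may restrict to measures $\mu$ supported by $A$, for which $\mu_A=\mu$. The only thing left to prove, therefore, is that under the extra hypothesis that $A$ sits inside a proper face the norm $\|\cdot\|_A$ coincides with the ordinary supremum norm $\|\cdot\|_\infty$ on $C(A)$.

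The key observation I would record is that every proper face of $\Delta_n$ is contained in one of the maximal proper faces: a proper face is obtained by fixing at least one coordinate to $\pm1$, and fixing just a single such coordinate produces a maximal proper face containing it. Hence if $A$ lies in some proper face it lies in some maximal proper face $G$, and then every measure $\mu$ supported by $A$ is \emph{a fortiori} supported by $G$. Proposition \ref{L1idealdual} now applies to each such $\mu$ and gives $\|\mu\|^\dag=\|\mu\|_1$. Substituting this into the formula above collapses the constraint, so that
\[\|f\|_A=\sup\left\{\int|f|\,d|\mu|:\mu\text{ supported by }A,\ \|\mu\|_1\le1\right\}.\]
To finish I would invoke the standard fact that, for the compact set $A$, this last supremum equals $\|f\|_\infty$: the inequality $\int|f|\,d|\mu|\le\|f\|_\infty\|\mu\|_1\le\|f\|_\infty$ is immediate, and equality is attained (indeed realised by a point mass) since $A$ is compact and $f$ continuous.

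I do not expect any genuine obstacle, as the statement is a direct specialisation. The single point that must not be glossed over is that Proposition \ref{L1idealdual} is phrased only for \emph{maximal} proper faces, so the reduction of an arbitrary proper face to a maximal one containing it is precisely the step that licenses the application of that result; everything else is formal.
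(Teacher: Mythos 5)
Your proposal is correct and is exactly the argument the paper intends: the paper gives no written-out proof, stating only that the corollary follows from the preceding Proposition together with Proposition \ref{L1idealdual}, and your bookkeeping (restricting the supremum to measures supported by $A$, passing from the proper face to a maximal proper face containing it so that $\|\mu\|^\dag=\|\mu\|_1$ applies, and then identifying the resulting supremum with $\|f\|_\infty$ via point masses) fills in precisely those steps. In particular, your explicit note that a proper face is contained in a maximal proper face is the right point to make, since Proposition \ref{L1idealdual} is stated only for maximal faces.
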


The free vector lattices over a finite number of generators exhibit
a lot of symmetry. For example it is not difficult to see that
$FVL(n)$ is invariant under rotations. In studying symmetry of
$FBL(n)$ it makes things clearer to identify $FBL(n)$ with the space
$C(S^{n-1})$ rather than $C(F_n)$, where $S^{n-1}$ is the Euclidean
unit sphere in $\Real^n$, even though the description of the free
norm is made slightly more difficult. In the case $n=2$, we are
looking at continuous functions on the unit circle and the dual free
norm is given by
\[\|\mu\|^\dag=\int_{S^1}|\sin(t)|\;d|\mu|(t)\vee
\int_{S^1}|\cos(t)|\;d|\mu|(t).\] In particular, if $\eta_x$
denotes the unit measure concentrated at $x$ then
\[\|\eta_x\|^\dag=|\sin(x)|\vee|\cos(x)|\]
which is certainly not rotation invariant. Note also that
\[\|\eta_x+\eta_{x+\pi/2}\|^\dag=\big(|\sin(x)|+|\sin(x+\pi/2)|\big)\vee\big(|\cos(x)|+|\cos(x+\pi/2)|\big).\]
In fact only rotations through multiples of $\pi/2$ are isometries
on $C(S^1)$ for the free norm. Of course, all rotations of $FBL(n)$
will be isomorphisms.

There is an obvious procedure for obtaining a rotation invariant
norm from the free norm, namely to take the average, with respect to
Haar measure on the group of rotations, of the free norms of
rotations of a given element. Although this will certainly not be
the free norm, given that it is derived in a canonical manner from
the free norm we might expect that either it is a familiar norm or
else is of some independent interest. It turns out not to be
familiar. This is again easiest to see in the dual.

If we denote this symmetric free norm by $\|\cdot\|_S$ and its dual
norm by $\|\cdot\|^S$ then we have
\[\|\eta_x\|^S=\|\eta_{x_\pi/2}\|^S=\frac1{2\pi}\int_0^{2\pi}|\sin(t)|\vee|\cos(t)|\;dt=\frac{2\sqrt{2}}{\pi}\]
and
\begin{align*}
\|\eta_x&+\eta_{x_\pi/2}\|^S\\
&=\frac1{2\pi}\int_0^{2\pi}
\big(|\sin(x)|+|\sin(x+\pi/2)|\big)\vee\big(|\cos(x)|+|\cos(x_\pi/2)|\big)\;dt\\
&=\frac{4}{\pi}\end{align*} so that the symmetric free norm is not
an AL-norm, which is the natural symmetric norm on $C(S^1)^*$, nor
an AM-norm. In fact $\|\eta_x+\eta_{x+t}\|^S$ can take any value
between $\frac{4}{\pi}$ and $\frac{4\sqrt{2}}{\pi}$ so the symmetric
free norm cannot be any $L^p$ norm either, implausible though that
would be anyway.

\section{Characterizing the Number of  Generators.}

Apart from wanting to understand how the number of generators affects the Banach lattice structure of $FBL(A)$, we would like to
know when $FBL(A)$ is a classical Banach lattice or has various properties generally considered desirable. The answer to this is ``not very often''!
It turns out that such properties can be used to characterize the number of generators, at least in a rather coarse manner.

In fact several properties that are normally considered ``good'' are only possessed by a free Banach lattice if it has only one generator. We gather several of these into our first result. We know that in the finitely generated case, $FBL(n)$ has a lot of AL-structure as well as some AM-structure. There is another
area of Banach lattice theory where the two structures occur mixed together, namely in injective Banach lattices in the category of Banach lattices and contractive positive operators, see \cite{Ha}. As injective Banach lattices
are certainly Dedekind complete we cannot have $FBL(n)$ being injective if $n>1$. It might be thought possible that $FBL(A)^*$ was injective, but that also
turns out to be false unless $|A|=1$.

\begin{theorem}\label{onegen}If $A$ is a non-empty set then the following are equivalent:
\begin{enumerate}
\item $|A|=1$.
\item $FBL(A)$ is isometrically an AM-space.
\item $FBL(A)$ is isomorphic to an AL-space.
\item Every bounded linear functional on $FBL(A)$ is order continuous.
\item There is a non-zero order continuous linear functional on $FBL(A)$.
\item $FBL(A)^*$ is an injective Banach lattice.
\end{enumerate}
\begin{proof}
If $A$ is a singleton then $\Delta_A=[-1,1]$ and $FBL(A)$ may be identified with $H(\Delta_A)$ which in turn may be identified with $\Real^2$.
The generator is the pair
$g=(-1,1)$. The positive linear functionals $\phi$ such that
$\phi(|g|)\le 1$ are those described by pairs of reals
$(\phi_1,\phi_2)$ with $|\phi_1|+|\phi_2|\le 1$. The free norm that
they induce on $\Real^2$ is precisely the supremum norm.

If $|A|>1$ then by Corollary \ref{ellinftyindual} $FBL(A)^*$ contains an order isometric copy of $\ell_\infty(A)$ so is not an AL-space and therefore $FBL(A)$
is not an AM-space. This establishes that $(i)\Leftrightarrow(ii)$.

It is clear that $(i)\Rightarrow(iii)$ although even in this case it is clear that $FBL(1)$ is not isometrically an
AL-space. $FBL(2)$, on the
other hand is isomorphic to continuous functions on a square so is
certainly not isomorphic to an AL-space. In view of Proposition
\ref{sublattice} and the fact that every closed sublattice of an
AL-space is itself an AL-space we see that $(iii)\Rightarrow(i)$.

It is clear that $(i)\Rightarrow(iv)\Rightarrow(v)$. To show that $(v)\Rightarrow(i)$, suppose that $|A|>1$ and that $\phi$ is a non-zero order continuous linear functional on $FBL(A)$. By continuity of $\phi$ and density of $FVL(A)$ in $FBL(A)$, $\phi_{|FVL(A)}\ne 0$. Similarly, as $FVL(A)=\bigcup\{FVL(F):F\subseteq A, |F|<\infty\}$ we may choose a finite subset $F\subseteq A$ with $\phi_{|FVL(F)}\ne 0$ so certainly $\phi_{|FBL(F)}\ne 0$. Without loss of generality, as long as $|A|>1$ we may assume that $|F|>1$. As $FBL(F)$ is regularly embedded in $FBL(A)$, by Proposition \ref{orderclosedsublattice}, $\phi_{|FBL(F)}$ is order continuous. As a vector lattice, we may identify $FBL(F)$ with $C(S_F)$, where $S_F$ is the 
$\ell_infty$ unit sphere in $\Delta_F$. Certainly $S_F$ is a dense in itself, metrizable (and hence separable) compact Hausdorff space so it follows from Proposition 19.9.4 of \cite{Se} that $\phi_{|FBL(F)}=0$, contradicting our original claim.

Certainly $FBL(1)^*$, being an AL-space, is injective, \cite{Lo} Proposition 3.2. We know from Corollary \ref{dualembedding} that if $|A|>1$ then $FBL(2)^*$ is isometrically order isomorphic to a (projection) band in $FBL(A)^*$. If $FBL(A)^*$ were injective then certainly $FBL(2)^*$ would also be injective. Recall that Proposition 3G of \cite{Ha} tells us that an injective Banach lattice either contains
a sublattice isometric to $\ell_\infty$, or else is  isometrically isomorphic to a finite AM-direct sum of AL-spaces. We know that $FBL(2)$ is order and norm isomorphic to continuous functions on the square $F_2$ so that $FBL(2)^*$ is norm and order isomorphic to the space of measures on $F_2$ so certainly has an order continuous norm. Thus it does not contain even an isomorphic copy of $\ell_\infty$ by Corollary 2.4.3 of \cite{MN}, so it certainly suffices to show that $FBL(2)^*$ cannot be decomposed into
a non-trivial finite AM-direct sum of bands of any nature.

The dual of $FBL(2)$ can be identified, as a vector lattice, with the regular Borel measures on $F_2$.
The dual free norm amounts to $\|\mu\|=\max\{\int |\delta_1|\;d|\mu|, \int |\delta_2|\;d|\mu|\}$, where $\delta_i$ is the projection onto the $i$'th coordinate.
It is clear that
$\int |\delta_1|\;d|\mu|=0$ if and only if $\mu$ is supported by $S_1=\{\langle 0,-1\rangle, \langle 0,1\rangle\}$ whilst
$\int |\delta_2|\;d|\mu|=0$ if and only if $\mu$ is supported by $S_2=\{\langle -1,0\rangle, \langle 1,0\rangle\}$.
If any non-trivial AM-decomposition of $FBL(2)^*$ were possible, into $J\oplus K$ (say), then we can pick $0\ne \mu\in J_+$ and $0\ne \nu\in K_+$. We
may assume that $\|\mu\|=\|\nu\|=1$ and therefore $\|\mu+\nu\|=1$. The fact that $\|\mu\|=\|\nu\|=1$ means that
\[ \int |\delta_1|\;d\mu\vee \int |\delta_2|\;d\mu=\int |\delta_1|\;d\nu\vee \int |\delta_2|\;d\nu=1.\]

Suppose that $\int |\delta_1|\;d\mu=\int |\delta_1|\;d\nu=1$, then we have $1=\|\mu+\nu\|\ge \int |\delta_1|\;d(\mu+\nu)= \int |\delta_1|\;d\mu+\int |\delta_1|\;d\nu=2$,
which is impossible. Similarly, we cannot have $\int |\delta_2|\;d\mu=\int |\delta_2|\;d\nu=1$. If $\int |\delta_1|\;d\mu=\int |\delta_2|\;d\nu=1$ then the fact that
$1=\|\mu+\nu\|\le \int |\delta_1|\;d(\mu+\nu)$ tells us that $\int |\delta_1|\;d\nu=0$ so that $\nu$ is supported by $S_1$. Similarly we see that $\int |\delta_2|\;d\mu=0$
so that $\mu$ is supported by $S_2$. This implies that $FBL(2)^*$ is supported by $S_1\cup S_2$ which is impossible. A similar contradiction arises if
$\int |\delta_2|\;d\mu=\int |\delta_1|\;d\nu=1$.

\end{proof}
\end{theorem}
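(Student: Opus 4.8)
The plan is to prove every equivalence by routing through condition (i): I would show that $|A|=1$ forces all the other properties via a direct computation in the base case, and that for the converses $|A|>1$ always produces a two-generator obstruction. The key structural facts I would lean on are that $FBL(2)$ embeds in $FBL(A)$ as a closed, regularly embedded sublattice (Proposition \ref{sublattice}, Proposition \ref{orderclosedsublattice}) and that $FBL(2)^*$ embeds as a weak$^*$-closed band in $FBL(A)^*$ (Corollary \ref{dualembedding}), together with the explicit identification of $FBL(2)$ with $C(F_2)$ and of its dual with $\M(F_2)$. The base case is concrete: when $|A|=1$ one has $\Delta_A=[-1,1]$ and $FBL(A)=H(\Delta_A)\cong\Real^2$ under the supremum norm, with dual the two-dimensional $\ell_1$-space. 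From this, (i)$\Rightarrow$(ii) is immediate, (i)$\Rightarrow$(iii) follows since on $\Real^2$ the sup norm is isomorphic to the AL-norm, (i)$\Rightarrow$(iv)$\Rightarrow$(v) holds because every functional on a finite-dimensional lattice is order continuous, and (i)$\Rightarrow$(vi) follows since the two-dimensional AL-space is injective by \cite{Lo}.

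For the reverse directions I would handle the "classical space" conditions first. To get $(ii)\Rightarrow(i)$, I would observe that an isometric AM-space has AL-space dual, whereas Corollary \ref{ellinftyindual} embeds $\ell_\infty(A)$ isometrically as a band in $FBL(A)^*$; for $|A|\ge2$ this is not an AL-space, and a band of an AL-space must be one, a contradiction. For $(iii)\Rightarrow(i)$, I would use that a closed sublattice of a space isomorphic to an AL-space is again isomorphic to an AL-space, then apply this to the sublattice $FBL(2)\cong C(F_2)$, which (being continuous functions on an infinite compact space, hence containing $c_0$) is not isomorphic to any AL-space.

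Next I would treat the order-continuity conditions. The implication $(iv)\Rightarrow(v)$ is trivial, so it remains to prove $(v)\Rightarrow(i)$. Assuming $|A|>1$ and a nonzero order continuous $\phi$, I would use the density of $FVL(A)$ and the fact that $FVL(A)=\bigcup\{FVL(F):F\subseteq A \text{ finite}\}$ to locate a finite $F$ with $|F|>1$ on which $\phi$ is nonzero; because $FBL(F)$ is regularly embedded (Proposition \ref{orderclosedsublattice}), the restriction $\phi_{|FBL(F)}$ stays order continuous and nonzero. Identifying $FBL(F)$ with $C(S_F)$ for a dense-in-itself compact metric space $S_F$, I would invoke the absence of nonzero order continuous functionals on such a $C(K)$ (Proposition 19.9.4 of \cite{Se}) to reach a contradiction.

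The implication I expect to be the main obstacle is $(vi)\Rightarrow(i)$. The reduction is clean: $FBL(2)^*$ is a projection band in $FBL(A)^*$ (Corollary \ref{dualembedding}), hence injective whenever $FBL(A)^*$ is, so I would apply Haydon's structure theorem (\cite{Ha}, Proposition 3G), which says an injective Banach lattice either contains a sublattice isometric to $\ell_\infty$ or is a finite AM-direct sum of AL-spaces. Identifying $FBL(2)^*$ with $\M(F_2)$ under the dual free norm $\|\mu\|=\int|\delta_1|\,d|\mu|\vee\int|\delta_2|\,d|\mu|$, I would note that this space has order continuous norm and so cannot contain even an isomorphic copy of $\ell_\infty$ (\cite{MN}, Corollary 2.4.3). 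The genuinely delicate part, and where the real work lies, is then excluding a nontrivial finite AM-direct-sum decomposition $J\oplus K$: taking normalized positive $\mu\in J$ and $\nu\in K$ with $\|\mu+\nu\|=1$, I would run a case analysis on the four quantities $\int|\delta_i|\,d\mu$ and $\int|\delta_i|\,d\nu$, using that the maximum-form norm together with $\|\mu+\nu\|=1$ forces, in each case, one of the measures to be supported on the two-point sets $S_1$ or $S_2$; this is incompatible with $FBL(2)^*$ being all of $\M(F_2)$, giving the final contradiction. Carefully enumerating which value-patterns can occur and closing each one is the crux of the argument.
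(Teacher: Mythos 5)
Your proposal is correct and follows essentially the same route as the paper's own proof: every equivalence is channelled through condition (i), using the identical key ingredients (Corollary \ref{ellinftyindual} for (ii), the sublattice $FBL(2)\cong C(F_2)$ with Proposition \ref{sublattice} for (iii), regular embedding plus Semadeni's Proposition 19.9.4 for (v), and Corollary \ref{dualembedding} with Haydon's structure theorem and the $S_1$/$S_2$ support case analysis for (vi)). The only differences are cosmetic, e.g.\ you supply an explicit $c_0$-based reason why $C(F_2)$ is not isomorphic to an AL-space where the paper merely asserts it.
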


It is already clear that free Banach lattices on more than one generator are not going to be
amongst the classical Banach lattices. Isomorphism with AM-spaces is
still possible and turns out to determine whether or not the number
of generators is finite.

\begin{theorem}\label{finitegens}If $A$ is any non-empty set then the following are
equivalent:
\begin{enumerate}
\item $A$ is finite.
\item $FBL(A)$ is isomorphic to $H(\Delta_A)$ under the supremum norm.
\item $FBL(A)$ has a strong order unit.
\item $FBL(A)$ is isomorphic to an AM-space.
\item $FBL(A)^*$ has an order continuous norm.
\end{enumerate}
\begin{proof}We have already seen that
(i)$\Rightarrow$(ii)$\Rightarrow$(iii). It is well known and simple to prove that (iii)$\Rightarrow$(iv). That
(iv)$\Rightarrow$(v) is
because the dual of an AM-space is an AL-space which has an order
continuous norm and the fact that order continuity of the norm is
preserved under (not necessarily isometric) isomorphisms. In order
to complete the proof we need only prove that (v)$\Rightarrow$(i).

If $\carda$ is infinite then $FBL(A)^*$ contains a weak$^*$-closed band that is isometrically order isomorphic to $\ell_\infty$, by Corollary \ref{ellinftyindual}. By Theorem 2.4.14 of \cite{MN} this is equivalent to
$FBL(\carda)^*$ \emph{not} having an order continuous norm (and to
many other conditions as well.)
\end{proof}
\end{theorem}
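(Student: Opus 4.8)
The plan is to prove the five conditions equivalent by closing the cycle (i)$\Rightarrow$(ii)$\Rightarrow$(iii)$\Rightarrow$(iv)$\Rightarrow$(v)$\Rightarrow$(i). Four of the five links are formal and draw only on material already assembled; the single substantive implication is the contrapositive of (v)$\Rightarrow$(i), where the free-lattice-specific input enters.

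For the forward arc I would proceed as follows. The step (i)$\Rightarrow$(ii) is immediate from Proposition \ref{BProp01}, which asserts that for finite $A$ the canonical map $J:FBL(A)\to H(\Delta_A)$ is a surjective norm and lattice isomorphism. For (ii)$\Rightarrow$(iii) I would exhibit a strong order unit directly in $H(\Delta_A)$: the function $e(\xi)=\|\xi\|_\infty$ is continuous and homogeneous, hence lies in $H(\Delta_A)$, and homogeneity gives $|f|\le \|f\|_\infty\,e$ for every $f\in H(\Delta_A)$; since an order isomorphism carries strong order units to strong order units, hypothesis (ii) transports this unit to $FBL(A)$ (alternatively one may simply invoke Proposition \ref{finiteou} once (i) is granted). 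The link (iii)$\Rightarrow$(iv) is the standard fact that on a Banach lattice with strong order unit $e$ the order-unit norm $\|x\|_e=\inf\{\lambda>0:|x|\le\lambda e\}$ is an AM-norm equivalent to the original complete norm, the equivalence following from norm-boundedness and closedness of order intervals together with the open mapping theorem. Finally (iv)$\Rightarrow$(v) combines three soft observations: the dual of an AM-space is an AL-space, every AL-space has order continuous norm, and order continuity of the norm is invariant under lattice isomorphism.

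The heart of the matter is (v)$\Rightarrow$(i), which I would establish by contraposition. Assume $A$ is infinite. By Corollary \ref{ellinftyindual} the dual $FBL(A)^*$ then contains a weak$^*$-closed band isometrically order isomorphic to $\ell_\infty(\carda)$. But $\ell_\infty(\carda)$ has no order continuous norm: already the sequence $\1-\sum_{k\le n}e_k$ decreases to $0$ while its norms stay equal to $1$, witnessing failure of ($\sigma$-)order continuity. The presence of such a sublattice, equivalently such a band, rules out order continuity of the norm on $FBL(A)^*$, by the standard characterization recorded as Theorem 2.4.14 of \cite{MN}. Thus (v) fails whenever $A$ is infinite, which closes the cycle.

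The main obstacle is exactly this last implication, and essentially all of its difficulty has already been absorbed into Corollary \ref{ellinftyindual}: the genuine content is that for infinite $A$ an isometric order copy of $\ell_\infty(\carda)$ really does sit as a band inside $FBL(A)^*$. Once that embedding is in hand, the failure of order continuity is forced by the internal structure of $\ell_\infty(\carda)$, and the remainder of the theorem is the routine assembly of the four soft links around this single nontrivial fact.
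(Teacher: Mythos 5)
Your cycle, and in particular your handling of the only substantive implication (v)$\Rightarrow$(i), coincides exactly with the paper's proof: contraposition through Corollary \ref{ellinftyindual} followed by Theorem 2.4.14 of \cite{MN}, with (i)$\Rightarrow$(ii) via Proposition \ref{BProp01} and (iii)$\Rightarrow$(iv)$\Rightarrow$(v) disposed of by the same standard facts the paper cites. The genuine problem is your proof of (ii)$\Rightarrow$(iii). The function $e(\xi)=\|\xi\|_\infty=\sup_{a\in A}|\xi(a)|$ is \emph{not} continuous on $\Delta_A=[-1,1]^A$ when $A$ is infinite: $e(0)=0$, yet every basic product-topology neighbourhood of $0$ constrains only finitely many coordinates and therefore contains points $\xi$ with $e(\xi)=1$ (take $\xi=\chi_{\{a\}}$ for a coordinate $a$ not among them). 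So $e\notin H(\Delta_A)$ for infinite $A$, and the strong order unit you exhibit does not lie in the space. Worse, the step cannot be repaired by a cleverer choice of unit: for infinite $A$ the lattice $H(\Delta_A)$ has \emph{no} strong order unit at all. Indeed, if $u$ were one, then writing $e_a=\chi_{\{a\}}\in\Delta_A$ we would have $u(e_a)>0$ for every $a$ (since $|\delta_a|\le\lambda_a u$ evaluated at $e_a$ gives $1\le\lambda_a u(e_a)$), while continuity of $u$ at $0$ (where homogeneity forces $u(0)=0$) gives, for each $\epsilon>0$, a finite $F\subset A$ with $u(e_a)<\epsilon$ for all $a\notin F$. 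For uncountable $A$ this yields some $a$ with $u(e_a)=0$, a contradiction; for $A=\Natural$ it yields $u(e_n)\to 0$, and then $f=\sup_n\sqrt{u(e_n)}\,|\delta_n|$ belongs to $H(\Delta_\Natural)$ (it is the uniform limit of the finite suprema, as the coefficients tend to $0$) but admits no bound $f\le\lambda u$, since evaluation at $e_n$ would give $\sqrt{u(e_n)}\le\lambda u(e_n)$, i.e. $1\le\lambda\sqrt{u(e_n)}\to 0$.

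This matters logically because in a proof by cycle each arrow must be established unconditionally, and your fallback --- invoking Proposition \ref{finiteou} ``once (i) is granted'' --- is circular: that only proves (i)$\Rightarrow$(iii), leaving (ii) with no outgoing implication, so the equivalence of (ii) with the remaining conditions would not follow. The repair is cheap and stays close to both your argument and the paper's: replace the arrow (ii)$\Rightarrow$(iii) by (ii)$\Rightarrow$(iv), which holds for \emph{every} $A$ because $H(\Delta_A)$ under the supremum norm is a closed sublattice of $C(\Delta_A)$ and hence an AM-space. The implications (i)$\Rightarrow$(ii), (i)$\Rightarrow$(iii) (Proposition \ref{finiteou} together with the identification of $FBL(n)$ with $C(F_n)$), (ii)$\Rightarrow$(iv), (iii)$\Rightarrow$(iv), (iv)$\Rightarrow$(v) and (v)$\Rightarrow$(i) then form a strongly connected diagram, and all five statements are equivalent. (The paper itself is terse here, asserting (i)$\Rightarrow$(ii)$\Rightarrow$(iii) as already seen; what has in fact been seen earlier is (i)$\Rightarrow$(ii) and, for finite $A$, the existence of a strong order unit. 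It is your attempt to prove (ii)$\Rightarrow$(iii) as an unconditional implication that runs into the obstruction above.)
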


In a similar vein, we can characterize, amongst free Banach
lattices, those with a countable number of generators. Before doing so, though, we note that once there are infinitely many generators then there is an immediate connection between the number of generators and the cardinality of dense subsets. Perhaps not entirely unexpectedly, given Corollary \ref{ctbledisjt}, the same result holds for order intervals. Recall that the \emph{density character} of a topological space is the least cardinal of a dense subset,

\begin{theorem}\label{densitycharacter}If $\carda$ is an infinite cardinal then the following conditions on a set $A$ are equivalent:
\begin{enumerate}
\item $\card(A)=\carda$.
\item $FBL(A)$ has density character $\carda$.
\item The smallest cardinal $\cardb$ such that every order interval in $FBL(A)$ has density character at most $\cardb$ is $\carda$.
\end{enumerate}
\begin{proof}Let $\carda=\card(A)$, $\cardb$ be the density character of $FBL(A)$ and $\cardc$ the smallest cardinal which is at least as large as the density character of every order interval in $FBL(A)$. We need to show that $\carda=\cardb=\cardc$.

The free vector lattice over $\Rational$ with $\carda$ many generators has cardinality precisely $\carda$, given that $\carda$ is infinite. That is dense in $FVL(A)$ and hence in $FBL(A)$ for the free norm, so $\cardb\le \carda$. Clearly $\cardc\le \cardb$.  let $K$ be a compact Hausdorff space such that the smallest cardinality of a dense subset of $C(K)$, and hence of the unit ball in $C(K)$, is $\carda$. For example we could take $K=[0,1]^{\carda}$. There is a bounded lattice homomorphism $T:FBL(A)\to C(K)$ which maps the generators of $A$ onto a dense subset of the unit ball of $C(K)$. The proof of Proposition \ref{freequotients} shows that $T$ is onto. Let $\1_K$ denote the constantly one function on $K$. The order interval $[-T^{-1}\1_K,T^{-1}\1_K]$ has a dense subset of cardinality at most $\cardc$. As $T$ is a surjective lattice homomorphism, $T([-T^{-1}\1_K,T^{-1}\1_K])=[-\1_K,\1_K]$, and this will have a dense subset of cardinality at most $\cardc$. Hence $\carda\le \cardc$. This establishes that $\carda=\cardb=\cardc$.
\end{proof}
\end{theorem}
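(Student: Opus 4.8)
The plan is to name the three cardinals $\carda=\card(A)$, $\cardb$ the density character of $FBL(A)$, and $\cardc$ the least cardinal dominating the density character of every order interval of $FBL(A)$, and then to close the loop of inequalities $\carda\ge\cardb\ge\cardc\ge\carda$, whence all three coincide and the three conditions are equivalent.

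I would dispatch the two easy inequalities first. For $\cardb\le\carda$ I would exhibit an explicit dense set of cardinality $\carda$, namely the vector sublattice generated over $\Rational$ by $\{\delta_a:a\in A\}$, i.e.\ the set of all elements obtainable from the generators by finitely many suprema, infima, additions and multiplications by \emph{rational} scalars. Since $\carda$ is infinite this set has cardinality $\carda$, and since the lattice and linear operations are norm continuous it is dense in $FVL(A)$, hence in $FBL(A)$; thus $\cardb\le\carda$. For $\cardc\le\cardb$ I would merely note that an order interval is a subset of the metric space $FBL(A)$, and in a metric space every subspace has density character at most that of the ambient space.

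The substance is the remaining inequality $\carda\le\cardc$, and this is the step I expect to be the main obstacle, since it is where the purely cardinal data of $A$ must be forced out of the \emph{order} structure. I would fix a compact Hausdorff space $K$ whose weight, equivalently the density character of $C(K)$ and of its closed unit ball, is exactly $\carda$; concretely $K=[0,1]^{\carda}$ works, as its weight is $\carda$. Choosing a dense subset of the unit ball of $C(K)$ indexed as $\{g_a:a\in A\}$ (permitting repetitions, using $\card A=\carda$), the universal property of $FBL(A)$ yields a contractive lattice homomorphism $T:FBL(A)\to C(K)$ with $T\delta_a=g_a$, and the argument of Proposition \ref{freequotients} shows that $T$ carries the open unit ball of $FBL(A)$ onto that of $C(K)$, so $T$ is surjective.

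Finally I would transfer density along $T$. I would pick $u\in FBL(A)_+$ with $Tu=\1_K$, taking $u=|v|$ for any preimage $v$ of $\1_K$ so that $Tu=|Tv|=\1_K$. Because a surjective lattice homomorphism sends order intervals onto order intervals, $T\big([-u,u]\big)=[-\1_K,\1_K]$, which is precisely the closed unit ball of $C(K)$. By the definition of $\cardc$ the interval $[-u,u]$ has a dense subset $D$ with $\card D\le\cardc$; since $T$ is norm continuous and maps $[-u,u]$ onto the unit ball of $C(K)$, the image $T(D)$ is dense there with $\card T(D)\le\cardc$. Hence the density character $\carda$ of the unit ball of $C(K)$ satisfies $\carda\le\cardc$, and combining the three inequalities gives $\carda=\cardb=\cardc$.
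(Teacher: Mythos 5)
Your proposal is correct and follows essentially the same route as the paper: the same cycle of inequalities, the same rational sublattice argument for $\cardb\le\carda$, and the same choice of $K=[0,1]^{\carda}$ with a surjective lattice homomorphism $T:FBL(A)\to C(K)$ pushing a dense subset of an order interval onto a dense subset of the unit ball of $C(K)$. Your version merely makes explicit two details the paper leaves implicit — taking $u=|v|$ as a positive preimage of $\1_K$, and the density-transfer argument along the continuous surjection of intervals.
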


For the
statement of the next result which characterizes a free Banach lattice having countably many generators, we need to recall some definitions. A
\emph{topological order unit} $e$ of a Banach lattice $E$ is an
element of the positive cone such that the closed order ideal
generated by $e$ is the whole of $E$. These are also referred to as
\emph{quasi-interior points.} Separable Banach lattices always
possess topological order units. The \emph{centre} of $E$, $Z(E)$,
is the space of all linear operators on $E$ lying between two real
multiples of the identity. The centre is termed \emph{topologically
full} if whenever $x,y\in E$ with $0\le x\le y$ here is a sequence
$(T_n)$ in $Z(E)$ with $T_ny\to x$ in norm. If $E$ has a topological
order unit then its centre is topologically full. At the other
extreme there are AM-spaces in which the centre is \emph{trivial},
i.e. it consists only of multiples of the identity.

\begin{theorem}
If $A$ is a non-empty set, then the following are equivalent:
\begin{enumerate}
\item $A$ is finite or countably infinite.
\item $FBL(A)$ is separable.
\item Every order interval in $FBL(A)$ is separable.
\item $FBL(A)$ has a topological order unit.
\item $Z\big(FBL(A)\big)$ is topologically full.
\item $Z\big(FBL(A)\big)$ is non-trivial.
\end{enumerate}
\begin{proof}
If $A$ is finite then it follows from the isomorphism seen in Theorem \ref{finitegens}
that $FBL(A)$, and hence its order intervals, is separable. Combining this observation with the preceding theorem shows that
 (i), (ii) and (iii) are equivalent.

We noted earlier that separable Banach lattices always have a
topological order unit. The fact that Banach lattices with a
topological order unit have a topologically full centre is also
widely known, but finding a complete proof in the literature is not
easy. The earliest is in Example 1 of \cite{Or}, but that proof is
more complicated than it need be. A simpler version is in
Proposition 1.1 of \cite{Wi2} and see also Lemma 1 of \cite{dPa}.

Even if $\carda=1$, $FBL(\carda)$ is not one-dimensional so that if
the centre is topologically full then it cannot be trivial.

We know from Proposition \ref{fnrepn} that we may identify
$FBL(A)$ with a sublattice of $H(\Delta_A)$. It is clear, as it contains the coordinate
projections, that it separates points of $\Delta_A$. If $|A|$
is uncountable then $\{0\}$ is not a
$G_\delta$ subset of $\Delta_A$. It follows from Theorem 3.1 of
\cite{Wi} that the centre of this sublattice, and therefore of
$FBL(\carda)$, is then trivial.
\end{proof}
\end{theorem}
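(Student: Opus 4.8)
The plan is to close the equivalences as a cycle, grouping the three ``size'' conditions (i)--(iii) and then threading the ``unit and centre'' conditions (iv)--(vi) back to (i). I would first settle (i)$\Leftrightarrow$(ii)$\Leftrightarrow$(iii) by combining the finite case with the density-character result. If $A$ is finite, Theorem \ref{finitegens} identifies $FBL(A)$, up to isomorphism, with $H(\Delta_A)$ and hence with a space of continuous functions $C(F_n)$ over a compact metrizable $F_n$; such a space is separable and has separable order intervals, giving (ii) and (iii). If instead $A$ is infinite, Theorem \ref{densitycharacter} says that the density character of $FBL(A)$ equals $\card(A)$ and that $\card(A)$ is also the least bound on the density characters of its order intervals; hence $FBL(A)$ is separable, or has all order intervals separable, precisely when $\card(A)=\aleph_0$. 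This pins down (i)$\Leftrightarrow$(ii)$\Leftrightarrow$(iii).

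Next I would run (ii)$\Rightarrow$(iv)$\Rightarrow$(v)$\Rightarrow$(vi). The first step is the standard fact, recalled just before the theorem, that a separable Banach lattice has a quasi-interior point, so (ii)$\Rightarrow$(iv). For (iv)$\Rightarrow$(v) I would quote the cited result that a topological order unit forces a topologically full centre. For (v)$\Rightarrow$(vi) I would note that $FBL(A)$ is never one-dimensional: its positive cone contains a comparable pair $0\le x\le y$ with $x\notin\Real y$, so a trivial centre $Z\big(FBL(A)\big)=\Real\, I$ could only produce approximants $T_n y=\lambda_n y$ lying on the line $\Real y$ and could never reach such an $x$, contradicting topological fullness.

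The substantive step, and the main obstacle, is (vi)$\Rightarrow$(i), which I would prove contrapositively: assuming $\card(A)$ uncountable, I would show $Z\big(FBL(A)\big)$ is trivial. Using Corollary \ref{fnrepn} I view $FBL(A)$ as a vector sublattice of $H(\Delta_A)\subset C(\Delta_A)$ that separates the points of $\Delta_A$, the coordinate maps $\delta_a$ doing the separating. For such function lattices the centre is realized by multiplication operators, and the existence of genuinely non-constant multipliers is controlled by the topology of the representing space at its distinguished points. The key topological input is that for uncountable $A$ the singleton $\{0\}$ is not a $G_\delta$ subset of the product $\Delta_A=[-1,1]^A$, since any $G_\delta$ set depends on only countably many coordinates whereas $\{0\}$ constrains all of them. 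Feeding this into Theorem 3.1 of \cite{Wi}, which ties triviality of the centre of a point-separating sublattice to exactly this $G_\delta$ behaviour, yields $Z\big(FBL(A)\big)=\Real\, I$ and hence the failure of (vi). The delicate part will be checking that the hypotheses of that external theorem genuinely apply to our concrete homogeneous sublattice---in particular correctly accounting for the special role of the vertex $0$ at which every homogeneous function vanishes---rather than any fresh computation.
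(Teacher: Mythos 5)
Your proposal is correct and follows essentially the same route as the paper: the finite case of Theorem \ref{finitegens} together with Theorem \ref{densitycharacter} for (i)$\Leftrightarrow$(ii)$\Leftrightarrow$(iii), the standard facts that separability yields a quasi-interior point and that a topological order unit yields a topologically full centre for (ii)$\Rightarrow$(iv)$\Rightarrow$(v), the observation that $FBL(A)$ is never one-dimensional for (v)$\Rightarrow$(vi), and the contrapositive of (vi)$\Rightarrow$(i) via the representation of $FBL(A)$ as a point-separating sublattice of $H(\Delta_A)$, the failure of $\{0\}$ to be a $G_\delta$ in $[-1,1]^A$ for uncountable $A$, and Theorem 3.1 of \cite{Wi}. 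Your added details (the explicit argument that a trivial, topologically full centre forces one-dimensionality, and the coordinate-counting reason why $\{0\}$ is not a $G_\delta$) only flesh out steps the paper states tersely.
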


\begin{corollary}
 If $A$ is an uncountable set then $FBL(A)$ has trivial centre.
\end{corollary}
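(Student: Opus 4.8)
The plan is to obtain this as an immediate consequence of the preceding theorem, reading the corollary off as the contrapositive of the equivalence of conditions (i) and (vi). First I would observe that if $A$ is uncountable then $A$ is neither finite nor countably infinite, so condition (i) of that theorem fails. By the equivalence $(\mathrm{i})\Leftrightarrow(\mathrm{vi})$ established there, condition (vi) must then fail as well; but the failure of (vi) says precisely that $Z\big(FBL(A)\big)$ is \emph{not} non-trivial, that is, the centre consists only of real multiples of the identity operator. This is exactly the assertion of the corollary.

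There is no genuine obstacle here, since all the substantive work has already been carried out in the proof of the theorem; the corollary merely isolates the uncountable case. For completeness I would recall where that content lies: using the identification of $FBL(A)$ with a point-separating vector sublattice of $H(\Delta_A)$ supplied by Corollary \ref{fnrepn}, the key point is that for uncountable $A$ the singleton $\{0\}$ fails to be a $G_\delta$ subset of $\Delta_A=[-1,1]^A$, whence Theorem 3.1 of \cite{Wi} forces the centre of such a sublattice, and therefore of $FBL(A)$ itself, to be trivial. Thus the only thing the corollary adds is the explicit specialisation of the theorem, and its proof amounts to nothing more than invoking that equivalence.
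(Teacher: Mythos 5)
Your proof is correct and is exactly the paper's intended argument: the corollary is stated there without proof precisely because it is the contrapositive of the equivalence $(\mathrm{i})\Leftrightarrow(\mathrm{vi})$ of the preceding theorem, whose proof for the uncountable case rests on the same two facts you cite ($\{0\}$ failing to be a $G_\delta$ in $\Delta_A$ and Theorem 3.1 of \cite{Wi}). Nothing further is needed.
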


Note that this would seem to be the first ``natural'' example of a
Banach lattice with a trivial centre. If $\carda>1$ then $FVL(A)$ always has trivial centre. The details are left to the interested reader.

\section{Lifting Disjoint Families in Quotient Banach Lattices.}

In \cite{We}, Weinberg asked what were the projective objects in the category of abelian $\ell$-groups, pointing out, for example, that
a summand of a free $\ell$-group was projective. Topping studied projective vector lattices in \cite{To} but the reader should be warned that Theorem 8, claiming that
countable positive disjoint families in quotients $L/J$ of vector lattices $L$ lift to positive disjoint families in $L$, is false. In fact that is only possible for
an Archimedean Riesz space if the space is a direct sum of
copies of the reals, see \cite{Co} and \cite{Mo}.

 Later in this paper we will study projective Banach
lattices, which are intimately connected with quotient spaces. We
will need to know when disjoint families in a quotient Banach lattice $X/J$
can be lifted to disjoint families in $X$. As this is a question of
considerable interest in its own right and also because the results
that we need do not seem to be in the literature already, we present
them in a separate section here.

It is well-known, although we know of no explicit reference, that
any finite disjoint family $(y_k)_{k=1}^n$ in a quotient Riesz space
$X/J$ can be lifted to a disjoint family $(x_k)_{k=1}^n$ in $X$ with
$Qx_k=y_k$, where $Q:X\to X/J$ is the quotient map. If we restrict
attention to norm closed ideals in Banach lattices then, unlike the vector lattice case, we can
handle countably infinite disjoint liftings, but not larger ones.
This does not contradict the vector lattice result cited above as
there are many non-closed ideals in a Banach lattice.

\begin{theorem}\label{countablelifting}
If $X$ is a Banach lattice, $J$ a closed ideal in $X$, $Q:X\to X/J$
the quotient map and $(y_k)_{k=1}^\infty$ is a disjoint sequence in
$X/J$ then there is a disjoint sequence $(x_k)$ in $X$ with
$Qx_k=y_k$ for all $k\in \Natural$.
\begin{proof}
It suffices to consider the case
that each $y_n\ge 0$ and $\|\sum_{k=1}^\infty \|y_k\|<\infty$. Define $z_n=\sum_{k=n}^\infty y_k\in X/J$ and note that $z_{n+1}$ is disjoint from 
$y_1,\dots, y_n$. The sequence $(x_n)$ will be constructed inductively. 

For $n=1$ we start by choosing $\tilde{x}_1, \tilde{u}_1\in X_+$ with $Q\tilde{x}_1=y_1$ and $Q\tilde{u}_1=z_2$. Now define 
$x_1=\tilde{x}_1-\tilde{x}_1\wedge \tilde{u}_1$ and $u_1=\tilde{u}_1-\tilde{x}_1\wedge \tilde{u}_1$. Then ${x_1,u_1}$ is a disjoint system and, as 
$Q(\tilde{x}_1\wedge \tilde{u}_1)=y_1\wedge z_2=0$, $Qx_1=y_1$ and $Qu_1=z_2$.

Now suppose that we have constructed a disjoint system $\{x_1, \dots, x_n, u_n\}$ with $Qx_j=y_j (1\le j\le n)$ and $Qu_n=z_{n+1}$. 
As $0\le y_{n+1}, z_{n+2}\le z_{n+1}$, There  are $\tilde{x}_{n+1}, \tilde{u}_{n+1}\in X_+$ with $\tilde{x}_{n+1},\tilde{u}_{n+1}\le u_n$, 
$Q\tilde{x}_{n+1}=y_{n+1}$ and $Q\tilde{u}_{n+1}=z_{n+2}$. Let $x_{n+1}=\tilde{x}_{n+1}-\tilde{x}_{n+1}\wedge \tilde{u}_{n+1}$ and 
$u_{n+1}=\tilde{u}_{n+1}-\tilde{x}_{n+1}\wedge \tilde{u}_{n+1}$, then $x_{n+1}\wedge u_{n+1}=0, Qx_{n+1}=y_{n+1}$ and $Qu_{n+1}=z_{n+2}$. Moreover, 
$0\le x_{n+1}, u_{n+1}\le u_n$ so that $\{x_1,\dots, x_n, x_{n+1}, u_{n+1}\}$ is again a disjoint system.
\end{proof}
\end{theorem}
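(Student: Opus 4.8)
The plan is to build the disjoint lift $(x_k)$ by an induction that carries along a single ``reservoir'' element at each stage, accounting for the entire unprocessed tail of the sequence. First I would pass to a convenient normal form. Multiplying each $y_k$ by a positive scalar preserves disjointness and preserves solvability of the lifting problem (a lift of $c_k y_k$ rescales to a lift of $y_k$), so I may assume $\sum_{k=1}^\infty \|y_k\| < \infty$. Moreover, decomposing each $y_k$ into $y_k^+$ and $y_k^-$ yields a disjoint sequence of positive elements; a disjoint positive lift $\{a_k, b_k\}$ of that sequence reassembles via $x_k = a_k - b_k$ into a disjoint lift of $(y_k)$, because $a_k \wedge b_k = 0$ forces $|x_k| = a_k + b_k$ with mutually disjoint supports. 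Hence it suffices to treat the case $y_k \ge 0$ for all $k$.

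The one fact I would isolate first is that the lattice-homomorphism quotient $Q$ is interval-preserving on the positive cone: if $0 \le v \le Qw$ with $w \in X_+$, then choosing any $\tilde v \in X$ with $Q\tilde v = v$ and setting $v' = \tilde v^+ \wedge w$ gives $0 \le v' \le w$ and $Qv' = v^+ \wedge Qw = v$. This is used repeatedly to lift elements while controlling their size.

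Now set $z_n = \sum_{k=n}^\infty y_k \in X/J$, which converges by summability; since the $y_k$ are mutually disjoint, $z_{n+1}$ is disjoint from each of $y_1, \ldots, y_n$. The inductive invariant is a finite disjoint system $\{x_1, \ldots, x_n, u_n\} \subset X_+$ with $Qx_j = y_j$ $(1 \le j \le n)$ and $Qu_n = z_{n+1}$, where $u_n$ is the reservoir. To advance, I apply the interval-preserving fact with $w = u_n$ to lift both $y_{n+1}$ and $z_{n+2}$ (each dominated by $z_{n+1} = Qu_n$) to elements $\tilde x_{n+1}, \tilde u_{n+1} \in [0, u_n]$, and then disjointify by subtracting $\tilde x_{n+1} \wedge \tilde u_{n+1}$ from each. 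Because $Q$ is a lattice homomorphism, $Q(\tilde x_{n+1} \wedge \tilde u_{n+1}) = y_{n+1} \wedge z_{n+2} = 0$, so the resulting $x_{n+1}$ and $u_{n+1}$ still project to $y_{n+1}$ and $z_{n+2}$; and since both lie below $u_n$, which was disjoint from $x_1, \ldots, x_n$, the enlarged system stays disjoint.

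The main obstacle is conceptual rather than computational: a naive induction that only disjointifies each freshly chosen lift against the accumulated $x_1, \ldots, x_n$ can consume or strand the available positive mass, leaving nothing from which to lift later terms disjointly. The tail-sum reservoir is exactly the device that rules this out, since $u_n$ projects onto the whole remaining tail $z_{n+1}$ and therefore contains enough mass to furnish both the next term $y_{n+1}$ and the successor reservoir $z_{n+2}$, all while remaining beneath the previous reservoir and hence disjoint from everything already constructed. Collecting the $x_k$ from the successive stages produces the required globally disjoint sequence.
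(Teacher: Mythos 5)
Your proposal is correct and follows essentially the same route as the paper's own proof: the same reduction to positive summable terms, the same tail-sum reservoir $z_n=\sum_{k=n}^\infty y_k$ with inductive invariant $\{x_1,\dots,x_n,u_n\}$, and the same disjointification by subtracting $\tilde{x}_{n+1}\wedge\tilde{u}_{n+1}$. If anything, you supply two details the paper leaves implicit, namely the justification of the reduction and the proof that lifts can be chosen inside the order interval $[0,u_n]$.
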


The same proof will show, as there is no need to worry about convergence, that:

\begin{proposition}\label{finitelifting}
For any vector lattice $X$ and ideal $J$ in $X$, if $Q:Xto X/J$ is the quotient map then for any disjoint family $(y_k)_{k=1}^n$ in $X/J$ there is a disjoint family
 $(x_k)_{k=1}^n$ in $X$ with $Qx_k=y_k$ for $1\le k\le n$.
\end{proposition}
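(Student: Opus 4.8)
The plan is to run the inductive construction from the proof of Theorem \ref{countablelifting} verbatim, except that I replace the infinite tail sums $z_n=\sum_{k=n}^\infty y_k$ by the finite tail sums $z_j=\sum_{k=j}^n y_k$ (with $z_{n+1}=0$). Since these are genuine finite sums there is no series to control, and the induction simply halts after $n$ steps; this is exactly the ``no need to worry about convergence'' remark in the paper made precise.

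First I would reduce to the case that every $y_k$ is positive. Given an arbitrary disjoint family $(y_k)_{k=1}^n$, the collection of positive and negative parts $\{y_1^+,y_1^-,\dots,y_n^+,y_n^-\}$ is a positive disjoint family of at most $2n$ elements, since $y_i^{\pm}\wedge y_j^{\pm}\le |y_i|\wedge|y_j|=0$ for $i\ne j$ and $y_i^+\wedge y_i^-=0$. If I can lift this enlarged family to a positive disjoint family $\{a_1,b_1,\dots,a_n,b_n\}$ in $X$ with $Qa_k=y_k^+$ and $Qb_k=y_k^-$, then $x_k=a_k-b_k$ satisfies $Qx_k=y_k$, while $|x_k|=a_k+b_k$ (as $a_k\wedge b_k=0$), and these remain pairwise disjoint because all the $a$'s and $b$'s are. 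Next, with the $y_k\ge 0$ pairwise disjoint, $z_{j+1}$ is disjoint from each of $y_1,\dots,y_j$ by the inequality $p\wedge(q+r)\le p\wedge q+p\wedge r$ for positive elements. I then construct, by induction on $j$, a disjoint system $\{x_1,\dots,x_j,u_j\}\subset X_+$ with $Qx_i=y_i$ for $i\le j$ and $Qu_j=z_{j+1}$: for $j=1$ I take positive lifts $\tilde x_1,\tilde u_1$ of $y_1,z_2$ and subtract $\tilde x_1\wedge\tilde u_1$, noting $Q(\tilde x_1\wedge\tilde u_1)=y_1\wedge z_2=0$; for the inductive step, since $0\le y_{j+1},z_{j+2}\le z_{j+1}=Qu_j$ and $Q$ is a surjective lattice homomorphism, $Q$ maps $[0,u_j]$ onto $[0,z_{j+1}]$, so I may choose $\tilde x_{j+1},\tilde u_{j+1}\in[0,u_j]$ lifting $y_{j+1},z_{j+2}$ and again disjointify by subtracting $\tilde x_{j+1}\wedge\tilde u_{j+1}$ (whose image $y_{j+1}\wedge z_{j+2}$ is $0$). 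Because the new elements lie below $u_j$, which is disjoint from $x_1,\dots,x_j$, they too are disjoint from $x_1,\dots,x_j$, so $\{x_1,\dots,x_{j+1},u_{j+1}\}$ is again disjoint. At $j=n$ one has $Qu_n=z_{n+1}=0$, and discarding $u_n$ leaves the desired lifting $(x_k)_{k=1}^n$.

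There is no real obstacle here once the infinite series is gone; the only step deserving care is the claim that a Riesz quotient map carries $[0,u_j]$ onto $[0,Qu_j]$, which I would justify by lifting any $w$ with $0\le w\le Qu_j$ to some positive $\tilde w$ and replacing it by $\tilde w\wedge u_j$, so that $Q(\tilde w\wedge u_j)=Qw\wedge Qu_j=Qw$. Everything else is the finite, convergence-free shadow of the argument already given for Theorem \ref{countablelifting}.
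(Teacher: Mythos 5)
Your proof is correct and is essentially the paper's own argument: the paper proves this proposition precisely by remarking that the inductive construction of Theorem \ref{countablelifting} goes through verbatim once the infinite tail sums are replaced by finite ones, which is exactly what you do. Your explicit justifications of the two steps the paper leaves implicit --- the reduction to positive families via positive/negative parts, and the fact that $Q$ maps $[0,u_j]$ onto $[0,Qu_j]$ --- are both valid and welcome, but they do not constitute a different route.
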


Even in Banach lattices, Theorem \ref{countablelifting} is as far as we can go.

\begin{example}\label{nolifting}
Given any uncountable disjoint family in a Banach lattice $X$, we know from Proposition \ref{freequotients} that there is a free Banach lattice $FBL(\carda)$ and a closed ideal $J$ in $FBL(\carda)$ such that $X$ is isometrically order isomorphic to $FBL(\carda)/J$. As a disjoint family in a free Banach lattice has to be countable, Corollary \ref{ctbledisjt}, the disjoint family cannot possibly be lifted to $FBL(\carda)$.

A slightly more concrete example may be found using Problem 6S of \cite{GJ} where it is shown that
$\beta\Natural\setminus\Natural$ contains continuum many disjoint
non-empty open and closed subsets. I.e. $\ell_\infty/c_0$ contains
continuum many non-zero disjoint positive elements. As $\ell_\infty$
contains only countably many disjoint elements, we cannot possibly
lift each of this continuum of disjoint elements in
$\ell_\infty/c_0$ to disjoint elements in $\ell_\infty$. The same
will be true of any uncountable subset of these disjoint positive
elements of $\ell_\infty/c_0$, so this shows that lifting of disjoint positive
families of cardinality $\aleph_1$ is not possible.
\end{example}

An apparently simpler problem is to start with two subsets $A$ and
$B$ in $X/J$ with $A\perp B$ and seek subsets $A'$, $B'$ of $X$ with
$A'\perp B'$, $Q(A')=A$ and $Q(B')=B$. Again countability is vital
to the success of this attempt, in fact it allows us to do much
more.

\begin{proposition}If $X$ is a Banach lattice, $J$ a closed ideal in $X$, $Q:X\to X/J$
the quotient map and $(A_n)_{n=1}^\infty$ be a sequence of
\emph{countable} subsets of $X/J$ with $A_m\perp A_n$ if $m\ne n$
then there are subsets $(B_n)$ of $X$, with $B_m\perp B_n$ if $m\ne
n$ and $Q(B_n)=A_n$ for each $n\in\Natural$.
\begin{proof}
As above, there is no loss of generality in assuming that each
$A_n\subset (X/J)_+$. Enumerate each set as
$A_n=\{a^n_k:k\in\Natural\}$ (there is no difference, apart from
notation, if one or both set is finite). Let $v_n=\sum_{k=1}^\infty
a^n_k/(2^k\|a_k\|)$ so that $v_m\perp v_n$ if $m\ne n$ and $0\le
a^n_k\le 2^k \|a_k\| v_n$ for $k,n\in\Natural$. We know from Theorem
\ref{countablelifting} that there is a disjoint sequence $(u_n)$ in
$X_+$ with $Q(u_n)=v_n$. For any $a^n_k\in A_n$ we can find
$c^n_k\in X_+$ with $Q(c^n_k)=a^n_k$. Now set $b^n_k=c^n_k\wedge
(2^k \|a_k\| u_n)$ so that we still have
\[Q(b^n_k)=Q(c^n_k)\wedge\big(2^k \|a_k\| Q(u_n)\big)=a^n_k\wedge
(2^k \|a_k\| v_n)=a^n_k.\] Also each $b^n_k\in u_n^{\perp\perp}$ so
that if $m\ne n$ then for any choice of $j$ and $k$ we see that
$b^m_j\perp b^n_k$ as $u_m\perp u_n$. Now defining
$B_n=\{b^n_k:k\in\Natural\}$ gives the required sets.
\end{proof}
\end{proposition}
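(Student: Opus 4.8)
The plan is to reduce the problem to the single-sequence case already handled by Theorem \ref{countablelifting} and then use a disjointification trick to keep the different families $A_n$ separated. The key observation is that the families $A_m$ and $A_n$ for $m\ne n$ are mutually disjoint \emph{as sets}, so the task has two aspects: lifting each countable set $A_n$ to a set $B_n$, and ensuring the lifts of distinct families remain disjoint. The mechanism I would use is to construct a single disjoint sequence of ``dominating'' positive elements $u_n$ in $X$ whose images $v_n$ dominate the entire family $A_n$, and then build each lift $b^n_k$ inside the band generated by $u_n$.

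\begin{proof}
As above, there is no loss of generality in assuming that each
$A_n\subset (X/J)_+$. Enumerate each set as
$A_n=\{a^n_k:k\in\Natural\}$ (there is no difference, apart from
notation, if one or both set is finite). Let $v_n=\sum_{k=1}^\infty
a^n_k/(2^k\|a^n_k\|)$ so that $v_m\perp v_n$ if $m\ne n$ and $0\le
a^n_k\le 2^k \|a^n_k\| v_n$ for $k,n\in\Natural$. We know from Theorem
\ref{countablelifting} that there is a disjoint sequence $(u_n)$ in
$X_+$ with $Q(u_n)=v_n$. For any $a^n_k\in A_n$ we can find
$c^n_k\in X_+$ with $Q(c^n_k)=a^n_k$. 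Now set $b^n_k=c^n_k\wedge
(2^k \|a^n_k\| u_n)$ so that we still have
\[Q(b^n_k)=Q(c^n_k)\wedge\big(2^k \|a^n_k\| Q(u_n)\big)=a^n_k\wedge
(2^k \|a^n_k\| v_n)=a^n_k.\] Also each $b^n_k\in u_n^{\perp\perp}$ so
that if $m\ne n$ then for any choice of $j$ and $k$ we see that
$b^m_j\perp b^n_k$ as $u_m\perp u_n$. Now defining
$B_n=\{b^n_k:k\in\Natural\}$ gives the required sets.
\end{proof}

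**Remarks on the key steps and the main obstacle.** The crucial step is the passage from mutually disjoint \emph{families} to a single disjoint sequence $(v_n)$ of positive elements in $X/J$, each dominating its whole family; the countability of each $A_n$ is exactly what makes the dominating sum $v_n$ converge (after normalizing by $2^k\|a^n_k\|$), and the mutual disjointness of the $A_n$ transfers directly to disjointness of the $v_n$. Once this is in place, Theorem \ref{countablelifting} does the real lifting work, producing disjoint $u_n$ in $X$ lifting the $v_n$. The truncation $b^n_k=c^n_k\wedge(2^k\|a^n_k\|u_n)$ is the device that both preserves the image under $Q$ (because the factor $2^k\|a^n_k\|v_n$ dominates $a^n_k$, so the infimum is unchanged after applying $Q$) and forces $b^n_k$ into the band $u_n^{\perp\perp}$, which guarantees cross-family disjointness.

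**Where the difficulty lies.** The main obstacle is purely the bookkeeping needed to dominate every element of every family simultaneously by a disjoint sequence; the essential analytic input — countable disjoint lifting — is already available from Theorem \ref{countablelifting}, and the rest is a verification that $Q$ interacts correctly with the lattice operations (meets are preserved by the lattice homomorphism $Q$, and disjointness in a band is automatic). I would expect no genuine difficulty beyond confirming the normalization constants make $a^n_k\le 2^k\|a^n_k\|v_n$ hold and that the resulting meet survives the quotient map intact; these are routine once the dominating elements are correctly constructed.
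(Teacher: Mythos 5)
Your proof is correct and is essentially identical to the paper's own argument: the same dominating elements $v_n=\sum_k a^n_k/(2^k\|a^n_k\|)$, the same appeal to the countable disjoint lifting theorem to obtain the disjoint $u_n$, and the same truncation $b^n_k=c^n_k\wedge(2^k\|a^n_k\|u_n)$ placing each lift in the band $u_n^{\perp\perp}$. You have also (correctly) written $\|a^n_k\|$ where the paper's text has the typo $\|a_k\|$; otherwise there is nothing to distinguish the two proofs.
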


Considering the case of singleton sets, the example above shows that
we cannot allow an uncountable number of disjoint families. Nor can
we allow even one of the families to be uncountable.

In the case that $X=C(K)$, for $K$ a compact Hausdorff space, a
closed ideal $J$ is of the form $F=\{f\in C(K):f_{|A}\equiv 0\}$ for
some closed subset $A\subset K$ and the quotient $X/J$ may be
identified with $C(A)$ in the obvious manner. For two elements
$f,g\in C(K)$, $f\perp g$ if and only if the two sets
$f^{-1}(\Real\setminus\{0\})$ and $g^{-1}(\Real\setminus\{0\})$ are
disjoint.

\begin{example}\label{TPex}The \emph{Tychonoff plank} is the topological
space $[0,\omega]\times[0,\omega_1]\setminus\{(\omega,\omega_1)\}$
where $\omega$ is the first infinite ordinal and $\omega_1$ the
first uncountable ordinal. This is renowned as an example of a
non-normal Hausdorff space. The sets
$U=[0,\omega)\times\{\omega_1\}$ and $V=\{\omega\}\times
[0,\omega_1)$ are disjoint closed subsets which cannot be separated
by disjoint open sets. See for example \S8.20 of \cite{GJ}. If we
add back in the removed corner point, and define $A=U\cup
V\cup\{(\omega,\omega_1)\}$ then $U$ and $V$ become open subsets of
$A$. Any disjoint open subsets of the whole product space which
intersected $A$ in $U$ and $V$ respectively would, with the corner
point removed if necessary, separate the closed sets $U$ and $V$ in
the plank. This contradiction shows that the lifting is not
possible.

Each point of $U$ is isolated so their characteristic functions lie
in $C(A)$ giving a (countable) family $F$ with
$U=\bigcup\{f^{-1}(\Real\setminus\{0\}):f\in F\}$. Let $G$ be a
family of functions in $C(A)$ such that
$V=\bigcup\{g^{-1}(\Real\setminus\{0\}):g\in G\}$, which is
certainly possible using Urysohn's lemma.   If these could be lifted
to disjoint families $L$ and $M$ then
$\bigcup\{f^{-1}(\Real\setminus\{0\}):f\in L\}$ and
$\bigcup\{f^{-1}(\Real\setminus\{0\}):f\in M\}$ would be disjoint
open subsets of $K$ which intersected $A$ in disjoint open sets
which equalled, respectively $U$ and $V$, which we have seen is
impossible.
\end{example}

\section{Projective Banach Lattices.}

\begin{definition}
A Banach lattice $P$ is \emph{projective} if whenever $X$ is a
Banach lattice, $J$ a closed ideal in $X$ and $Q:X\to X/J$ the
quotient map then for every linear lattice homomorphism $T:P\to X/J$
and $\epsilon>0$ there is a linear lattice homomorphism
$\hat{T}:P\to X$ such that
\begin{enumerate}
\item $T=Q\circ \hat{T}$,
\item $\|\hat{T}\|\le (1+\epsilon)\|T\|$.
\end{enumerate}
\end{definition}

Even if we take $P=\Real$, which is easily seen to be projective
given this definition, it is clear that we cannot replace
$1+\epsilon$ by $1$ as the quotient norm is an infimum which need
not be attained. There are projective Banach lattices, because:

\begin{proposition}\label{freeisprojective}A free Banach lattice is projective.
\begin{proof}
Let $(\delta_a)_{a\in\carda}$ be the generators of the free Banach
lattice $F$. Suppose that $X$ is a Banach lattice, $J$ a closed
ideal in $X$, $Q;X\to X/J$ the quotient map, $T:F\to X/J$ a lattice
homomorphism and $\epsilon>0$. For each $\alpha\in\carda$, there is
$x_a\in X$ with $Qx_a=T\delta_a$ and $\|x_a\|\le
(1+\epsilon)\|T\delta_a\|\le (1+\epsilon)\|T\|$, using the
definition of the quotient norm. As $F$ is free there is a linear
lattice homomorphism $\hat{T}:F\to X$ with $\hat{T}\delta_a=x_a$ for
all $a\in \carda$ and $\|\hat{T}\|\le \sup\{\|x_a\|:a\in\carda\}\le
(1+\epsilon)\|T\|$. As $(Q\circ \hat{T})\delta_a=T\delta_a$ for all
$a\in\carda$ and both $Q\circ\hat{T}$ and $T$ are linear lattice
homomorphisms they must coincide on the vector lattice generated by
the $\delta_a$ and, by continuity, on $F$.
\end{proof}
\end{proposition}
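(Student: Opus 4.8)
The plan is to exploit the universal property of the free Banach lattice so as to reduce the lifting of the \emph{operator} $T$ to the much easier problem of lifting the \emph{individual generators} $T\delta_a$. Write $F=FBL(\carda)$ with free generators $\{\delta_a:a\in\carda\}$, each of norm one, and suppose we are given a Banach lattice $X$, a closed ideal $J$, the quotient map $Q:X\to X/J$, a linear lattice homomorphism $T:F\to X/J$ and $\epsilon>0$. Since $\|\delta_a\|_F=1$ we have $\|T\delta_a\|\le\|T\|$ for every $a\in\carda$.

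First I would lift the generators. Because the quotient norm on $X/J$ is the infimum $\|Qx\|=\inf\{\|x-j\|:j\in J\}$, for each $a\in\carda$ there is $x_a\in X$ with $Qx_a=T\delta_a$ and $\|x_a\|\le(1+\epsilon)\|T\delta_a\|\le(1+\epsilon)\|T\|$; when $T\delta_a=0$ one simply takes $x_a=0$, so no special case is needed. In particular $\{x_a:a\in\carda\}$ is a bounded family with $\sup_a\|x_a\|\le(1+\epsilon)\|T\|$.

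Next I would invoke freeness. By the defining universal property of $F$, the bounded assignment $a\mapsto x_a$ extends to a unique linear lattice homomorphism $\hat{T}:F\to X$ with $\hat{T}\delta_a=x_a$ and $\|\hat{T}\|=\sup_a\|x_a\|\le(1+\epsilon)\|T\|$, which is exactly condition (ii). It then remains to verify (i), namely $Q\circ\hat{T}=T$. Both $Q\circ\hat{T}$ and $T$ are linear lattice homomorphisms from $F$ into $X/J$, and they agree on the generators since $(Q\circ\hat{T})\delta_a=Qx_a=T\delta_a$; hence they agree on the vector sublattice generated by $\{\delta_a:a\in\carda\}$, and by continuity (that sublattice being dense in $F$) on the whole of $F$.

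The argument is essentially formal once the set-up is in place, so there is no serious obstacle; the one point requiring care --- and the sole reason the constant $1+\epsilon$ rather than $1$ appears --- is the simultaneous norm control of the lifts $x_a$. Because the quotient norm is an infimum that need not be attained, the value $\|T\delta_a\|$ may itself be unachievable by any genuine lift, so one must absorb the gap into the factor $1+\epsilon$. The universal property then does the rest, converting the uniform bound $\sup_a\|x_a\|\le(1+\epsilon)\|T\|$ directly into the desired operator-norm estimate on $\hat{T}$.
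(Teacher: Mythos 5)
Your proof is correct and follows essentially the same route as the paper: lift each generator through the quotient with norm control $(1+\epsilon)\|T\|$, use the universal property of the free Banach lattice to produce $\hat{T}$, and conclude $Q\circ\hat{T}=T$ by agreement on generators, density, and continuity. The only additions (the explicit zero case and the remark on why $1+\epsilon$ cannot be replaced by $1$) are harmless refinements of the paper's argument.
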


We can characterize projective Banach lattices in a reasonably
familiar manner.

\begin{theorem}\label{projs}
The following conditions on a Banach lattice $P$ are equivalent.
\begin{enumerate}
\item $P$ is projective.
\item For all $\epsilon>0$ there are:
\begin{enumerate}
\item a free Banach lattice $F$,
\item a closed sublattice $H$ of $F$ and a lattice isomorphism
$\I:H\to P$ with $\|\I\|,\|\I^{-1}\|\le 1+\epsilon$, and
\item a lattice homomorphism projection $R:F\to H$ with $\|R\|\le
1+\epsilon$.
\end{enumerate}
\item For all $\epsilon>0$ there are:
\begin{enumerate}
\item a projective Banach lattice $F$,
\item a closed sublattice $H$ of $F$ and a lattice isomorphism
$\I:H\to P$ with $\|\I\|,\|\I^{-1}\|\le 1+\epsilon$, and
\item a lattice homomorphism projection $R:F\to H$ with $\|R\|\le
1+\epsilon$.
\end{enumerate}
\end{enumerate}
\begin{proof}
To see that (i)$\Rightarrow$(ii), suppose that $P$ is projective, let
$F$ be a free Banach lattice and $J$ a closed ideal in $F$ such that
$P$ is isometrically order isomorphic to the quotient $F/J$
\emph{via} the linear lattice isomorphism $\I:P\to F/J$, which is
always possible using Proposition \ref{freequotients}. Let $Q:F\to
F/J$ be the quotient map. As $P$ is projective, for any $\epsilon>0$
there is a linear lattice homomorphism $\hat{\I}:P\to F$ with
$Q\circ \hat{\I}=\I$ and $\|\hat{\I}\|\le
(1+\epsilon)\|\I\|=1+\epsilon$. As $Q\circ \hat{\I}$ is injective,
$\hat{\I}$ is also injective and $\hat{\I}P$ is a closed sublattice
of $F$ as $\|\hat{\I}p\|\ge \|Q(\hat{\I}p\|=\|\I p\|=\|p\|$. The map
$\hat{\I}\circ \I^{-1}\circ Q$ is a lattice homomorphism which
projects $F$ onto $\hat{\I}(P)$ and $\|\hat{\I}\circ \I^{-1}\circ
Q\|\le \|\hat{\I}\|\le 1+\epsilon$, so (2)(b) holds. We know that
$\|\hat{\I}\|\le 1+\epsilon$ and $\hat{\I}^{-1}=\I^{-1}\circ Q$ so
that $\|\hat{\I}^{-1}\|=1$ and (2)(c) holds.

In view of Proposition \ref{freeisprojective}, clearly
(ii)$\Rightarrow$(iii).

 Suppose that (iii) holds, and in particular that (a), (b) and (c) hold for the
real number $\eta$. Suppose that $X$ is any Banach lattice, $J$ a
closed ideal in $X$, $Q:X\to X/J$ the quotient map, $\eta>0$ and
that $T:P\to X/J$ is a linear lattice homomorphism. The map
$T\circ\I\circ R:F\to X/J$ is also a linear lattice homomorphism
with $\|T\circ \I\circ R\|\le \|T\|\|\I\|\|R\|\le (1+\eta)^2 \|T\|$.
As $F$ is projective there is a linear lattice homomorphism $S:F\to
X$ with $Q\circ S=T\circ \I\circ R$ and $\|S\|\le (1+\eta)\|T\circ
\I\circ R\|\le (1+\eta)^3\|T\|$. Now let $\hat{T}=S\circ
\I^{-1}:P\to X$, which is also a linear lattice homomorphism, so
that $\|\hat{T}\|\le \|S\|\|\I^{-1}\|\le (1+\eta)^4\|T\|$ and
\[Q\circ\hat{T}=Q\circ(S\circ \I^{-1})=(Q\circ S)\circ
\I^{-1}=(T\circ \I\circ R)\circ \I^{-1}=T.\] By choosing $\eta$
small enough we can ensure that $(1+\eta)^4\le1+\epsilon$ and we
have shown that $P$ is projective.
\end{proof}
\end{theorem}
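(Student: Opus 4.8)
The plan is to establish the cycle $(i)\Rightarrow(ii)\Rightarrow(iii)\Rightarrow(i)$, which identifies the projective Banach lattices with the lattice-homomorphic retracts of free Banach lattices, up to an arbitrarily small distortion of the norm. This is the Banach-lattice form of the categorical slogan that projectives are exactly the retracts of free objects, and the only real labour lies in propagating the $1+\epsilon$ tolerances through the constructions.

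For $(i)\Rightarrow(ii)$ I would first use Proposition~\ref{freequotients} to realise $P$ isometrically as a quotient $F/J$ of a free Banach lattice $F$ by a closed ideal $J$, with quotient map $Q\colon F\to F/J$ and an isometric lattice isomorphism $\I\colon P\to F/J$. Applying the projectivity of $P$ to the homomorphism $\I$ (a map into the quotient $F/J$) produces a lattice homomorphism $\hat\I\colon P\to F$ with $Q\circ\hat\I=\I$ and $\|\hat\I\|\le(1+\epsilon)\|\I\|=1+\epsilon$. The decisive observation is the lower estimate $\|\hat\I p\|\ge\|Q\hat\I p\|=\|\I p\|=\|p\|$, which shows $\hat\I$ is bounded below and hence is an isomorphism onto a \emph{closed} sublattice $H=\hat\I(P)$, with $\|\hat\I^{-1}\|\le1$. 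Then $\hat\I^{-1}\colon H\to P$ serves as the isomorphism required in (ii)(b), while $R=\hat\I\circ\I^{-1}\circ Q\colon F\to H$ is a lattice homomorphism projection onto $H$ (it restricts to the identity there) with $\|R\|\le\|\hat\I\|\le1+\epsilon$, giving (ii)(c).

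The implication $(ii)\Rightarrow(iii)$ is immediate, since every free Banach lattice is projective by Proposition~\ref{freeisprojective}, so the data witnessing (ii) already witness (iii). For $(iii)\Rightarrow(i)$ I would turn the retract structure back into a lifting. Given a Banach lattice $X$, a closed ideal $J$, the quotient map $Q\colon X\to X/J$, a lattice homomorphism $T\colon P\to X/J$, and the data $F,H,\I,R$ provided by (iii) for a small parameter $\eta$, I would form the composite $T\circ\I\circ R\colon F\to X/J$, lift it through the projective $F$ to a lattice homomorphism $S\colon F\to X$ with $Q\circ S=T\circ\I\circ R$, and set $\hat T=S\circ\I^{-1}\colon P\to X$. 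Because $R$ restricts to the identity on $H$ and $\I\circ\I^{-1}$ is the identity on $P$, one checks $Q\circ\hat T=(T\circ\I\circ R)\circ\I^{-1}=T$, while the norms multiply to give $\|\hat T\|\le\|S\|\,\|\I^{-1}\|\le(1+\eta)^4\|T\|$.

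The main obstacle, such as it is, is exactly this norm bookkeeping: each composition and each lift contributes a further factor of $(1+\eta)$, so one must choose $\eta$ small enough that $(1+\eta)^4\le1+\epsilon$, and in $(i)\Rightarrow(ii)$ one must not overlook the lower bound on $\hat\I$ that guarantees its range is a \emph{closed} sublattice rather than merely a sublattice. The lattice-theoretic content—that projections, inclusions, and lifts compose to give lattice homomorphisms—is routine.
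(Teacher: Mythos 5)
Your proposal is correct and follows essentially the same argument as the paper's own proof: realising $P$ as an isometric quotient of a free Banach lattice via Proposition~\ref{freequotients}, lifting $\I$ by projectivity and using the lower bound $\|\hat\I p\|\ge\|p\|$ to get a closed sublattice with the projection $\hat\I\circ\I^{-1}\circ Q$, then reversing the construction for $(iii)\Rightarrow(i)$ with the same $(1+\eta)^4$ bookkeeping. No gaps; nothing further to add.
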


In particular, in light of Corollary \ref{sepfreequotients},  all the
separable projective Banach lattices that we produce later will
(almost) embed in $FBL(\aleph_0)$ reinforcing the richness of its
structure.

Combining Theorem \ref{projs} with Corollary \ref{freefinal} we have:

\begin{corollary}
The real-valued lattice homomorphisms on a projective Banach lattice separate points.
\end{corollary}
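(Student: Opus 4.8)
The plan is to exploit the characterization of projectivity in Theorem \ref{projs} to realize $P$ as a closed sublattice of a free Banach lattice, and then to transport to $P$ the separating family of real-valued lattice homomorphisms that every free Banach lattice carries by Corollary \ref{freefinal}. Since a lattice homomorphism is in particular linear, it suffices to show that for every non-zero $p\in P$ there is a real-valued lattice homomorphism $\omega$ on $P$ with $\omega(p)\neq 0$; applying this to $p=x-y$ then separates any two distinct points $x,y\in P$.

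First I would fix some $\epsilon>0$ (say $\epsilon=1$) and invoke the implication (i)$\Rightarrow$(ii) of Theorem \ref{projs}. This furnishes a free Banach lattice $F$, a closed sublattice $H$ of $F$, and a lattice isomorphism $\I:H\to P$. Note that the projection $R$ produced by the theorem plays no role here, and the norm estimates are irrelevant for this conclusion: all that is needed is that $P$ is lattice isomorphic, via $\I$, to the sublattice $H$ of the free lattice $F$. Now, given a non-zero $p\in P$, I would set $h=\I^{-1}p$; since $\I$ is a lattice isomorphism, $h$ is a non-zero element of $H$ and hence of $F$. Because $F$ is free, the equivalence of (i) and (ii) in Corollary \ref{freefinal} applies: as $h\neq 0$ there is a real-valued lattice homomorphism $\psi$ on $F$ with $\psi(h)\neq 0$. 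Restricting $\psi$ to the sublattice $H$ gives a real-valued lattice homomorphism on $H$, and composing with the lattice isomorphism $\I^{-1}$ yields $\omega=\psi|_H\circ\I^{-1}:P\to\Real$, a real-valued lattice homomorphism on $P$. Finally $\omega(p)=\psi|_H(\I^{-1}p)=\psi(h)\neq 0$, as required.

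There is no genuine obstacle once the two cited ingredients are in place, since the substance of the statement is front-loaded into Theorem \ref{projs} and Corollary \ref{freefinal}; the only point demanding a little care is the transport step. One must observe that the restriction of a lattice homomorphism $\psi$ on $F$ to the sublattice $H$ is again a lattice homomorphism, and that precomposition with the lattice isomorphism $\I^{-1}$ preserves this, so that $\omega$ genuinely preserves the lattice operations on $P$ and is not merely linear. The conceptual heart of the argument is simply the recognition that projectivity is exactly what lets us sit $P$ inside a free Banach lattice as a sublattice, where the desired separation property is already known.
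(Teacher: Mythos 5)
Your proof is correct and follows exactly the route the paper intends: the statement appears there as an immediate consequence of combining Theorem \ref{projs} (realizing $P$ as a closed sublattice $H$ of a free Banach lattice via the isomorphism $\I$) with Corollary \ref{freefinal} (real-valued lattice homomorphisms on $FBL(A)$ detect non-zero elements). Your observations that the projection $R$ and the norm estimates play no role, and that restriction to a sublattice and precomposition with a lattice isomorphism preserve the lattice homomorphism property, are precisely the small checks the paper leaves implicit.
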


In particular this tells us that, for finite $p$, the Banach lattice $L_p([0,1])$ is \emph{not} projective.

Similarly, from Corollary \ref{ctbledisjt} and Theorem \ref{projs}, using the lattice homomorphism projection from a free Banach lattice onto a projective, we see:

\begin{corollary}\label{projdisjt}Every disjoint system in a projective Banach lattice is at most countable.
\end{corollary}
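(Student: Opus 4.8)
The plan is to transfer the disjointness statement from $P$ across to a free Banach lattice, where Corollary~\ref{ctbledisjt} already settles it. Let $\{p_i:i\in I\}$ be a disjoint system of non-zero elements of a projective Banach lattice $P$. Applying the implication (i)$\Rightarrow$(ii) of Theorem~\ref{projs} with, say, $\epsilon=1$ produces a free Banach lattice $F$, a closed sublattice $H$ of $F$, a lattice isomorphism $\I:H\to P$ with bounded inverse, and a lattice homomorphism projection $R:F\to H$. The only feature I shall actually use is that $P$ is lattice isomorphic, via $\I$, to the closed sublattice $H$ of $F$; the projection $R$ merely exhibits $H$ as a retract of the free Banach lattice.

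First I would push the family into $H$. Since $\I^{-1}:P\to H$ is an injective lattice homomorphism, the elements $q_i:=\I^{-1}p_i$ are non-zero and, because lattice homomorphisms commute with the lattice operations,
\[|q_i|\wedge|q_j|=\I^{-1}\big(|p_i|\wedge|p_j|\big)=\I^{-1}(0)=0\qquad(i\ne j),\]
so $\{q_i:i\in I\}$ is a disjoint system in $H$.

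Next I would observe the one point that needs care: disjointness computed in the sublattice $H$ must agree with disjointness computed in $F$. As $H$ is a \emph{vector sublattice} of $F$, its finite lattice operations are exactly the restrictions of those in $F$, so $|q_i|\wedge|q_j|$ is the same element whether it is formed in $H$ or in $F$. Hence $\{q_i:i\in I\}$ is a disjoint system of non-zero elements in the free Banach lattice $F$. Corollary~\ref{ctbledisjt} now forces this family to be at most countable, and since $\I^{-1}$ is injective the index set $I$ is at most countable.

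The argument is short, and the only genuinely delicate point — the main obstacle, such as it is — is the verification that disjointness is not lost on passing from the sublattice $H$ to the ambient lattice $F$. This is precisely where it matters that Theorem~\ref{projs} gives $H$ as a closed \emph{sublattice} of $F$, rather than merely a closed subspace, since disjointness is a statement about the lattice operations and these are preserved only by sublattices.
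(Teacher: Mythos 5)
Your proof is correct and follows essentially the same route as the paper: invoke Theorem~\ref{projs} to realize $P$, up to lattice isomorphism, as a closed sublattice $H$ of a free Banach lattice, transfer the disjoint system into $H$ (where disjointness agrees with disjointness in the ambient lattice), and apply Corollary~\ref{ctbledisjt}. Your observation that the projection $R$ is not actually needed--only the sublattice embedding $\I^{-1}$--is a fair sharpening of the paper's one-line sketch, which mentions the projection, but the substance of the argument is identical.
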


Although, in a sense, Theorem \ref{projs} gives a complete description of
projective Banach lattices, given that we know little about free
Banach lattices it actually tells us very little. One immediate
consequence, given that $FBL(1)$ may be identified with
$\ell_\infty(2)$, is:

\begin{corollary}
The one dimensional Banach lattice $\Real$ is projective.
\end{corollary}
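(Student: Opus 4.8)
The plan is to verify condition (ii) of Theorem~\ref{projs}, in fact with all the constants equal to $1$ (which a fortiori gives the required inequalities for every $\epsilon>0$), and to take the free Banach lattice there to be $F=FBL(1)$. By the computation carried out at the start of the proof of Theorem~\ref{onegen}, $FBL(1)$ is (isometrically order) identified with $\Real^2$ under the supremum norm, i.e. with $\ell_\infty(2)$; this is the free Banach lattice I shall exploit, so no construction of $F$ is needed beyond quoting that identification.

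Next I would produce the closed sublattice together with the projection. Inside $\Real^2$ I take $H=\{(t,0):t\in\Real\}$, the first coordinate axis. Being one-dimensional it is closed, and it is a sublattice because $(s,0)\vee(t,0)=(s\vee t,0)$ and similarly for $\wedge$. The map $\I:H\to\Real$ defined by $\I(t,0)=t$ is then a lattice isomorphism onto $P=\Real$ which, for the supremum norm on $H$, is an isometry, so $\|\I\|=\|\I^{-1}\|=1$, giving (2)(c). For (2)(b) I take $R:\Real^2\to H$ to be the coordinate projection $R(s,t)=(s,0)$: it has norm $1$, satisfies $R|_H=\mathrm{id}_H$ so that $R$ is a projection of $F$ onto $H$, and it is a lattice homomorphism since $R\big((s,t)\vee(s',t')\big)=(s\vee s',0)=R(s,t)\vee R(s',t')$ (and likewise for $\wedge$). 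Thus all three ingredients (a), (b), (c) of Theorem~\ref{projs}(ii) are in hand, with every bound equal to $1$.

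Applying Theorem~\ref{projs} then yields that $\Real$ is projective. I do not anticipate any genuine obstacle: the substance is carried entirely by the identification $FBL(1)\cong\ell_\infty(2)$ and by the characterization in Theorem~\ref{projs}, and the only things needing checking — that $R$ preserves the lattice operations and restricts to the identity on $H$, and that $\I$ is an isometric lattice isomorphism — are immediate from the coordinatewise description of the lattice operations and the norm in $\ell_\infty(2)$. (One could alternatively verify the definition of projectivity directly, since a lattice homomorphism $T:\Real\to X/J$ is just $t\mapsto t\,T(\1)$ for a positive $T(\1)$, which lifts to a positive element of $X$ of norm at most $(1+\epsilon)\|T\|$; but the route through Theorem~\ref{projs} is the one the preceding discussion sets up.)
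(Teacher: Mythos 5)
Your proof is correct and is essentially the paper's own argument: the paper derives this corollary exactly by identifying $FBL(1)$ with $\ell_\infty(2)$ and invoking Theorem~\ref{projs}, which is what you carry out in detail (with all constants equal to $1$). The only blemish is cosmetic: you have swapped the labels (2)(b) and (2)(c) of Theorem~\ref{projs} --- the isomorphism $\I$ is condition (b) and the projection $R$ is condition (c) --- but this does not affect the argument.
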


Of course, this is easy to verify directly, but it does show that
there are projective Banach lattices which are not free.

Let us note also one rather simple consequence of the characterization of projectives in Theorem \ref{projs}.

\begin{corollary}If $X$ is a projective Banach lattice, $H$ a closed sublattice of $X$ for which there is a contractive lattice homomorphism projecting $X$ onto $H$, then $H$ is a projective Banach lattice.
\end{corollary}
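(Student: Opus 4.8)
The plan is to verify condition (ii) of Theorem \ref{projs} for $H$, using that the very same condition holds for $X$. Write $P:X\to H$ for the given contractive lattice homomorphism projection, and recall that the inclusion $\iota:H\to X$ is an isometric lattice embedding, so that $P$ is the identity on $H$. Fix $\epsilon'>0$ and choose $\epsilon>0$ so small that $(1+\epsilon)^3\le 1+\epsilon'$. Since $X$ is projective, Theorem \ref{projs} furnishes a free Banach lattice $F$, a closed sublattice $H_X$ of $F$, a lattice isomorphism $\I:H_X\to X$ with $\|\I\|,\|\I^{-1}\|\le 1+\epsilon$, and a lattice homomorphism projection $R:F\to H_X$ with $\|R\|\le 1+\epsilon$.

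Next I would locate inside $F$ a closed sublattice modelling $H$. Set $K=\I^{-1}(H)$. As $\I$ is a bicontinuous lattice isomorphism and $H$ is a closed sublattice of $X$, the set $K$ is a closed vector sublattice of $H_X$, hence of $F$, and $\I(K)=H$ because $\I$ is onto $X$. Thus the restriction $J=\I|_K:K\to H$ is a lattice isomorphism of $K$ onto $H$ with $\|J\|,\|J^{-1}\|\le 1+\epsilon$, which supplies ingredients (a) and (b) of condition (ii).

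It then remains to produce a bounded lattice homomorphism projection of $F$ onto $K$. The natural candidate is
\[R'=J^{-1}\circ P\circ \I\circ R:F\to F,\]
a composition of lattice homomorphisms, hence itself a lattice homomorphism, whose norm is at most $\|J^{-1}\|\,\|P\|\,\|\I\|\,\|R\|\le (1+\epsilon)^3\le 1+\epsilon'$. The only point that requires genuine care is that $R'$ is a projection onto $K$: for $k\in K$ one has $R(k)=k$ (as $k\in H_X$ and $R$ projects onto $H_X$), then $\I(k)\in H$ so $P(\I(k))=\I(k)$, and finally $J^{-1}(\I(k))=k$ since $J=\I|_K$; meanwhile the image of $R'$ plainly lies in $K$, so $R'$ is the required idempotent. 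With $F$, $K$, $J$ and $R'$ in hand, condition (ii) of Theorem \ref{projs} is verified for $H$ with constant $1+\epsilon'$, and since $\epsilon'>0$ was arbitrary, $H$ is projective. I do not anticipate a serious obstacle here; the substance of the argument is simply arranging the four maps in the correct order and tracking the multiplicative effect on the norms so that the cube of $1+\epsilon$ can be absorbed into $1+\epsilon'$.
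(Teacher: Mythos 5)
Your proof is correct: the set $K=\I^{-1}(H)$ is indeed a closed sublattice of $F$, the map $R'=J^{-1}\circ P\circ \I\circ R$ is a lattice homomorphism whose restriction to $K$ is the identity and whose range lies in $K$, and the norm bound $(1+\epsilon)^3\le 1+\epsilon'$ is tracked properly, so condition (ii) of Theorem \ref{projs} holds for $H$ and $H$ is projective. However, you have taken a longer route than the paper intends. The point of including condition (iii) in Theorem \ref{projs} --- where the ambient lattice $F$ is allowed to be \emph{any} projective Banach lattice rather than a free one --- is exactly to make this corollary a one-line application: take $F=X$ itself (projective by hypothesis), take $\I$ to be the identity map of $H$ (so $\|\I\|=\|\I^{-1}\|=1\le 1+\epsilon$), and take $R=P$ the given contractive projection (so $\|R\|\le 1\le 1+\epsilon$); then (iii) holds for every $\epsilon>0$ and the implication (iii)$\Rightarrow$(i) gives the result immediately. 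What your argument does instead is first pass from $X$ to a free lattice via (i)$\Rightarrow$(ii) and then reconstruct, by hand, the composition and norm-bookkeeping that the paper's proof of (iii)$\Rightarrow$(i) already carries out once and for all; in effect you have re-proved that implication in this special case. In compensation, your argument delivers slightly more explicit information --- it exhibits $H$ as an almost-isometrically complemented closed sublattice of a \emph{free} Banach lattice, i.e.\ it produces the condition (ii) data for $H$ directly --- but for the stated corollary this extra work is unnecessary.
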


\section{Which Banach lattices are projective?}

We will now approach matters from the other end. We try to find out
as much as we can about projective Banach lattices and deduce
information about the structure of free Banach lattices. We will
start by identifying some ``small'' Banach lattices, apart from free
ones, which are projective. After that we will show that certain
AL-sums of projectives are again projective.

Our first positive result may be slightly surprising, given that
when dealing with Banach spaces the free and projective objects are precisely the
spaces $\ell_1(I)$, \cite{Se}, Theorem 27.4.2.

\begin{theorem}Every finite dimensional Banach lattice is
projective.
\begin{proof}Let $P$ be a finite dimensional Banach lattice, $X$ an
arbitrary Banach lattice, $J$ a closed ideal in $X$, $Q:X\to X/J$
the quotient map, $T:P\to X/J$ a lattice homomorphism and
$1\ge\epsilon>0$. We identify $P$ with $\Real^n$ with the pointwise
order and normed by some lattice norm $\|\cdot\|_P$. Without loss of
generality we may assume that the standard basic vectors in
$\Real^n$, $e_k$, all have $\|e_k\|_P=1$. Let $\{p_k:1\le k\le m\}$
be an $\epsilon$-net for the compact set $\{p\in
\Real^n_+:\|p\|_P=1\}$. We write $p_k=(p_k^1,p_k^2,\dots,p_k^n)$.

As $T$ is a lattice homomorphism, the family $(Te_k)_{k=1}^n$ is a
disjoint family in $(X/J)_+$ so by Proposition \ref{finitelifting}
there is a disjoint family $(s_k)_{k=1}^n$ in $X_+$ with $Qs_k=Te_k$
for $1\le k\le n$. By the definition of the quotient norm, for each
$k$ there is $t_k\in X$ with $Qt_k=Te_k$ and $\|t_k\|\le
\|Te_k\|+\epsilon\le \|T\|+\epsilon$. Now, let $x_k= s_k\wedge
t_k^+$, so that the family $(x_k)$ remains disjoint. As $Q$ is a
lattice homomorphism, $Qx_k=Qs_k\wedge Qt_k^+=(Te_k)\wedge
(Te_k)^+=Te_k$. Also, we now have $\|x_k\|\le \|t_k^+\|\le
\|t_k\|\le \|T\|+\epsilon$.

Also, for each $i\in \{1,2,\dots,m\}$ there is $q_i\in X_+$ with
$Qq_i=Tp_i$ and $\|q_i\|\le \|Tp_i\|+\epsilon\le \|T\|+\epsilon$.

Define $z_k=x_k\wedge {\bigwedge'}_{i=1}^m (p_i^k)^{-1} q_i$ where
the ${}'$ indicates that terms where $p_i^k=0$ are omitted. As the
family $(x_k)$ is disjoint, the same is true for the family $(z_k)$.
If $p_i^k>0$ then $(p_i^k)^{-1}p_i\ge e_k$ so that $(p_i^k)^{-1}
Qq_i=(p_i^k)^{-1}Tp_i\ge Te_k$ so that $Qz_k=Qx_k=Te_k$.

Define $Se_k=z_k$ and extend $S$ linearly to a lattice homomorphism
(because the $(z_k)$ are disjoint) of $\Real^n\to X$. Clearly
$Q\circ S_k=T$. As $\Real^n$ is finite dimensional, there is a
constant $K\in\Real_+$ such that $\|x\|_1\le K\|x\|_P$ for all
$x\in\Real^n$. It follows that
\begin{align*}
\left\|S(\sum_{k=1}^n \lambda_k e_k\right\|&\le \sum_{k=1}^n
|\lambda_k|
\|Se_k\|\\
&=\sum_{k=1}^n |\lambda_k| \|z_k\|\\
&\le \sum_{k=1}^n |\lambda_k|\|x_k\|\\
&\le \left\|\sum_{k=1}^n \lambda_k e_k\right\|(\|T\|+1)\\
&\le K(\|T\|+1)\left\|\sum_{k=1}^n \lambda_k e_k\right\|_P
\end{align*}
so that $\|S\|\le K(\|T\|+1)$. Note that this estimate is
independent of the choice of $\epsilon$.

In order better to estimate the norm of $S$, we write
$p_i=\sum_{k=1}^n p_i^k e_k$ and see that
\begin{align*}Sp_i&=\sum_{k=1}^n S(p_i^k e_k)\\&=\sum_{k=1}^n p_i^k
Se_k\\&=\sum_{k=1}^n p_i^k z_k.
\end{align*}
Also, if $p_i^k=0$ then certainly $p_i^k z_k\le q_i$, whilst if
$p_i^k>0$ then  $p_i^k z_k\le p_i^k(p_i^k)^{-1}q_i=q_i$. As $p_i^j
z_j\perp p_i^k z_k$ if $j\ne k$ we see that $\sum_{k=1}^n p_i^k
z_k\le q_i$ so that $Sp_i\le q_i$ and $\|Sp_i\|\le \|q_i\|\le
\|T\|+\epsilon$. Now if we take an arbitrary $p\in \{P_+:\|p\|=1\}$
then we can choose $i$ with $\|p-p_i\|_P<\epsilon$, so that
\begin{align*}
\|Sp\|&\le \|Sp_i\|+\|S\|\|p-p_i\|_P\\
&\le\|T\|+\epsilon+K(\|T\|+1)\epsilon
\end{align*}
which can be made as close to $\|T\|$ as we desire.
\end{proof}
\end{theorem}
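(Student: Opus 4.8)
The plan is to exploit the fact that a lattice homomorphism out of $\Real^n$, ordered pointwise, is completely determined by a disjoint positive family. Identifying $P$ with $\Real^n$ under some lattice norm $\|\cdot\|_P$, with standard basis $(e_k)$ normalized so that $\|e_k\|_P=1$, the images $(Te_k)_{k=1}^n$ form a disjoint family in $(X/J)_+$ (a lattice homomorphism is positive and carries the disjoint $e_k$ to disjoint elements), and conversely \emph{any} disjoint positive lift of this family in $X$ induces a lattice homomorphism lifting $T$. Thus the existence of a lift is not the issue: Proposition \ref{finitelifting} already supplies a disjoint family $(s_k)$ in $X_+$ with $Qs_k=Te_k$. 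What must be done is to control its norm.

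First I would secure a lift with individual norm control. Using the definition of the quotient norm, for each $k$ I choose $t_k\in X$ with $Qt_k=Te_k$ and $\|t_k\|\le\|T\|+\epsilon$, and set $x_k=s_k\wedge t_k^+$. Then $(x_k)$ is still disjoint, $Qx_k=Qs_k\wedge Qt_k^+=Te_k\wedge Te_k=Te_k$, and $\|x_k\|\le\|t_k\|\le\|T\|+\epsilon$. Declaring $Se_k=x_k$ and extending linearly already gives a lattice homomorphism lifting $T$, and finite-dimensional norm equivalence yields a crude bound $\|S\|\le K(\|T\|+1)$ for some constant $K$ depending only on $P$.

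The main obstacle is sharpening this to $(1+\epsilon)\|T\|$, and here I would use compactness. Fix a finite $\epsilon$-net $\{p_1,\dots,p_m\}$ for the compact positive unit sphere $\{p\in\Real^n_+:\|p\|_P=1\}$, and for each $i$ lift $Tp_i$ to $q_i\in X_+$ with $\|q_i\|\le\|T\|+\epsilon$. The key device is to force $Sp_i\le q_i$ by replacing $x_k$ with
\[
z_k=x_k\wedge {\bigwedge'}_{i=1}^m (p_i^k)^{-1}q_i,
\]
where the prime omits the indices with $p_i^k=0$. Since $(p_i^k)^{-1}p_i\ge e_k$ in $\Real^n$ whenever $p_i^k>0$, one has $Q\big((p_i^k)^{-1}q_i\big)=(p_i^k)^{-1}Tp_i\ge Te_k$, so meeting does not lower the quotient image and $Qz_k=Te_k$ persists, while $z_k\le x_k$ keeps $\|z_k\|\le\|T\|+\epsilon$. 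The $(z_k)$ remain disjoint, so redefining $Se_k=z_k$ gives a lattice homomorphism with $Q\circ S=T$; and for each net point $p_i^k z_k\le q_i$ for every $k$, whence by disjointness $Sp_i=\sum_k p_i^k z_k\le q_i$ and $\|Sp_i\|\le\|T\|+\epsilon$.

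Finally I would pass from the net back to the whole sphere. As $S$ is a lattice homomorphism, $\|S\|$ equals the supremum of $\|Sp\|$ over positive $p$ with $\|p\|_P\le 1$; for such a $p$ I choose $p_i$ with $\|p-p_i\|_P<\epsilon$ and estimate $\|Sp\|\le\|Sp_i\|+\|S\|\,\|p-p_i\|_P\le(\|T\|+\epsilon)+K(\|T\|+1)\epsilon$, using the crude bound for the second term. Since the right-hand side tends to $\|T\|$ as $\epsilon\downarrow 0$, choosing $\epsilon$ small enough forces $\|S\|\le(1+\epsilon')\|T\|$ for any prescribed $\epsilon'>0$, establishing projectivity. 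The only genuinely delicate point is this last norm estimate: disjoint lifting alone gives a lift, but squeezing its norm down to $(1+\epsilon)\|T\|$ rather than a fixed multiple of $\|T\|$ is precisely what the net-and-meet construction is engineered to achieve.
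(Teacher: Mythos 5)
Your proposal is correct and follows essentially the same route as the paper's own proof: the disjoint lifting of $(Te_k)$, the meet $x_k=s_k\wedge t_k^+$ for individual norm control, the $\epsilon$-net of the positive unit sphere with lifts $q_i$, the refined elements $z_k=x_k\wedge{\bigwedge'}_i(p_i^k)^{-1}q_i$ forcing $Sp_i\le q_i$, and the final estimate combining the crude bound $K(\|T\|+1)$ with the net. The only addition worth noting is that you make explicit (correctly) why the supremum over positive unit vectors suffices to compute $\|S\|$, a point the paper leaves implicit.
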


The spaces $C(K)$, for $K$ a compact Hausdorff space, play a
distinguished r\^{o}le in the general theory of Banach lattices so
it is worth knowing which $C(K)$ spaces are projective. We give here
a partial answer, which is already of substantial interest. We refer
the reader to \cite{Bo} for basic concepts about retracts.

\begin{theorem}\label{fdCKs}If $K$ is a compact subset of $\Real^n$ for some
$n\in\Natural$ then the following are equivalent:
\begin{enumerate}
\item $C(K)$ is a projective Banach lattice under some norm.
\item $C(K)$ is projective under the supremum norm.
\item $K$ is a neighbourhood retract of $\Real^n$.
\end{enumerate}
\begin{proof}
Without loss of generality we may suppose that $K$ is a subset of
the unit ball in $\Real^n$ for the supremum norm. We write $p_k$ for
the restriction to $K$ of the $k$'th coordinate projection in
$\Real^n$ and $p_0$ for the constantly one function on $K$. The
vector sublattice generated by the $\{p_k:0\le k\le n\}$ is
certainly dense in $C(K)$ by the Stone-Weierstrass theorem. As
$F(n+1)$ is free there is a bounded vector lattice homomorphism
$T:F(n+1)\to C(K)$ with $T(\delta_k)=p_{n-1}$. We know that,
algebraically, we may identify $F(n+1)$ with $C(F_{n+1})$ and that
the constantly one function on $F_{n+1}$ is precisely
$\bigvee_{k=1}^{n+1}|\delta_k|$. As $\bigvee_{k=0}^n|p_k|=p_0$, here
is where we use the boundedness assumption on $K$, we may regard $T$
as a unital lattice homomorphism from $C(F_{n+1})$ to $C(K)$. Such
maps are of the form $f\mapsto f\circ \phi$ where $\phi:K\to
F_{n+1}$ is continuous. The image of $C(F_{n+1})$ is dense in $C(K)$
and it is well known that the image of such composition maps is
closed so that $T$ is onto. This is equivalent to $\phi$ being injective.
I.e. we have a topological embedding of $K$ into $F_{n+1}$ and we
may regard $T$ as simply being the restriction map from $C(F_{n+1})$
to $C(K)$. So far we have not used the assumption that $C(K)$ is
projective.

If $J$ is the kernel of $T$ then $C(F_{n+1})/J$ is isomorphic to
$C(K)$. If $C(K)$ is projective (even in a purely algebraic sense)
then there is a vector lattice homomorphism $U:C(K)\to C(F_{n+1})$
with $Uf_{|K}=f$ for all $f\in C(K)$. But $U$ is of the form
\[Uf(p)=\begin{cases}
w(p) f(\pi p)&(p\in U)\\
0&(p\notin U) \end{cases}
\]
where $w$ is a non-negative continuous real-valued function on
$F_{n+1}$ and $\pi:F_{n+1}\setminus w^{-1}(0)\to K$, so we must have
$w(p)=1$ and $\pi p=p$ for $p\in K$. Thus $F_{n+1}\setminus
w^{-1}(0)$ is open and contains $K$ so that $\pi$ is a neighbourhood
retract of $F_{n+1}$ onto $K$. If we remove any single point from
$F_{n+1}$ that is not in $K$ then what remains is homeomorphic to
$\Real^n$ so we have a neighbourhood retraction from $\Real^n$ onto
$K$. This only fails to be possible if $K=F_{n+1}$, and that is not
homeomorphic to a subset of $\Real^n$ by the Borsuk-Ulam Theorem,
see for example Theorem 5.8.9 of \cite{Sp}.  Thus (i) implies (iii).

Clearly (ii) implies (i), so we need only prove that (iii) implies (ii).
The blanket assumption on $K$ tells us that it is homeomorphic to a
subset of one face $G$ of $F_{n+1}$. By scaling it if necessary, we
may assume that it is a neighbourhood retract of $G$  and therefore
of the whole of $F_{n+1}$. That allows us to construct a continuous
$w:F_{n+1}\to [0,1]$ with $U=\{p\in F_{n+1}:w(p)>0\}\subset G$,
$K\subset w^{-1}(1)$ and a continuous retract  $\pi:U\to K$. The
vector lattice homomorphism $U:C(F_{n+1})\to C(F_{n+1})$ defined by
\[Uf(p)=\begin{cases}
w(p) f(\pi p)&(p\in U)\\
0&(p\notin U) \end{cases}
\] is certainly a projection. For any $p\in
F_{n+1}$ we have, writing $J^K=\{f\in C(F_{n+1}):f_{|K}\equiv0\}$,
\begin{align*}\|Uf\|_F&=\|Uf\|_\infty&\text{(Corollary
\ref{AMsubset})}\\
&=\sup\{|w(p) f(\pi p)|:p\in U\}\\
&\le \sup\{|f(\pi p):p\in U\}\\
&=\le \sup\{|f(k):k\in K\}=\|f_{|K}\|_\infty\\
&=\|f+J^K\|&\text{(Corollary \ref{nicequotients})}\\
&\le \|f\|_F
\end{align*}
so that $U$ is a contraction.

We claim also that the image $UC(F_{n+1})$ is isometrically order
isomorphic to $C(K)$ under its supremum norm. To prove this, it
suffices to prove that $Uf\mapsto F_{|K}$ is an isometry for the
free norm on $Uf$, which is equal to its supremum norm, and the
supremum norm on $f_{|K}$. The calculation above shows that
$\|Uf\|_\infty\le \|f_{|K}\|_\infty$. We also have, for $p\in U$,
$|Uf(p)|=|w(p)||f(\pi p)|\le \|f_{|K}\|_\infty$ as $|w(p)|\le 1$ and
$\pi p\in K$. Thus $\|Uf\|_\infty\le \|f_{|K}\|_\infty$ and we have
our desired isometry.

In view of Theorem \ref{projs}, this shows that $C(K)$ is
projective.
\end{proof}
\end{theorem}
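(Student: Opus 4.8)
The plan is to run the cycle (ii)$\Rightarrow$(i)$\Rightarrow$(iii)$\Rightarrow$(ii), of which only (ii)$\Rightarrow$(i) is immediate. The common device is to present $C(K)$ as a quotient of the finitely generated free Banach lattice $FBL(n+1)$, which by \S7 may be identified as a vector lattice (up to equivalent renorming, by Corollary \ref{fnrepn} and the discussion of the free norm) with $C(F_{n+1})$, where $F_{n+1}\subset\Real^{n+1}$ is the supremum-norm unit sphere, i.e. the union of the proper faces of $[-1,1]^{n+1}$. After rescaling so that $K$ lies in the closed unit ball of $\Real^n$, the coordinate projections $p_1,\dots,p_n$ together with $p_0\equiv 1$ generate a dense sublattice of $C(K)$ by Stone--Weierstrass and satisfy $\bigvee_{k=0}^n|p_k|=p_0$. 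The free property gives a unital lattice homomorphism $T\colon FBL(n+1)\to C(K)$ carrying the generators to the $p_k$; identifying $FBL(n+1)$ with $C(F_{n+1})$, $T$ is unital between $C(\cdot)$-spaces, hence of composition form $Tf=f\circ\phi$ for a continuous $\phi\colon K\to F_{n+1}$. As the coordinates separate the points of $K$, $\phi$ is injective, so it embeds $K$ topologically into $F_{n+1}$ and $T$ becomes restriction; its image, being dense and (composition operators having closed range) closed, is all of $C(K)$, so $C(F_{n+1})/\ker T\cong C(K)$.

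For (i)$\Rightarrow$(iii) I would exploit projectivity of $C(K)$ (in any lattice norm, the argument being algebraic) to obtain a lattice homomorphism section $U\colon C(K)\to C(F_{n+1})$ with $(Uf)|_K=f$. Writing $U$ in weighted-composition form $Uf(p)=w(p)f(\pi p)$ on the open set $\{w>0\}$ and $0$ elsewhere, the section identity forces $w\equiv 1$ and $\pi=\mathrm{id}$ on $K$, so $\{w>0\}$ is an open neighbourhood of $K$ in $F_{n+1}$ retracting onto $K$, i.e. $K$ is a neighbourhood retract of $F_{n+1}$. To descend to $\Real^n$, note that $F_{n+1}$ is homeomorphic to $S^n$, which cannot embed in $\Real^n$ (Borsuk--Ulam, or invariance of domain), so $K\subsetneq F_{n+1}$; deleting a point of $F_{n+1}\setminus K$ leaves a copy of $\Real^n$ containing $K$, and the retraction restricts to the required neighbourhood retraction of $\Real^n$.

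For (iii)$\Rightarrow$(ii) I would, after a harmless rescaling, seat $K$ in the relative interior of a single maximal face $G$ of $F_{n+1}$, so that a neighbourhood retract of $K$ in $\Real^n$ becomes one in $F_{n+1}$ whose support lies in $G$. Choosing a continuous $w\colon F_{n+1}\to[0,1]$ with $\{w>0\}\subset G$, $w\equiv 1$ on $K$, and a retraction $\pi$ of $\{w>0\}$ onto $K$, define the lattice homomorphism $Vf(p)=w(p)f(\pi p)$ (zero off $\{w>0\}$), which a routine check shows to be a projection. The decisive estimate draws on the AM-structure of \S7: since $Vf$ vanishes off the single face $G$, Corollary \ref{AMsubset} gives $\|Vf\|_F=\|Vf\|_\infty$, while Corollary \ref{nicequotients} identifies the free quotient norm of $f|_K$ with $\|f|_K\|_\infty$; hence $\|Vf\|_F=\|Vf\|_\infty\le\|f|_K\|_\infty\le\|f\|_F$, so $V$ is a contractive projection and $f\mapsto f|_K$ is an isometry of its closed sublattice range onto $(C(K),\|\cdot\|_\infty)$. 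Theorem \ref{projs} then yields projectivity of $C(K)$ in the supremum norm.

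The main obstacle I anticipate is topological bookkeeping rather than the Banach-lattice content. First, I must justify carefully the weighted-composition representation of lattice homomorphisms into $C(F_{n+1})$, including the precise description of the open set on which the weight is positive and the consequent support properties. Second, the passage between neighbourhood retracts of $F_{n+1}\cong S^n$ and of $\Real^n$ rests on the non-embeddability of $S^n$ in $\Real^n$ and on being able to place $K$, after rescaling, inside the relative interior of one face. By contrast, the norm estimates fall out cleanly from the AM-type results already established, so the genuine work is geometric.
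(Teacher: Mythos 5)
Your proposal is correct and follows essentially the same route as the paper: the same presentation of $C(K)$ as a quotient of $FBL(n+1)\cong C(F_{n+1})$ via a unital composition operator, the same weighted-composition section and Borsuk--Ulam argument for (i)$\Rightarrow$(iii), and the same face-supported contractive lattice projection built from Corollaries \ref{AMsubset} and \ref{nicequotients} together with Theorem \ref{projs} for (iii)$\Rightarrow$(ii). The only (harmless) variation is that you deduce injectivity of $\phi$ directly from the coordinates separating points of $K$, where the paper infers it from surjectivity of $T$.
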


The reader will notice that the first implication would actually
work for an isomorphic version of projectivity. We allude further to
this in \S\ref{problems}.

Notice that some $C(K)$-spaces can be projective for different
(necessarily equivalent) Banach lattice norms. E.g. $C(F_n)$ will be
projective both under the free and supremum norms.

Descriptions of absolute neighbourhood retracts in the
category of compact metric spaces may be found in Chapter V of \cite{Bo}. We note two
particular properties that they have. Firstly, absolute
neighbourhood retracts have only finitely many components
(\cite{Bo}, V.2.7) and if $K$ is an absolute neighbourhood retract
subset of $\Real^n$ then $\Real^n\setminus K$ has only finitely many
components (\cite{Bo}, V.2.20).

In particular, we have

\begin{corollary}The sequence space $c$ is not projective.
\begin{proof}We can identify $c$ with
$C(K_0)$ where $K_0=\{\frac1n:n\in\Natural\}\cup\{0\})$. As
$K_0\subset \Real$ and $K_0$ has infinitely many components it is
not an absolute neighbourhood retract.
\end{proof}
\end{corollary}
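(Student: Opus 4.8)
The plan is to realize $c$ as a $C(K)$ space over a compact subset of $\Real$ and then invoke Theorem \ref{fdCKs}. First I would identify the space $c$ of convergent real sequences with $C(K_0)$, where $K_0=\{1/n:n\in\Natural\}\cup\{0\}$ is the one-point compactification of a countable discrete set: a sequence $(a_n)$ with limit $a_\infty$ corresponds to the function $f\in C(K_0)$ given by $f(1/n)=a_n$ and $f(0)=a_\infty$, and this correspondence is an isometric order isomorphism for the supremum norms. Since $K_0$ is a compact subset of $\Real$, Theorem \ref{fdCKs} applies with $n=1$, so $c\cong C(K_0)$ is projective (under some, equivalently the supremum, norm) if and only if $K_0$ is a neighbourhood retract of $\Real$.

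Next I would argue that $K_0$ fails to be such a retract. Every point $1/n$ is isolated in $K_0$, the only accumulation point being $0$, so $K_0$ is totally disconnected and has infinitely many connected components. For a compact subset of $\Real$, being a neighbourhood retract of $\Real$ is precisely being an absolute neighbourhood retract, and by the cited fact (\cite{Bo}, V.2.7) an absolute neighbourhood retract has only finitely many components. Hence $K_0$ is not a neighbourhood retract of $\Real$, and Theorem \ref{fdCKs} then yields that $c$ is not projective.

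The one step deserving care is the link between condition (iii) of Theorem \ref{fdCKs}, phrased in terms of neighbourhood retracts of $\Real^n$, and the component-counting fact, phrased for absolute neighbourhood retracts. This is standard ANR theory, but if one prefers to avoid it, the non-existence of a retraction can be seen directly: any open $U\supseteq K_0$ in $\Real$ contains an interval $(-\delta,\delta)$ about $0$, which is connected, so under any retraction $\pi:U\to K_0$ its image would be a connected subset of $K_0$. However $\pi$ fixes each of the infinitely many points $1/n\in(-\delta,\delta)$, so this image contains more than one point, contradicting the fact that the only connected subsets of $K_0$ are singletons.
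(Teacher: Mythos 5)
Your proof is correct and follows essentially the same route as the paper: identify $c$ with $C(K_0)$, apply Theorem \ref{fdCKs}, and rule out the neighbourhood-retract property via the fact that an absolute neighbourhood retract has only finitely many components. Your closing direct argument --- that a retraction $\pi:U\to K_0$ would send the connected interval $(-\delta,\delta)$ onto a connected subset of the totally disconnected $K_0$ containing infinitely many fixed points $1/n$ --- is a nice self-contained substitute for the citation to Borsuk (\cite{Bo}, V.2.7), which is all the paper itself relies on.
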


There seems little hope of removing the assumption of finite dimensionality from $K$ in Theorem \ref{fdCKs}. We can rescue one implication when we recall that, as closed bounded convex subsets of $\Real^n$ are
absolute retracts in the category of compact Hausdorff spaces, any
compact neighborhood retract of $\Real^n$ will necessarily be an
absolute neighbourhood retract in the category of compact Hausdorff
spaces.

\begin{proposition}If $C(K)$ is a projective Banach lattice under the supremum, or an equivalent, norm then $K$ is an absolute neighbourhood retract in the category of compact Hausdorff spaces.
\begin{proof}
Suppose that $K$ is a closed subset of a compact Hausdorff space $X$. We need to show that there is a continuous retraction $\pi$ of $U$ onto $K$, where $U$ is an open subset of $X$ with $K\subset U$.

The restriction map $R:C(X)\to C(K)$ may be identified with the canonical quotient map of $C(X)$ onto $C(X)/J$ where $J$ is the closed ideal $\{f\in X(K):f_{|K}\equiv 0\}$. If $C(K)$ is projective then the identity on $C(K)$ lifts to a lattice homomorphism $T:C(K)\to C(X)$ with $R\circ T-I_{C(K)}$. There is a continuous function $w$ from $X$ into $\Real_+$ and a continuous map $\pi:U=\{x\in X: w(x)>0\}\to K$ such that
\[Tf(x)=\begin{cases}w(x) f(\pi x)&[w(x)>0]\\
0&[w(x)=0].
\end{cases}\]
If  $k\in K$ then $Tf(k)=f(k)$ so that  $\pi k=k$ and $w(k)=1$ showing that $K\subset U$ and that $\pi$ is a retraction of the open set $U$ onto $K$.
\end{proof}
\end{proposition}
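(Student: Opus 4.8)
The plan is to test projectivity against exactly the family of quotient maps that geometrically encodes the neighbourhood-retraction problem, namely restriction maps between spaces of continuous functions. So I would begin by supposing that $K$ is embedded as a closed subset of an arbitrary compact Hausdorff space $X$, the goal being to produce an open set $U\supseteq K$ and a continuous retraction $\pi:U\to K$. First I would observe that the restriction map $R:C(X)\to C(K)$, $Rf=f_{|K}$, is a surjective lattice homomorphism (surjectivity by Tietze's theorem, since compact Hausdorff spaces are normal) whose kernel is the closed ideal $J=\{f\in C(X):f_{|K}\equiv0\}$; hence $R$ may be identified with the canonical quotient map $C(X)\to C(X)/J$, and $C(X)/J$ with $C(K)$.

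Next, since $C(K)$ is assumed projective, I would apply the definition of projectivity to the identity homomorphism $I_{C(K)}:C(K)\to C(X)/J\cong C(K)$: there is a lattice homomorphism $T:C(K)\to C(X)$ with $R\circ T=I_{C(K)}$, that is $(Tf)_{|K}=f$ for every $f\in C(K)$. The accompanying norm estimate $\|T\|\le(1+\epsilon)$ is available but will not be needed, which is exactly why the hypothesis may be stated for the supremum \emph{or an equivalent} norm: only the existence of the lifting $T$, not its size, enters the argument.

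The crux is the classical representation of a linear lattice homomorphism between spaces of continuous functions on compact Hausdorff spaces: $T$ must have the weighted-composition form
\[
Tf(x)=\begin{cases} w(x)\,f(\pi x)&(w(x)>0)\\ 0&(w(x)=0),\end{cases}
\]
where $w=T\1:X\to\Real_+$ is continuous and $\pi$ is a continuous map from the open set $U=\{x\in X:w(x)>0\}$ into $K$. Granting this, I would evaluate at points of $K$: for $k\in K$ the identity $Tf(k)=f(k)$ holds for every $f$, so $w(k)f(\pi k)=f(k)$ for all $f\in C(K)$; choosing $f$ with $f(k)\neq0$ forces $w(k)=1$ and $\pi k=k$. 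In particular $K\subseteq U$, $U$ is open, and $\pi_{|K}=\mathrm{id}_K$, so $\pi:U\to K$ is precisely the sought neighbourhood retraction, establishing that $K$ is an absolute neighbourhood retract in the category of compact Hausdorff spaces.

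The main obstacle is securing the weighted-composition representation of $T$ and, with it, both the openness of $U$ and the continuity of $\pi$ on all of $U$; these are exactly what convert the abstract lifting into genuine topological data. The representation itself is standard (positivity together with the multiplicative behaviour of disjoint elements pins down the support map $\pi$ and the weight $w$), and the same form was already used in the finite-dimensional Theorem \ref{fdCKs}. The one point deserving care is that $\pi$ need be defined and continuous only where $w>0$, which is precisely the open neighbourhood $U$ of $K$ that we want — so no extension of $\pi$ beyond $U$ is required, and the argument closes.
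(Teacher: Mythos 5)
Your proposal is correct and follows essentially the same route as the paper's own proof: identify the restriction map $R:C(X)\to C(K)$ with the quotient by the ideal of functions vanishing on $K$, lift the identity through projectivity to a lattice homomorphism $T:C(K)\to C(X)$, invoke the weighted-composition representation $Tf(x)=w(x)f(\pi x)$ on $U=\{w>0\}$, and evaluate at points of $K$ to see $w\equiv 1$ and $\pi=\mathrm{id}$ there, giving the neighbourhood retraction. The only cosmetic difference is that you spell out why $R$ is surjective (Tietze) and why the norm bound is irrelevant, points the paper leaves implicit.
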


Without knowledge of the properties of absolute neighbourhood retracts in the category of compact Hausdorff spaces, this does not tell us a lot. There seems to be very little material in the literature on absolute neighbourhood retracts in this setting, so we make our own modest contribution here.

\begin{lemma}If $C$ is a compact convex subset of a locally convex space, $K$ a closed subset of $C$ and $U$ an open subset of $C$ with $K\subseteq U\subset C$ then there is an open set $V$ with $K\subseteq V \subseteq U\subseteq C$ such that $V$ has finitely many components.
\begin{proof}
As $U$ is open, if $k\in K$ there is a convex open set $W_k$ with $k\in W_k\subseteq U$, using local convexity. The open sets $W_k$, for $k\in K$, cover the compact set $K$ so there is a finite subcover, $W_1, W_2,\dots, W_n$. Take $V=\bigcup_{k=1}^n W_k$.
\end{proof}
\end{lemma}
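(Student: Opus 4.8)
The plan is to exploit the compactness of $K$ together with the fact that a locally convex space has a base of convex open sets. The crucial structural observation is that a convex set is connected, so if $K$ can be covered by \emph{finitely many} convex open subsets of $U$, then their union will automatically have only finitely many components. First I would note that $K$, being a closed subset of the compact set $C$, is itself compact; this is what will let me pass from a pointwise construction to a finite one.

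Next I would produce, for each $k\in K$, a convex open neighbourhood of $k$ lying inside $U$. Since $U$ is open in $C$ and contains $k$, there is an open set $O$ in the ambient locally convex space with $k\in O\cap C\subseteq U$. Local convexity then supplies a convex open neighbourhood $\widetilde W$ of $k$ in the ambient space with $\widetilde W\subseteq O$, and I set $W_k=\widetilde W\cap C$. This set is open in $C$, is convex as the intersection of two convex sets, contains $k$, and satisfies $W_k\subseteq O\cap C\subseteq U$. The family $\{W_k:k\in K\}$ is then an open cover of the compact set $K$, so there is a finite subcover $W_1,\dots,W_n$. Taking $V=\bigcup_{i=1}^{n}W_i$ gives an open subset of $C$ with $K\subseteq V\subseteq U$.

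It remains to count components. Each $W_i$ is convex, hence connected, so $V$ is a union of $n$ connected sets and therefore has at most $n$ connected components; in particular $V$ has finitely many components, as required. I do not expect a genuine obstacle here: the argument is entirely elementary, and the only point demanding care is the bookkeeping between the ambient topology and the relative topology on $C$, namely arranging convexity of the neighbourhood \emph{before} intersecting with $C$ so that each $W_k$ is genuinely convex and genuinely contained in $U$.
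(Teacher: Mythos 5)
Your proof is correct and follows essentially the same argument as the paper's: cover $K$ by convex open neighbourhoods inside $U$ via local convexity, extract a finite subcover by compactness, and take the union. Your additional care with the relative topology on $C$ and the explicit count of components (each convex piece being connected) only spells out details the paper leaves implicit.
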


\begin{proposition}
If $K$ is an absolute neighbourhood retract in the category of compact Hausdorff spaces then $K$ has only finitely many components.
\begin{proof}
Let $C=P(K)$, the space of probability measures on $K$, with the weak$^*$ topology induced by $C(K)$, which is a locally convex topology under which $C$ is compact as well as certainly being convex. The mapping which takes $k$ to the point mass at $k$ is a homeomorphism of $K$ onto the set of extreme points of $C$. If $K$ is an absolute neighbourhood retract then there is a retraction $\pi:U\to K$ where $U$ is an open subset of $C$ with $K\subseteq U$. By the preceding lemma, there is an open set $V$, with finitely many components, such that $K\subseteq V\subseteq U$. The image of each component of $V$ under $\pi$ is connected and their union is $K$, so that $K$ has only finitely many components.
\end{proof}
\end{proposition}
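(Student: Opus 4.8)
The plan is to realise $K$ as a closed subset of a compact convex set lying inside a locally convex space, so that the absolute neighbourhood retract hypothesis supplies a retraction whose domain can then be shrunk, by the preceding lemma, to an open set with only finitely many components. The decisive choice is the ambient space, and I would take $C=P(K)$, the set of regular Borel probability measures on $K$ with the weak$^*$ topology induced by $C(K)$. This $C$ is convex, and since it is a weak$^*$-closed subset of the unit ball of $C(K)^*$ it is weak$^*$-compact; the weak$^*$ topology is locally convex. The point of this choice is that $C$ plays two roles at once: it is a compact Hausdorff space, inside which the definition of an absolute neighbourhood retract may be invoked for $K$, and it is a compact convex subset of a locally convex space, to which the preceding lemma applies verbatim.

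Next I would embed $K$ into $C$ as a closed subset. The map sending each point of $K$ to the corresponding point mass is continuous and injective; since $K$ is compact and $C$ is Hausdorff, it is a homeomorphism onto its image, which is consequently compact and hence closed in $C$. (This image is in fact precisely the extreme boundary of $C$, but all that matters is that it is a closed homeomorphic copy of $K$ in $C$.) Identifying $K$ with this copy, the hypothesis now provides an open set $U$ in $C$ with $K\subseteq U$ and a continuous retraction $\pi:U\to K$.

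The preceding lemma, applied to $C$, $K$ and $U$, then yields an open set $V$ with $K\subseteq V\subseteq U$ having only finitely many connected components, say $V=\bigcup_{i=1}^{n}V_i$. Each $\pi(V_i)$ is connected, being a continuous image of a connected set; and because $\pi$ fixes $K$ pointwise while $K\subseteq V\subseteq U$, we have $K=\pi(K)\subseteq\pi(V)\subseteq K$, so that $K=\bigcup_{i=1}^{n}\pi(V_i)$ is a finite union of connected sets. Finally, a space that is a union of $n$ connected subsets has at most $n$ components: any point of a component lies in some $\pi(V_i)$, and that whole piece, being connected, is then contained in the component, so each component contains at least one of the $\pi(V_i)$; as distinct components are disjoint, there can be at most $n$ of them. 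Hence $K$ has only finitely many components.

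The only genuine obstacle, I expect, is the first step: recognising that $P(K)$ is exactly the right ambient space because it carries both the compact Hausdorff structure needed to apply the ANR hypothesis and the compact convex structure in a locally convex space needed to apply the shrinking lemma. Once that is in place, the point-mass embedding is routine, the lemma is quoted, and the remaining count of components is an elementary point-set argument.
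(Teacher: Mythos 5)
Your proposal is correct and follows exactly the paper's own argument: embedding $K$ via point masses into the compact convex set $P(K)$ with the weak$^*$ topology, applying the ANR hypothesis to obtain the retraction, shrinking the open set by the preceding lemma, and covering $K$ by the finitely many connected images of the components. The additional details you supply (weak$^*$-compactness of $P(K)$, the homeomorphism onto a closed subset, and the count of components at the end) are just elaborations of steps the paper leaves implicit.
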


Thus if $C(K)$ is a projective Banach lattice under any norm then $K$ has only finitely many components. In particular:

\begin{corollary}The sequence space $\ell_\infty$ is not a projective Banach lattice.
\end{corollary}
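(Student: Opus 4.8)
The plan is to realize $\ell_\infty$ as a $C(K)$ space and then invoke the displayed observation immediately preceding this corollary, namely that if $C(K)$ is a projective Banach lattice under any norm then $K$ has only finitely many components. First I would identify $\ell_\infty$ isometrically and order isomorphically with $C(\beta\Natural)$, where $\beta\Natural$ denotes the Stone--\v{C}ech compactification of the countable discrete space $\Natural$. This is the standard Gelfand representation: $\ell_\infty$ is a commutative unital $C^*$-algebra (equivalently, an AM-space with unit $\1$), and every bounded real sequence extends uniquely to a continuous real-valued function on $\beta\Natural$, so that $\ell_\infty\cong C(\beta\Natural)$ with the supremum norm.

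Next I would verify that $\beta\Natural$ has infinitely many components. Since $\Natural$ carries the discrete topology, each $n\in\Natural$ remains an isolated point of $\beta\Natural$, so the singleton $\{n\}$ is clopen and is therefore a component of $\beta\Natural$ for every $n\in\Natural$. Consequently $\beta\Natural$ has at least countably many distinct components; indeed it is totally disconnected, but the mere fact that it has infinitely many components is all that is needed here.

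Finally I would combine these two observations with the quoted result: were $\ell_\infty=C(\beta\Natural)$ projective under any (necessarily equivalent) Banach lattice norm, the preceding proposition would force $\beta\Natural$ to have only finitely many components, contradicting the previous paragraph. Hence $\ell_\infty$ is not a projective Banach lattice. There is no real obstacle in this argument, as all of the substantive work has already been carried out in establishing that projectivity of $C(K)$ constrains $K$ to have finitely many components; the only points requiring care are the identification $\ell_\infty\cong C(\beta\Natural)$ and the elementary count of its components.
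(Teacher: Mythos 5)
Your proof is correct and is precisely the argument the paper intends: the corollary is stated as an immediate consequence of the observation that projectivity of $C(K)$ under any norm forces $K$ to have finitely many components, applied to $\ell_\infty\cong C(\beta\Natural)$, whose isolated points $n\in\Natural$ give infinitely many clopen singleton components. Your two points of care (the identification with $C(\beta\Natural)$ and the component count) are exactly the right ones, and both are handled correctly.
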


In \cite{Ba} Baker characterized  projective vector lattices with
$n$ generators as being quotients of $FVL(n)$ by a principal ideal.
If we embed $K_0$ into one of the faces of $F_2$ then we know that
$c$ is isometrically order isomorphic to $FBL(2)/J^{K_0}$. It is
clear that $J^{K_0}$ is a principal closed ideal of $FBL(2)$ and
that $c$ has two generators as a Banach lattice, so the natural
analogue of Baker's result fails in the Banach lattice setting.

The obvious candidate for a projective Banach lattice, as in the
Banach space case, is $\ell_1(I)$ for an arbitrary index set $I$, however Corollary \ref{projdisjt} tells us that if $I$ is an uncountable index set then
$\ell_1(I)$ is definitely \emph{not} a projective Banach lattice. Similarly $\ell_p(I)$ ($1\le p<\infty$) and
$c_0(I)$ are not projective if $I$ is uncountable.

Given that we can lift disjoint sequences it is not difficult to
show that $\ell_1$ is projective. In fact we can show much more.

\begin{theorem}\label{ellonesum}If, for each $n\in\Natural$, $P_n$ is a projective
Banach lattice with a topological order unit then the countable sum $\ell_1(P_n)$,
 under the coordinate-wise order and normed by $\|(p_n)\|_1=
\sum_{n=1}^\infty\|p_n\|$,
is a projective Banach lattice.
\begin{proof}
Let $e_n$ be a topological order unit for $P_n$. We will identify
$P_n$ with the subspace of $\ell_1(P_n)$ in which all entries
apart from the $n$'th are zero and $e_n$ with the corresponding
member of that subspace so that the $e_n$ are all disjoint. If $X$
is a Banach lattice, $J$ a closed ideal in $X$, $Q:X\to X/J$ the
quotient map, $T:\oplus_1(P_n)\to X/J$ a lattice homomorphism and
$\epsilon>0$ then we start by noting that the $Te_n$ are disjoint,
so by Theorem \ref{countablelifting} we can find disjoint $u_n$ in
$X_+$ with $Qu_n=Te_n$. If we write $X_n$ for the closed ideal in
$X$ generated by $u_n$ then the family $(X_n)$ is disjoint in $X$.

Note that the natural embedding of $X_n/(J\cap X_n)$ into $X/J$ is
an isometry onto an ideal and that $T(P_n)\subset X_n/(J\cap X_n)$ as $e_n$ is a topological order unit for $P_n$ and
$T$ is a lattice homomorphism. The projectivity of $P_n$ allows us to
lift $T_n$ to a lattice homomorphism $\hat{T_n}:P_n\to X_n$ with
$\|\hat{T_n}\|\le \|T_n\|+\epsilon\le \|T\|+\epsilon$ with $Q\circ
\hat{T_n}=T_n$. Piecing together this sequence of operators in the
obvious way will give us the desired lifting of $T$.
\end{proof}
\end{theorem}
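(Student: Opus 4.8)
The plan is to exploit the $\ell_1$-direct sum structure to reduce the lifting problem to the individual summands, solve it on each $P_n$ by projectivity, and then reassemble the pieces, with disjointness doing the work of guaranteeing that the reassembled map is again a lattice homomorphism. Fix a Banach lattice $X$, a closed ideal $J$, the quotient map $Q:X\to X/J$, a lattice homomorphism $T:\ell_1(P_n)\to X/J$ and $\epsilon>0$. Identify each $P_n$ with its canonical (isometric) copy inside $\ell_1(P_n)$, so that the topological order units $e_n$ become pairwise disjoint; consequently the images $Te_n$ form a disjoint sequence in $(X/J)_+$. First I would apply Theorem \ref{countablelifting} to lift this disjoint sequence to a disjoint sequence $(u_n)$ in $X_+$ with $Qu_n=Te_n$, and let $X_n$ denote the closed ideal of $X$ generated by $u_n$; since the $u_n$ are disjoint, the ideals $X_n$ are mutually disjoint.

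The crux is to confine each restriction $T_n=T|_{P_n}$ to the ideal generated by $Te_n$. Because $e_n$ is a topological order unit for $P_n$, every element of $P_n$ lies in the closed ideal generated by $e_n$; applying the continuous lattice homomorphism $T_n$, every $T_np$ lies in the closed ideal of $X/J$ generated by $Te_n=Qu_n$. This closed ideal is exactly $Q(X_n)$, and the natural embedding of $X_n/(J\cap X_n)$ into $X/J$ is an isometry onto that ideal (a routine fact for quotients by closed ideals). Regarding $T_n$ as a lattice homomorphism $P_n\to X_n/(J\cap X_n)$ of norm at most $\|T\|$ (the embedding $P_n\hookrightarrow\ell_1(P_n)$ being isometric), the projectivity of $P_n$ produces a lattice homomorphism $\hat T_n:P_n\to X_n$ with $Q\circ\hat T_n=T_n$ and $\|\hat T_n\|\le(1+\epsilon)\|T\|$. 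The decisive gain is that $\hat T_n$ takes values in $X_n$, so the ranges of the $\hat T_n$ are pairwise disjoint.

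Finally I would reassemble: for $p=(p_n)\in\ell_1(P_n)$ set $\hat Tp=\sum_n\hat T_n(p_n)$, a series that converges absolutely since $\|\hat T_n(p_n)\|\le(1+\epsilon)\|T\|\,\|p_n\|$ and $\sum_n\|p_n\|<\infty$. The triangle inequality gives $\|\hat T\|\le(1+\epsilon)\|T\|$ directly, while $Q\circ\hat T=T$ follows from the continuity of $Q$ together with $Q\hat T_n=T_n=T|_{P_n}$ and the continuity of $T$. That $\hat T$ is a lattice homomorphism is where disjointness is used: writing $|p|=(|p_n|)$, the elements $\hat T_n(p_n)$ lie in the mutually disjoint ideals $X_n$, so $|\hat Tp|=\bigl|\sum_n\hat T_n(p_n)\bigr|=\sum_n|\hat T_n(p_n)|=\sum_n\hat T_n(|p_n|)=\hat T|p|$, each $\hat T_n$ being itself a lattice homomorphism.

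I expect the main obstacle to be the confinement step of the second paragraph: it is precisely here that the hypothesis of a topological order unit is indispensable, for without it $T_n(P_n)$ need not sit inside a single $X_n/(J\cap X_n)$ and the lift could escape $X_n$, destroying the disjointness on which the reassembly rests. One must also verify that $X_n/(J\cap X_n)$ embeds isometrically as an ideal in $X/J$, so that projectivity is applied with the correct control on $\|T_n\|$; everything else is bookkeeping once this localization is in place.
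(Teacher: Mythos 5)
Your proof is correct and follows essentially the same route as the paper's: lift the disjoint topological order units via the countable disjoint-lifting theorem, confine each $T|_{P_n}$ to the ideal $X_n/(J\cap X_n)$ using the order-unit hypothesis, apply projectivity of each $P_n$, and reassemble. The only difference is that you spell out the final ``piecing together'' step (absolute convergence, the norm bound, and the disjointness argument showing $\hat T$ preserves moduli), which the paper leaves implicit.
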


Recall that if $\carda$ is finite or countably infinite then
$FBL(\carda)$ has a topological order unit as do finite dimensional
Banach lattices and $C(K)$-spaces. This gives us a source of
building blocks to create other projectives.

We already have some examples of Banach lattices which are not
projective.  It is interesting to note  that the free Banach
lattices on uncountably many generators seem to be, in some sense at
least, maximal projectives.

\begin{proexample}\label{freemaxproj}If $\carda$ is uncountable then there is no non-zero Banach lattice $X$ for which $X\oplus FBL(\carda)$ is projective under any norm.

\begin{proof}
Suppose that, under some norm, $FBL(\carda)\oplus X$ is projective,
where $X$ is a Banach lattice and $\carda$ is uncountable.

Consider $C(K)$, where $K=[0,\omega]\times[0,\omega_1]$, and (with the
notation of Example \ref{TPex}) $J=\{f\in C(K):f_{|A}\equiv 0\}$ so
that $C(K)/J$ is isometrically order isomorphic to $C(A)$.

For each $v\in V$ there is  $f_v\in C(A)$ with $0\le f_v(a)\le 1$
for all $a\in V$, $f_v(v)=1$ and $f_v$ identically zero on
$A\setminus V$. As $V$ has cardinality $\aleph_1$ there will be a
map of the set of generators $\{\delta_a:a\in \carda\}$ of
$FBL(\carda)$ onto $\{ f_v:v\in V\}$, which extends to a lattice
homomorphism of $FBL(\carda)$ into $C(A)$. The image of every
generator vanishes on $U$, hence the same is true for elements of
$T\big(FVL(\carda)\big)$ and, by continuity, for elements of
$T\big(FBL(\carda)\big)$. Note that $\bigcup_{f\in
FBL(\carda)}\{a\in A:f(a)\ne 0\}=V$.

As $U$ is an $F_\sigma$ there is $g\in C(A)$ with $g(u)>0$ for all
$u\in U$ and with $g$ identically zero on $A\setminus U$. If $X\oplus FBL(\carda)$ were projective and $x_0\in X_+\setminus\{0\}$ there would be a real-valued lattice homomorphism on $X\oplus FBL(\carda)$ with $\phi(x_0)>0$ (and necessarily $\phi_{|FBL(\carda)}\equiv 0$.) Define
$Sx=\phi(x)g$ for $x\in X$ so that $S$ is a lattice homomorphism of
$X$ into $C(A)$. The disjointness of the images of $S(X)$ and
$T\big(FBL(\carda)\big)$ shows that the direct sum operator $S\oplus
T:X\oplus FBL(\carda)\to C(A)=C(K)/J$ is also a lattice
homomorphism. If $X\oplus FBL(\carda)$ were projective we could find
a lattice homomorphism $\hat{S}\oplus \hat{T}:X\oplus FBL(\carda)\to
C(K)$ with $Q\circ (\hat{S}\oplus \hat{T})=S\oplus T$. The images of
$X\oplus\{0\}$ and $\{0\}\oplus FBL(\carda)$ will be disjoint in
$C(K)$ and their open supports will give disjoint open sets with
traces on $A$ equal to $U$ and $V$ respectively, which we know is
impossible.
\end{proof}
\end{proexample}

The family of projective Banach lattices seems to possess very few stability properties beyond those that we have already noted. In particular, 
closed sublattices of projectives need not be projective as the non-projective $c$ may be isometrically embedded as a closed sublattice of the 
projective Banach lattice $C([0,1])$, by mapping the sequence $(a_n)$ to the function that is linear on each interval $[1/(n+1),1/n]$ and takes the 
value $a_n$ at $1/n$. Similarly, we may realize $c$ as the quotient of $C([0,1])$ by the closed ideal $\{f\in C([0,1]):f(1/n)=0\forall n\in\Natural\}$, 
showing that the class of projective Banach lattices is not closed under quotients.

\section{Some Open Problems.}\label{problems}

We start with a few questions on free Banach lattices.

\begin{question}Must the norm on a free Banach lattice be
Fatou, or even Nakano? See \cite{Wi1} for the definition of a Nakano
norm.  We are not sure of the answer even when there are only
finitely many generators.
\end{question}

The following question is rather a long shot as we have very little evidence for it beyond the case of a finite number of generators (see below).

\begin{question}
If the free Banach lattice $FBL(\carda)$ is embedded as a closed ideal in a Banach lattice must it be a projection band?
\end{question}

The reason that this holds in the case of a finite number of generators is because this (isomorphic) property of Banach lattices is possessed by Banach lattices with a strong order unit. The following is undoubtedly well-known but we know of no convenient reference for it.

\begin{proposition}Let $Y$ be a Banach lattice with the property that every upward directed norm bounded subset of $Y_+$ is bounded above. If $Y$ is embedded as a closed ideal in a Banach lattice $X$ then it must be a projection band.
\begin{proof}
It suffices to prove that if $x\in X_+$ then the set $B=\{y\in Y:0\le y\le x\}$ has a supremum in $Y$. As $B$ is upward directed and norm bounded, it has an upper bound $u\in Y_+$. As $u\wedge x\in Y_+$, since $Y$ is an ideal, $u\wedge x$ is an upper bound for $B$ in $Y$. As we also have $0\le u\wedge x\le x$, $u\wedge x\in B$ so it is actually the maximum element of $B$.
\end{proof}
\end{proposition}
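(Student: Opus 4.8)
The plan is to reduce the statement to the classical order-theoretic characterization of projection bands: an ideal $Y$ in a vector lattice $X$ is a projection band exactly when, for every $x\in X_+$, the truncated set $B=\{y\in Y:0\le y\le x\}$ has a supremum which exists in $X$ and lies in $Y$; the band projection onto $Y$ is then given on the positive cone by $x\mapsto\sup B$, and $Y^d$ is the complementary band. So it suffices to produce, for each fixed $x\in X_+$, the supremum of $B$ inside $Y$, and the entire problem becomes the construction of that supremum.

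First I would record the two elementary properties of $B$ that allow the hypothesis on $Y$ to be brought to bear. The set $B$ is upward directed: since an ideal is in particular a sublattice, $y_1\vee y_2\in Y$ whenever $y_1,y_2\in B$, and $0\le y_1\vee y_2\le x$, so $y_1\vee y_2\in B$ and dominates both. The set $B$ is also norm bounded, because the norm of $X$ is a lattice norm and hence $0\le y\le x$ forces $\|y\|\le\|x\|$. Thus $B$ is an upward directed, norm bounded subset of $Y_+$, and the defining hypothesis on $Y$ yields an upper bound $u\in Y_+$ for $B$ (replacing $u$ by $u\vee 0$ if necessary to keep it positive).

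The element $u$ need not lie below $x$, so it is not yet a candidate for the supremum; the key manoeuvre is to truncate it and form $u\wedge x$. Because $Y$ is an ideal and $0\le u\wedge x\le u\in Y$, we get $u\wedge x\in Y$. For every $y\in B$ we have both $y\le u$ and $y\le x$, hence $y\le u\wedge x$, so $u\wedge x$ is an upper bound for $B$ in $Y$. But also $0\le u\wedge x\le x$, which places $u\wedge x$ inside $B$ itself. An element of a set that dominates the whole set is its greatest element, hence its supremum; therefore $u\wedge x=\max B=\sup_X B\in Y$, which is precisely the condition demanded by the characterization.

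The content of the argument sits in two places. The first is the appeal to the projection-band characterization, which is classical and converts the band question into the existence of these order suprema. The second, and the only genuinely delicate step, is the truncation $u\mapsto u\wedge x$: the hypothesis supplies merely an upper bound rather than a least one, and it is exactly intersecting with $x$ that both pulls the candidate back into $B$ and, through the ideal property, keeps it inside $Y$, thereby upgrading a bare upper bound into an attained maximum. I do not anticipate any serious obstacle beyond checking that the supremum so produced is genuinely taken in $X$ and genuinely lies in $Y$, and both of these are immediate once $u\wedge x$ is recognised as the maximum of $B$.
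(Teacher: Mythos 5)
Your proposal is correct and is essentially the paper's own proof: both arguments reduce the claim to showing that $B=\{y\in Y:0\le y\le x\}$ has a supremum in $Y$ for each $x\in X_+$, invoke the hypothesis to obtain an upper bound $u\in Y_+$, and then observe that the truncation $u\wedge x$ lies in $Y$, is an upper bound for $B$, and belongs to $B$, hence is its maximum. The only difference is that you spell out the routine verifications (directedness of $B$, norm boundedness via the lattice norm, and maximum implies supremum in $X$) that the paper leaves implicit.
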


We have seen that, unless $|A|=1$, $FBL(A)^*$ is not an injective Banach lattice. However, in the case of \emph{finite} $A$, $FBL(A)^*$ is isomorphic to an AL-space and therefore to an injective Banach lattice.  We suspect that the following question might lead to another characterization of finitely generated free Banach lattices.

\begin{question}
When is $FBL(\carda)^*$ isomorphic to an injective Banach lattice?
\end{question}

In Theorem \ref{densitycharacter} we showed that the density character of $FBL(A)$ was equal to the cardinality of $A$ and related this to the density character or order intervals in $FBL(A)$. This is something of importance in the study of regular operators between Banach lattices, so an answer to the following question would have implications in that field.

\begin{question}Does every order interval in $FBL(A)$ have the same density character?
\end{question}

In the light of Theorem \ref{densitycharacter} that density character would have to be the cardinality of $A$.

\begin{question}Investigate the structure of the symmetric free norm
on $FBL(n)$.
\end{question}

\begin{question}Can the construction of a free Banach lattice be
generalized to give a free Banach lattice over a metric space? Here
a metric space $S$ embeds in a ``free'' Banach lattice in some sense
and any isometry of the generators into a Banach lattice extends to
a lattice homomorphism with some restriction on the norm. See
\cite{Pe}  for the Banach space case.
\end{question}

We have seen in Corollary \ref{ellinftyindual} that $FBL(\carda)^*$ contains a disjoint family of cardinality $\carda$ which contrasts strongly with the fact that disjoint families in $FBL(A)$ itself can only be at most countably infinite.

\begin{question}
How large can disjoint families of non-zero elements in $FBL(\carda)^*$ be?
\end{question}

 At present we
have no feel at all for what kinds of Banach lattice are likely to
be projective. Clearly, there are a lot of ``small'' ones, where
small means either separable or having a topological order unit. A
major and obvious question to pose is:

\begin{question}Determine the structure of the class of projective
Banach lattices.
\end{question}

In particular,

\begin{question}\label{atomproj}
Are atomic Banach lattices with an order continuous norm projective?
\end{question}

\begin{question}For what compact Hausdorff spaces $K$ is $C(K)$
projective under the supremum norm?
\end{question}

We know the answer to the preceding question for compact subsets of $\Real^n$ by Theorem \ref{fdCKs}.

The following two questions were posed by G. Buskes. An apparently simple question to answer is:

\begin{question}If $P_k$ ($1\le k\le n$) are projective Banach lattices with topological order units then is their $\ell_\infty$ sum also projective?
\end{question}

It is not difficult to lift a lattice homomorphism $T:\oplus_{k=1}^n P_k\to Y/J$ to a lattice homomorphism $\hat{T}:\oplus_{k=1}^n P_k\to Y$ by lifting the images of the topological order units first. The problem seems to be the norm condition on $\hat{T}$.

It is clear that the Fremlin tensor product, see \cite{Fr},  of two projective Banach lattices need not be projective in general. Example \ref{freemaxproj} shows that this cannot be true for the product of $\ell_1$ and $FBL(\carda)$ when $\carda$ is uncountable. There seems no good structural reason to expect a positive result to the next question, but a counterexample has eluded us so far.

\begin{question}If $X$ and $Y$ are projective Banach lattices with topological order units, is their Fremlin tensor product projective?
\end{question}

The building blocks that we can use in Theorem \ref{ellonesum} to
build new projectives include finite dimensional spaces,
$FBL(\carda)$ for $\carda$ either finite or countably infinite and
certain $C(K)$-spaces. Any of these, and the space that is produced
by that theorem, will be separable and hence will have a topological
order unit. Some (possibly rather rash) conjectures that we might
make are:

\begin{conjecture}If a projective Banach lattice has a topological
order unit then it is separable.
\end{conjecture}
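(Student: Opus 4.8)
The plan is to reduce the conjecture to a statement about the free-lattice representation furnished by Theorem \ref{projs} and then to locate a countable set of coordinates that already determines the whole space. First I would fix $\epsilon>0$ and use Theorem \ref{projs}(ii) to realise $P$, via a $(1+\epsilon)$-lattice-isomorphism $\I\colon H\to P$, as a closed sublattice $H$ of some $FBL(\carda)$ that is the range of a contractive lattice-homomorphism projection $R\colon FBL(\carda)\to H$. Transporting the topological order unit $e$ of $P$ back across $\I$ produces a quasi-interior point $h=\I^{-1}e$ of $H$. Since $FVL(\carda)$ is dense in $FBL(\carda)$ and every element of $FVL(\carda)$ is supported on finitely many generators (Proposition \ref{freefinite}), $h$ is a norm limit of elements each living in some finitely generated free sublattice; collecting the finitely many indices occurring at each stage yields a countable set $B_0\subseteq A$ with $h\in FBL(B_0)$.

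Working inside the representation $FBL(\carda)\subseteq H(\Delta_A)$ of Corollary \ref{fnrepn}, the membership $h\in FBL(B_0)$ means, by Proposition \ref{BProp02}, that $h(\xi)$ depends only on the restriction $\xi_{B_0}$. Because $h$ is quasi-interior in $H$ we have $|x|\wedge nh\to|x|$ in norm for every $x\in H$, so each such $x$ must vanish wherever $h$ does; hence the open support of $H$ is contained in the cylinder $O_h=\{\xi:\xi_{B_0}\in O_h^{B_0}\}$ over a subset of $\Delta_{B_0}$. The goal of this step is to upgrade this to the much stronger assertion that only countably many coordinates are \emph{essential} for $H$, that is, that there is a countable set $B_1\supseteq B_0$ with $H\subseteq FBL(B_1)$. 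Granting this, $FBL(B_1)$ is separable by the characterisation of countably generated free Banach lattices, so $H$, and therefore $P$ (isomorphic to $H$ via $\I$), is separable; note that a single admissible value of $\epsilon$ already delivers separability, so no limiting argument in $\epsilon$ is needed.

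The hard part is precisely this reduction to countably many essential coordinates, and it is here that I would try to bring in Corollary \ref{projdisjt}, the fact that every disjoint system in a projective Banach lattice is at most countable. The difficulty is that the generators $\delta_a$ are very far from disjoint — any two agree on the face of $\Delta_A$ where both coordinates equal $1$ — so one cannot read off an uncountable disjoint family directly from uncountably many essential coordinates. One would instead have to manufacture, from a supposed uncountable set of essential coordinates $a\notin B_0$, genuinely \emph{disjoint} elements $y_a\in H$, presumably by combining the images $R\delta_a$ with a suitable localisation over the base $O_h^{B_0}$, and then contradict Corollary \ref{projdisjt}. I expect this construction, rather than the surrounding reductions, to be the real obstacle: it is conceivable that the projection $R$ genuinely couples infinitely many coordinates in a manner that no disjointness argument can control, in which case the conjecture would be false and a counterexample should be sought among lattice-complemented closed sublattices of $FBL(\carda)$ for uncountable $\carda$.
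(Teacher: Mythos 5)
There is a genuine gap here, and in fact something more basic to point out first: the statement you were asked to prove is not a theorem of the paper at all. It appears in \S\ref{problems} as one of the authors' explicitly labelled (``possibly rather rash'') conjectures, so the paper contains no proof to compare yours against, and the question of whether a projective Banach lattice with a topological order unit must be separable is left open. Your proposal, by your own admission, does not close it either: everything up to and including the localisation of $h=\I^{-1}e$ in a countably generated sublattice $FBL(B_0)$ is correct but routine (density of $FVL(A)$ plus Proposition \ref{freefinite}), and the entire content of the conjecture is concentrated in the step you label ``the hard part'' --- producing a countable $B_1\supseteq B_0$ with $H\subseteq FBL(B_1)$. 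That step is asserted as a goal, not proved, so what you have written is a reduction of the conjecture to an equivalent (and essentially equally difficult) statement, not a proof.

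It is worth being precise about why this step is the real obstruction. From quasi-interiority of $h$ you correctly get that every $x\in H$ vanishes on the zero set of $h$, i.e.\ off a cylinder determined by the coordinates in $B_0$. But by Proposition \ref{BProp02}, membership of $x$ in $FBL(B_1)$ requires $x(\xi)=x(\eta)$ whenever $\xi_{B_1}=\eta_{B_1}$, which is far stronger than vanishing off a cylinder: a function can be supported inside the cylinder over $\Delta_{B_0}$ and still separate points of $\Delta_A$ that agree on every countable set of coordinates. Nothing in the data you have --- the contractive projection $R$, the isomorphism $\I$, or Corollary \ref{projdisjt} --- obviously rules this out: $R$ is only a lattice homomorphism, the images $R\delta_a$ for $a\notin B_0$ need not be pairwise disjoint (the generators themselves are not), and the countability of disjoint systems in $H$ gives no direct bound on the number of coordinates on which elements of $H$ can depend. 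So your proposed route via manufacturing an uncountable disjoint family in $H$ from uncountably many ``essential'' coordinates is exactly where a new idea would be needed; absent that construction (or a counterexample among lattice-complemented sublattices of $FBL(\carda)$, $\carda$ uncountable, as you suggest), the problem remains in the same open state in which the paper leaves it.
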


\begin{conjecture}A projective Banach lattice which does not have a
topological order unit must be free.
\end{conjecture}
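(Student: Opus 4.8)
The plan is to start from the internal characterization of projectivity. By Theorem \ref{projs}, for each $\epsilon>0$ the projective lattice $P$ is realized, up to a $(1+\epsilon)$-isomorphism $\I$, as a closed sublattice $H$ of a free Banach lattice $F=FBL(\carda)$ carrying a contractive lattice homomorphism projection $R:F\to H$. My first move is to pin down $\carda$. If $\carda$ were finite or countable then $F$ would be separable, forcing $H$, and hence $P$, to be separable; but a separable Banach lattice always possesses a topological order unit, contradicting the hypothesis. So every free cover of $P$ must have uncountably many generators, and in particular $P$ is non-separable.

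The substance of the argument is then to upgrade ``lattice-complemented closed sublattice of $FBL(\carda)$'' to ``free''. I would try to manufacture free generators for $P$ out of the generators of $F$. The images $\{R\delta_a:a\in\carda\}$ generate $H$ as a closed sublattice and are norm-bounded, so by the universal property they determine a contractive lattice homomorphism from a free Banach lattice \emph{onto} $H$; the task is to select an index subset on which this map is bounded below, turning it into an isomorphism. Here the absence of a topological order unit is the crucial leverage: were every such family to ``collapse'' onto a countable, essentially generating subfamily, $P$ would acquire a quasi-interior point. One would aim to convert this heuristic into the statement that $P$ contains a complemented free sublattice of full (uncountable) density whose complement in $P$ is forced to vanish.

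That last forcing step is where I expect to meet the real obstacle, and it is precisely why the statement is only conjectural. Example \ref{freemaxproj} shows that a projective Banach lattice cannot split as $X\oplus FBL(\carda)$ with $X\neq 0$ and $\carda$ uncountable, which eliminates the most obvious non-free structure and is strong evidence for the conjecture; but it does not show that $R$ may be chosen to be a lattice isomorphism onto all of $F$, nor that $H$ is itself free, and a lattice homomorphism projection need not induce a lattice-theoretic direct sum decomposition of $F$ (its kernel is an ideal but need not be disjoint from the range). We possess no intrinsic invariant separating free Banach lattices from arbitrary complemented sublattices of free ones, and the Banach-lattice failure of Baker's principal-ideal criterion, witnessed by $c\cong FBL(2)/J^{K_0}$, warns that no naive algebraic characterization will suffice. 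A complete proof therefore seems to require a genuinely new structural theorem describing exactly which lattice-complemented sublattices of $FBL(\carda)$ are again free; supplying that theorem is the missing ingredient.
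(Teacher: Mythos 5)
The statement you were asked to prove is not proved in the paper at all: it appears in \S\ref{problems} as an explicit conjecture, the authors' only supporting evidence being Example \ref{freemaxproj}. So there is no proof to match yours against, and your own closing assessment --- that your argument does not close --- is exactly right. The verifiable steps you do take are sound. Since $FBL(A)$ is separable precisely when $A$ is at most countable, and since (as the paper notes) separable Banach lattices always possess topological order units, a projective $P$ lacking a topological order unit is non-separable, so in any realization via Theorem \ref{projs} as a lattice-complemented closed sublattice $H$ of $F=FBL(\carda)$ the cardinal $\carda$ must be uncountable. (One small slip: Theorem \ref{projs} gives $\|R\|\le 1+\epsilon$, not a contractive projection; this is harmless.) Your caveat that $R$ need not induce an order-direct decomposition is also correct and can be sharpened from the paper itself: for $|A|\ge 2$ the only projection bands in $FBL(A)$ are trivial, so $R$ is never a band projection, and its kernel, though a closed ideal, need not be lattice-disjoint from $H$ --- which is precisely why Example \ref{freemaxproj} (ruling out splittings $X\oplus FBL(\carda)$) falls short of the conjecture.

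The genuine gap is the one you name yourself: the second paragraph's heuristic about families ``collapsing'' onto a countable generating subfamily is never converted into mathematics. The one direction that can be made precise is the easy one --- if $P$ were generated as a Banach lattice by countably many elements $x_n$, then $e=\sum_n 2^{-n}|x_n|/\|x_n\|$ would be a topological order unit, so $P$ indeed requires uncountably many generators --- but having uncountably many generators is far weaker than freeness, as your own citation of $c\cong FBL(2)/J^{K_0}$ (the failure of Baker's principal-ideal criterion in the Banach lattice setting) warns. What is missing, and what the paper does not possess either, is a structural theorem identifying which lattice-complemented closed sublattices of $FBL(\carda)$ are again free; absent such a theorem, no argument along these lines can terminate. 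In short: your partial reductions are correct, and your diagnosis of the obstruction coincides with the reason the authors posed this as a conjecture rather than a theorem.
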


Even if this conjecture were to fail, we can ask for an improvement of Example \ref{freemaxproj} by asking:

\begin{conjecture}If $\carda$ is uncountable and a projective Banach lattice $X$ contains a closed ideal isomorphic to $FBL(\carda)$, do we actually have $X=FBL(\carda)$?
\end{conjecture}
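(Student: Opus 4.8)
The plan is to argue by contradiction: suppose $X$ is projective, $I\subseteq X$ is a closed ideal isomorphic to $FBL(\carda)$ with $\carda$ uncountable, and $X\neq I$. The heart of the strategy is to manufacture a disjoint-support situation to which the Tychonoff plank construction of Example~\ref{freemaxproj} can be applied. The crucial reduction is to show that $I$ is in fact a \emph{projection band} in $X$, so that $X=I\oplus I^{d}$ with $I^{d}\neq\{0\}$; granting this, the argument of Example~\ref{freemaxproj} goes through with the external direct sum replaced by this band decomposition.

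Assuming the band decomposition, I would proceed as follows. Take $K=[0,\omega]\times[0,\omega_{1}]$ and, with the notation of Example~\ref{TPex}, put $J_{0}=\{f\in C(K):f_{|A}\equiv0\}$, so that $C(K)/J_{0}$ is order isomorphic to $C(A)$; let $Q\colon C(K)\to C(K)/J_{0}$ be the quotient map. Since $|V|=\aleph_{1}\le\carda$, the free generators of $I\cong FBL(\carda)$ can be mapped onto a family $\{f_{v}:v\in V\}\subset C(A)$ supported on $V$ (as constructed in Example~\ref{freemaxproj}), yielding a lattice homomorphism $T\colon I\to C(A)$ whose image has open support exactly $V$. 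On the complementary band, the fact that the real-valued lattice homomorphisms on a projective Banach lattice separate points furnishes, since $I^{d}\neq\{0\}$, a nonzero real-valued lattice homomorphism $\phi$ on $X$; a routine disjointness computation shows that such a $\phi$ must vanish on one of the two bands, and here it may be taken nonzero on $I^{d}$, whence $\phi_{|I}=0$. Choosing $g\in C(A)$ strictly positive on the $F_{\sigma}$ set $U$ and vanishing off $U$, the map $Sx=\phi(x)\,g$ is a lattice homomorphism with image supported on $U$. Writing $P_{I}$ and $P_{I^{d}}$ for the band projections, I set $\Phi=T\circ P_{I}+S\circ P_{I^{d}}\colon X\to C(A)$. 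As the two summands have ranges supported on the disjoint sets $V$ and $U$, they are pointwise disjoint, and the sum of two lattice homomorphisms with disjoint ranges is again a lattice homomorphism, so $\Phi$ is a lattice homomorphism into $C(K)/J_{0}$.

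Projectivity of $X$ now lifts $\Phi$ to a lattice homomorphism $\hat{\Phi}\colon X\to C(K)$ with $Q\circ\hat{\Phi}=\Phi$. The images $\hat{\Phi}(I)$ and $\hat{\Phi}(I^{d})$ are disjoint in $C(K)$, so their open supports are disjoint open subsets of $K$; since $(\hat{\Phi}f)_{|A}=\Phi f$ for every $f$, these supports meet $A$ in exactly $V$ and $U$ respectively. Deleting the corner point $(\omega,\omega_{1})$ then produces disjoint open subsets of the Tychonoff plank separating the closed sets $U$ and $V$, contradicting its non-normality. This forces $I^{d}=\{0\}$, and hence $X=I=FBL(\carda)$.

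The main obstacle is precisely the reduction step, namely proving that a closed ideal isomorphic to $FBL(\carda)$ inside a (projective) Banach lattice must be a projection band --- this is exactly the open Question raised earlier in this section. The proposition that guarantees this property for closed ideals requires that every norm-bounded upward directed subset of the positive cone be bounded above, and this fails for $FBL(\carda)$ when $\carda\ge2$, since $FBL(\carda)$ is not even Dedekind $\sigma$-complete; moreover the mere fact that a projective $X$ embeds as a lattice-homomorphically complemented closed sublattice of a free Banach lattice (Theorem~\ref{projs}) does not force its closed ideals to be projection bands, as already closed ideals of $FBL(2)\cong C(F_{2})$ need not be. I would attempt to bridge this gap by exploiting the projectivity of $X$ more heavily, or by extracting an order-theoretic obstruction from the uncountability of $\carda$ together with the countability of disjoint systems in $FBL(\carda)$ (Corollary~\ref{ctbledisjt}); but it is here that a complete proof currently eludes us, which is why the statement is recorded only as a conjecture.
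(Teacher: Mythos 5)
You have correctly recognized that this statement is not a theorem of the paper at all: it appears in \S\ref{problems} as an open conjecture, and the paper offers no proof of it. So there is no ``paper proof'' to match, and your proposal cannot be judged correct as a proof --- indeed you say so yourself. Your conditional argument is sound: \emph{granting} that the closed ideal $I\cong FBL(\carda)$ is a projection band with $I^{d}\neq\{0\}$, the adaptation of Example~\ref{freemaxproj} works. The disjointness computation showing that a real-valued lattice homomorphism positive somewhere on $I^{d}_{+}$ must vanish on $I$ is valid (in $\Real$, $a\wedge b=0$ with $a>0$ forces $b=0$), the map $\Phi=T\circ P_{I}+S\circ P_{I^{d}}$ is a lattice homomorphism because its two summands have disjoint ranges, the lift $\hat{\Phi}$ preserves the disjointness of $I$ and $I^{d}$, and the traces of the resulting open supports on $A$ are exactly $V$ and $U$, so the Tychonoff plank contradiction of Example~\ref{TPex} applies. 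This is a faithful internalization of the external direct sum argument of Example~\ref{freemaxproj} to a band decomposition.

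The genuine gap is the one you name: you have no proof that a closed ideal of $X$ isomorphic to $FBL(\carda)$ must be a projection band, and this is precisely the paper's own earlier open Question, so your argument reduces one open problem to another rather than resolving it. Your diagnosis of why the reduction is hard is also accurate: the paper's Proposition on ideals being projection bands requires every norm-bounded upward directed subset of the positive cone to be bounded above, which holds for finitely generated free Banach lattices (via the strong order unit) but fails for $FBL(\carda)$ with $\carda\geq 2$, since these are not even Dedekind $\sigma$-complete; and projectivity of $X$ by itself (Theorem~\ref{projs}) says nothing about its closed ideals being bands. One further caution: even if the band reduction were established, the conjecture as stated asks for $X=FBL(\carda)$ given only that $X$ \emph{contains} such an ideal, and your argument would additionally need $I^{d}\neq\{0\}$ to be equivalent to $X\neq I$, which is automatic for a band decomposition but is exactly what is unavailable without it. In short: the strategy is the natural one, the conditional portion is correct, but the statement remains --- as the paper records it --- an open problem.
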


\begin{question}
The $\ell_1$ sum of a sequence of finite dimensional Banach lattices is a Dedekind complete projective. Are these the only Dedekind $(\sigma$-)complete projectives?
\end{question}

\begin{conjecture}
All order continuous functionals on a projective Banach lattice
determined by its atoms.
\end{conjecture}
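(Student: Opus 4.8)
The plan is to read the statement as asserting that the carrier of every order continuous functional on a projective Banach lattice $P$ is an atomic band; equivalently, that an order continuous functional vanishing on every atom of $P$ must be zero. Since the positive and negative parts of an order continuous functional are again order continuous, it suffices to treat $0\le\phi\in P^*$. Writing $N_\phi=\{x\in P:\phi(|x|)=0\}$ for the null ideal and $C_\phi=N_\phi^{d}$ for the carrier, the hypothesis that $\phi$ kills every atom places every atom in $N_\phi$, so $C_\phi$ is atomless; and because $\phi$ is order continuous and vanishes on the order-dense ideal $N_\phi$ whenever $C_\phi=\{0\}$, one has $\phi=0$ as soon as $C_\phi=\{0\}$. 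Thus the whole assertion reduces to the single claim that the atomless part of a projective lattice carries no nonzero order continuous functional: assuming the contrary, restrict to a nonzero atomless band $B\subseteq C_\phi$ on which $\phi$ is strictly positive and order continuous.

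First I would bring to bear the two structural facts already available. On the one hand, the corollary to Theorem \ref{projs} tells us that the real-valued lattice homomorphisms on $P$ separate its points, and their restrictions to the sublattice $B$ are again lattice homomorphisms; hence for every $0\ne x\in B$ there is a nonzero real-valued lattice homomorphism $\omega$ on $B$ with $\omega(x)\ne0$. On the other hand, by Corollary \ref{projdisjt} disjoint families in $P$, and so in $B$, are at most countable. The target is a contradiction with the existence of the strictly positive order continuous $\phi$ on the atomless $B$. Indeed, the L-seminorm $x\mapsto\phi(|x|)$ has AL-completion an atomless $L_1(\nu)$, into which $B$ maps by a lattice homomorphism with dense range, and an atomless $L_1(\nu)$ admits no nonzero real-valued lattice homomorphism: any such would vanish on one of two disjoint halves of its support and, by iterated halving, would be carried by $\{0\}$ (this is the same phenomenon that makes $L_p([0,1])$ non-projective). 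So if $\omega$ were continuous for the norm $\phi(|\cdot|)$ it would factor through $L_1(\nu)$ and vanish, contradicting $\omega(x)\ne0$.

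The hard part is precisely this last step. Via the identification of $P$ with a complemented sublattice $H\subseteq FBL(\carda)$ of Theorem \ref{projs}, the separating homomorphisms arise as restrictions of the point evaluations $f\mapsto\lambda\,Jf(\xi)$ of Theorem \ref{BThm01}; these are continuous for the free, hence the Banach-lattice, norm but are typically \emph{not} order continuous — on $FBL(\carda)$ with $\carda>1$ there are no nonzero order continuous functionals at all (Theorem \ref{onegen}), yet point evaluations abound, exactly as they do on the atomless, order-continuous-functional-free space $C([0,1])$. Consequently $\omega$ need not be $\phi(|\cdot|)$-continuous, and a naive attempt to force a contradiction fails: the splitting argument produces $u_n\downarrow0$ in $B$ with $\phi(u_n)\to0$ while $\omega(u_n)$ stays bounded below, which is consistent precisely because $\omega$ is order-discontinuous. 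The crux, which we expect to require genuinely new information about projective Banach lattices, is to rule out the coexistence on a single atomless band of a strictly positive order continuous functional and a separating norm-continuous lattice homomorphism; the most promising route is to feed the countability of disjoint families (Corollary \ref{projdisjt}) into a controlled exhaustion of $B$ by $\omega$-carrying pieces of vanishing $\phi$-measure and to turn the resulting order-discontinuity of $\omega$ into a violation of the projection estimate $\|R\|\le 1+\epsilon$ supplied by Theorem \ref{projs}.
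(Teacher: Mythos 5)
This statement is not a theorem of the paper at all: it appears in \S\ref{problems} as an open \emph{conjecture} (one of several the authors describe as ``possibly rather rash''), so there is no proof in the paper against which your attempt can be compared. Your text is also, by its own admission, not a proof. The reduction you carry out is sound: passing to $|\phi|$, noting that an atom $a$ satisfies $|\phi|(a)=|\phi(a)|$, introducing the null ideal $N_\phi$ and carrier $C_\phi=N_\phi^d$, observing that atoms of the band $C_\phi$ are atoms of $P$ (bands are solid) and that bands are regularly embedded so order continuity survives restriction --- all of this correctly reduces the conjecture to the claim that no nonzero positive order continuous functional on a projective Banach lattice can have atomless carrier. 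Your diagnosis of why the naive argument fails is also accurate: the separating real-valued lattice homomorphisms furnished by Theorem~\ref{projs} and Theorem~\ref{BThm01} are norm continuous but typically not order continuous (indeed, by Theorem~\ref{onegen}, $FBL(A)$ with $|A|>1$ has \emph{no} nonzero order continuous functionals while point evaluations abound), so the lattice homomorphism $\omega$ need not factor through the AL-completion of $(B,\phi(|\cdot|))$, and the hoped-for contradiction evaporates.

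The genuine gap is the one you name yourself in the final sentence: ruling out the coexistence, on a single atomless band, of a strictly positive order continuous functional and a separating family of norm-continuous lattice homomorphisms. That is not a technical residue of your argument; it \emph{is} the conjecture, and nothing in the paper (countability of disjoint families, Corollary~\ref{projdisjt}, included) is known to settle it. Moreover, the route you sketch for closing the gap --- turning order-discontinuity of $\omega$ into a violation of the projection estimate $\|R\|\le 1+\epsilon$ of Theorem~\ref{projs} --- has an additional obstacle you should flag: the atomless band $B$ is not known to be projective, since the paper's only heredity result (the corollary to Theorem~\ref{projs}) applies to closed sublattices onto which there is a contractive lattice homomorphism projection, and there is no reason for $B$ to be complemented in $P$ in this way; so Theorem~\ref{projs} cannot simply be applied to $B$ to produce the projection $R$ whose norm you propose to contradict. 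In summary: a correct reformulation and an honest identification of the difficulty, but not a proof --- which is consistent with the statement remaining open in the paper.
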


\begin{question}
Assuming a positive answer to Question \ref{atomproj}, we can
further ask if there is a result similar to Theorem \ref{ellonesum}
for $\ell_p$ sums ($1<p<\infty$) or for $c_0$ sums.
\end{question}

The whole of this paper has been written in an isometric setting.
All of our results may be reproved in an isomorphic setting, where
we replace an (almost) isometric condition on operators with mere
norm boundedness. It is not difficult to see that there will
automatically be uniform bounds to the norms of operators and that
isometrically free (resp. projective) Banach lattices will be
isomorphically free (resp. projective). Isomorphically free Banach
lattices will certainly be isomorphic to isometrically free Banach
lattices. At present it does not seem worth recording such a theory,
unless there is negative answer to the following question.

\begin{question}Is every isomorphically projective Banach lattice
isomorphic to an isometrically projective Banach lattice?
\end{question}

\begin{bibdiv}
\begin{biblist}[\resetbiblist{99}]

\bib{Ba}{article}{
   author={Baker, Kirby A.},
   title={Free vector lattices},
   journal={Canad. J. Math.},
   volume={20},
   date={1968},
   pages={58--66},
   issn={0008-414X},
   review={\MR{0224524 (37 \#123)}},
}
\bib{BM}{article}{
author={Banach, S.},
author={Mazur, S.},
title={Zuhr Theorie der linearen Dimension},
journal={Studia. Math.},
volume={4},
date={1933},
pages={100--112}}

\bib{Bl}{article}{
   author={Bleier, Roger D.},
   title={Free vector lattices},
   journal={Trans. Amer. Math. Soc.},
   volume={176},
   date={1973},
   pages={73--87},
   issn={0002-9947},
   review={\MR{0311541 (47 \#103)}},
}
\bib{Bo}{book}{
   author={Borsuk, Karol},
   title={Theory of retracts},
   series={Monografie Matematyczne, Tom 44},
   publisher={Pa\'nstwowe Wydawnictwo Naukowe, Warsaw},
   date={1967},
   pages={251},
   review={\MR{0216473 (35 \#7306)}},
}
\bib{Co}{article}{
   author={Conrad, Paul F.},
   title={Lifting disjoint sets in vector lattices},
   journal={Canad. J. Math.},
   volume={20},
   date={1968},
   pages={1362--1364},
   issn={0008-414X},
   review={\MR{0232720 (38 \#1043)}},
}

\bib{Di}{book}{
   author={Diestel, Joseph},
   title={Sequences and series in Banach spaces},
   series={Graduate Texts in Mathematics},
   volume={92},
   publisher={Springer-Verlag},
   place={New York},
   date={1984},
   pages={xii+261},
   isbn={0-387-90859-5},
   review={\MR{737004 (85i:46020)}},
}

\bib{DS}{book}{
   author={Dunford, Nelson},
   author={Schwartz, Jacob T.},
   title={Linear operators. Part I},
   series={Wiley Classics Library},
   note={General theory;
   With the assistance of William G. Bade and Robert G. Bartle;
   Reprint of the 1958 original;
   A Wiley-Interscience Publication},
   publisher={John Wiley \& Sons Inc.},
   place={New York},
   date={1988},
   pages={xiv+858},
   isbn={0-471-60848-3},
   review={\MR{1009162 (90g:47001a)}},
}
\bib{Fr}{article}{
   author={Fremlin, D. H.},
   title={Tensor products of Banach lattices},
   journal={Math. Ann.},
   volume={211},
   date={1974},
   pages={87--106},
   issn={0025-5831},
   review={\MR{0367620 (51 \#3862)}},
}
\bib{GJ}{book}{
   author={Gillman, Leonard},
   author={Jerison, Meyer},
   title={Rings of continuous functions},
   series={The University Series in Higher Mathematics},
   publisher={D. Van Nostrand Co., Inc., Princeton, N.J.-Toronto-London-New
   York},
   date={1960},
   pages={ix+300},
   review={\MR{0116199 (22 \#6994)}},
}
\bib{Ha}{article}{
   author={Haydon, Richard},
   title={Injective Banach lattices},
   journal={Math. Z.},
   volume={156},
   date={1977},
   number={1},
   pages={19--47},
   issn={0025-5874},
   review={\MR{0473776 (57 \#13438)}},
}
%\bib{J}{book}{
%   author={Jameson, Graham},
%   title={Ordered linear spaces},
%   series={Lecture Notes in Mathematics, Vol. 141},
%   publisher={Springer-Verlag},
%   place={Berlin},
%   date={1970},
%   pages={xv+194},
%   review={\MR{0438077 (55 \#10996)}},
%}

\bib{Lo}{article}{
   author={Lotz, Heinrich P.},
   title={Extensions and liftings of positive linear mappings on Banach
   lattices},
   journal={Trans. Amer. Math. Soc.},
   volume={211},
   date={1975},
   pages={85--100},
   issn={0002-9947},
   review={\MR{0383141 (52 \#4022)}},
}

\bib{LZ}{book}{
   author={Luxemburg, W. A. J.},
   author={Zaanen, A. C.},
   title={Riesz spaces. Vol. I},
   note={North-Holland Mathematical Library},
   publisher={North-Holland Publishing Co.},
   place={Amsterdam},
   date={1971},
   pages={xi+514},
   review={\MR{0511676 (58 \#23483)}},
}
\bib{MN}{book}{
   author={Meyer-Nieberg, Peter},
   title={Banach lattices},
   series={Universitext},
   publisher={Springer-Verlag},
   place={Berlin},
   date={1991},
   pages={xvi+395},
   isbn={3-540-54201-9},
   review={\MR{1128093 (93f:46025)}},
}

\bib{Mo}{article}{
   author={Moore, L. C., Jr.},
   title={The lifting property in Archimedean Riesz spaces},
   journal={Nederl. Akad. Wetensch. Proc. Ser. A 73=Indag. Math.},
   volume={32},
   date={1970},
   pages={141--150},
   review={\MR{0258707 (41 \#3353)}},
}
\bib{Or}{article}{
author={Orhon, Mehmet}, title={The ideal center of the dual of a
Banach lattice}, journal={Technische Universit\"at Darmstadt
preprint},volume={1182},date={1988}}

\bib{dPa}{article}{
   author={de Pagter, Ben},
   title={Irreducible compact operators},
   journal={Math. Z.},
   volume={192},
   date={1986},
   number={1},
   pages={149--153},
   issn={0025-5874},
   review={\MR{835399 (87d:47052)}},
   doi={10.1007/BF01162028},
}
\bib{Pe}{article}{
   author={Pestov, V. G.},
   title={Free Banach spaces and representations of topological groups},
   language={Russian},
   journal={Funktsional. Anal. i Prilozhen.},
   volume={20},
   date={1986},
   number={1},
   pages={81--82},
   issn={0374-1990},
   review={\MR{831059 (87h:22004)}},
}

\bib{RS}{article}{
   author={Ross, K. A.},
   author={Stone, A. H.},
   title={Products of separable spaces},
   journal={Amer. Math. Monthly},
   volume={71},
   date={1964},
   pages={398--403},
   issn={0002-9890},
   review={\MR{0164314 (29 \#1611)}},
}
\bib{Se}{book}{
   author={Semadeni, Zbigniew},
   title={Banach spaces of continuous functions. Vol. I},
   note={Monografie Matematyczne, Tom 55},
   publisher={PWN---Polish Scientific Publishers},
   place={Warsaw},
   date={1971},
   pages={584 pp. (errata insert)},
   review={\MR{0296671 (45 \#5730)}},
}
	
\bib{TL}{book}{
   author={Taylor, Angus E.},
   author={Lay, David C.},
   title={Introduction to functional analysis},
   edition={2},
   publisher={Robert E. Krieger Publishing Co. Inc.},
   place={Melbourne, FL},
   date={1986},
   pages={xii+467},
   isbn={0-89874-951-4},
   review={\MR{862116 (87k:46001)}},
}
\bib{Sp}{book}{
   author={Spanier, Edwin H.},
   title={Algebraic topology},
   note={Corrected reprint},
   publisher={Springer-Verlag},
   place={New York},
   date={1981},
   pages={xvi+528},
   isbn={0-387-90646-0},
   review={\MR{666554 (83i:55001)}},
}

\bib{To}{article}{
   author={Topping, David M.},
   title={Some homological pathology in vector lattices},
   journal={Canad. J. Math.},
   volume={17},
   date={1965},
   pages={411--428},
   issn={0008-414X},
   review={\MR{0174499 (30 \#4700)}},
}

\bib{We}{article}{
   author={Weinberg, Elliot Carl},
   title={Free lattice-ordered abelian groups},
   journal={Math. Ann.},
   volume={151},
   date={1963},
   pages={187--199},
   issn={0025-5831},
   review={\MR{0153759 (27 \#3720)}},
}
\bib{We2}{article}{
   author={Weinberg, Elliot Carl},
   title={Free lattice-ordered abelian groups. II},
   journal={Math. Ann.},
   volume={159},
   date={1965},
   pages={217--222},
   issn={0025-5831},
   review={\MR{0181668 (31 \#5895)}},
}

\bib{Wi}{article}{
   author={Wickstead, A. W.},
   title={Banach lattices with trivial centre},
   journal={Proc. Roy. Irish Acad. Sect. A},
   volume={88},
   date={1988},
   number={1},
   pages={71--83},
   issn={0035-8975},
   review={\MR{974286 (89m:46030)}},
}
\bib{Wi1}{article}{
   author={Wickstead, A. W.},
   title={An isomorphic version of Nakano's characterisation of $C_0(\Sigma)$},
   journal={Positivity},
   volume={11},
   date={2007},
   number={4},
   pages={609--615},
   issn={1385-1292},
   review={\MR{2346446 (2008g:46032)}},
   doi={10.1007/s11117-007-2078-6},
}

\bib{Wi2}{article}{
   author={Wickstead, A. W.},
   title={Banach lattices with topologically full centre},
   language={English, with English and Russian summaries},
   journal={Vladikavkaz. Mat. Zh.},
   volume={11},
   date={2009},
   number={2},
   pages={50--60},
   issn={1683-3414},
   review={\MR{2529410}},
}

\end{biblist}
\end{bibdiv}

\affiliationone{B. de Pagter\\
Department of Mathematics,\\
Delft University of Technology,\\
Julianalaan 132, 2628 BL Delft,\\
 The Netherlands\\
\email{B.dePagter@tudelft.nl}}
\affiliationtwo{A.W. Wickstead\\
Pure Mathematics Research Centre,\\
Queens University Belfast,\\
Northern Ireland.\\
\email{A.Wickstead@qub.ac.uk}}
\end{document}